\newcommand{\driverOption}{}
  \renewcommand{\driverOption}{pdftex}
  \renewcommand{\driverOption}{dvips}
\newcommand{\hyperrefDriverOption}{}
	\renewcommand{\hyperrefDriverOption}{pdftex}
	\renewcommand{\hyperrefDriverOption}{hypertex}
	\newcommand{\TM}[1]{\marginpar{\parbox{4cm}{{\small {\bf TM:} #1}}}}
	\newcommand{\FW}[1]{\marginpar{\parbox{4cm}{{\small {\bf FW:} #1}}}}
	\newcommand{\TM}[1]{}
	\newcommand{\FW}[1]{}
\newtheorem{theorem}{Theorem}
\newtheorem{lemma}[theorem]{Lemma}
\newtheorem{proposition}[theorem]{Proposition}
\theoremstyle{definition}
\theoremstyle{remark}
\newtheorem{remark}[theorem]{Remark}
\long\def\symbolfootnote[#1]#2{\begingroup
\def\thefootnote{\fnsymbol{footnote}}\footnote[#1]{#2}\endgroup}
\begin{document}

\begin{center}

\LARGE Construction of 2-factors in the middle layer of the discrete cube
\vspace{2mm}

\Large Torsten Mütze \quad Franziska Weber
\vspace{2mm}

\large
  Institute of Theoretical Computer Science \\
  ETH Zürich, 8092 Zürich, Switzerland \\
  {\small\tt muetzet@inf.ethz.ch}, {\small\tt frweber@student.ethz.ch}
\vspace{5mm}

\small

\begin{minipage}{0.8\linewidth}
\textsc{Abstract.}
Define the middle layer graph as the graph whose vertex set consists of all bitstrings of length $2n+1$ that have exactly $n$ or $n+1$ entries equal to 1, with an edge between any two vertices for which the corresponding bitstrings differ in exactly one bit. In this work we present an inductive construction of a large family of 2-factors in the middle layer graph for all $n\geq 1$. We also investigate how the choice of certain parameters used in the construction affects the number and lengths of the cycles in the resulting 2-factor.
\end{minipage}

\end{center}

\vspace{5mm}


\section{Introduction}

Consider the \emph{$n$-dimensional cube $Q_n$}, the graph with vertex set $\{0,1\}^n$ (the set of all bitstrings of length $n$) and an edge between any two vertices for which the corresponding bitstrings differ in exactly one bit. The cube has been studied extensively, and it is straightforward to exhibit e.g.\ perfect matchings or Hamiltonian cycles in this graph for all $n\geq 1$. The situation gets more involved if we consider subgraphs of $Q_n$, such as the graph induced by all vertices whose bitstrings contain exactly $k$ or $k+1$ entries equal to 1, where $0\leq k\leq n-1$. We denote this graph by $Q_n(k,k+1)$, and refer to it as a \emph{layer of $Q_n$}. The graph $Q_n(k,k+1)$ is clearly bipartite, and a straightforward application of Hall's theorem proves the existence of a matching that saturates all the vertices in the smaller of the two partition classes. However, it takes considerable effort to come up with explicit descriptions of such matchings \cite{MR0319772, MR0389608, MR962224, MR1268348}. The existence of a Hamiltonian path or cycle in the middle layer graph $Q_{2n+1}(n,n+1)$ for all $n\geq 1$ is asserted by the well-known (and still unproven) \emph{middle levels conjecture} (also known as revolving door conjecture).
An even more general conjecture due to Lov{\'a}sz~\cite{MR0263646} asserts that in fact every connected vertex-transitive graph contains a Hamiltonian path. The middle levels conjecture originated probably with Havel~\cite{MR737021} and Buck and Wiedemann~\cite{MR737262}, but has also been attributed to Dejter, Erd{\H{o}}s, Trotter~\cite{MR962224}, and various others.
With the help of a computer it has been verified that $Q_{2n+1}(n,n+1)$ indeed contains a Hamiltonian cycle for $n\leq 19$ \cite{MR2548541, shimada-amano}. It is also known that the middle layer graph contains a cycle that visits a $(1-o(1))$-fraction of all vertices~\cite{MR2046083}. Unfortunately, attempts to obtain a Hamiltonian cycle from the union of two perfect matchings have not been successful so far \cite{MR962223, MR962224}.

\subsection{Our results}

In this work we present an inductive construction of a large family of 2-factors in the middle layer graph $Q_{2n+1}(n,n+1)$ for all $n\geq 1$ (a 2-factor of a graph $G$ is a $2$-regular spanning subgraph, or equivalently, a family of disjoint cycles visiting all the vertices of $G$).
Our construction is parametrized by a sequence of parameters $(\alpha_{2i})_{1\leq i\leq n}$, where $\alpha_{2i}\in\{0,1\}^{i-1}$, and each of the $\prod_{i=1}^n 2^{i-1}=2^{\binom{n}{2}}=2^{\Theta(n^2)}$ different parameter sequences yields a different 2-factor in $Q_{2n+1}(n,n+1)$ (see Theorem~\ref{thm:different-2-factors} below).
\TM{Hier könnte man noch darauf eingehen, dass man durch Anwendung von Automorphismen potentiell noch mehr 2-factors bekommen könnte, aber an der Grössenordnung im Exponenten ändert dieser Faktor $n!$ nichts.}
For comparison, by combining two perfect matchings from the families of matchings described in~\cite{MR962224, MR1268348}, we only obtain at most $\Theta((n\cdot (2n)!)^2)=2^{\Theta(n\log n)}$ different 2-factors.
\TM{Man beachte, dass bei den Matchings die Vervielfachung durch Automorphismen schon gezählt wird.}

By changing the parameter sequence $(\alpha_{2i})_{1\leq i\leq n}$ we may control the number and lengths of the cycles in the resulting 2-factor. We prove that for \emph{any} choice of the parameter sequence the length of all cycles in the resulting 2-factor in $Q_{2n+1}(n,n+1)$ is a multiple of $4n+2$. In particular, the length of a shortest cycle is at least $4n+2$ (see Theorem~\ref{thm:all-alpha-cycles} below).
We also prove that for \emph{one particular} choice of the parameter sequence, the resulting 2-factor in $Q_{2n+1}(n,n+1)$ has $|\cT_{n+1}|$ many cycles, where $\cT_{n+1}$ denotes the set of all plane trees on $n+1$ vertices (we have $(|\cT_{n+1}|)_{n\geq 1}=(1,1,2,3,6,14,34,95,280,854,\ldots)$, see~\cite{plane-tree-seq}). For $n\geq 4$, the length of a shortest cycle in this 2-factor is $2(4n+2)$, the length of a longest cycle is $2n(4n+2)$, and a $(1-o(1))$-fraction of all cycles have length $2n(4n+2)$ (see Theorem~\ref{thm:alpha0-cycles} below).

When aiming for a 2-factor with few cycles (ideally only a single cycle, which would then be a Hamiltonian cycle), the advantage of our construction compared to simply combining two perfect matchings in the middle layer graph, is that the building blocks in our construction are paths, not just single edges. In fact, with the help of a computer we explored a small fraction of the parameter space for all $n\leq 14$ and thus found many Hamiltonian paths and cycles in $Q_{2n+1}(n,n+1)$ for those values of $n$ (see Section~\ref{sec:experiments} below). Those experiments suggest that the family of 2-factors arising from our construction is large enough to prove the middle levels conjecture for many more values of $n$, if not for infinitely many.

\subsection{Organization of this paper}

We begin by describing our construction in Section~\ref{sec:defs-construction}. The proof of a key lemma (Lemma~\ref{lemma:FL-invariant} below) which ensures that the construction works as claimed, is deferred to Section~\ref{sec:correctness}. In Section~\ref{sec:difference-2-factors} we analyze how different 2-factors arising from different parameter sequences are. In Section~\ref{sec:structure-2-factor} we investigate the number and lengths of the cycles in the 2-factors from our construction. In Section~\ref{sec:experiments} we briefly discuss the results of our computer experiments.

\section{Construction of a 2-factor in the middle layer of $Q_{2n+1}$}
\label{sec:defs-construction}

\subsection{Definitions and notation}
\label{sec:notation}

We start by collecting a few basic definitions that will be used throughout the paper.

\textit{Reversing, inverting and concatenating bitstrings.}
For any bitstring $x=(x_1,x_2,\ldots,x_n)$, $x_i\in\{0,1\}$, we define $\rev(x):=(x_n,x_{n-1},\ldots,x_1)$. Furthermore, setting $\ol{0}:=1$, $\ol{1}:=0$, we define $\ol{x}:=(\ol{x_1},\ol{x_2},\ldots,\ol{x_n})$.
For bitstrings $x$ and $y$ we denote by $x\circ y$ the concatenation of $x$ and $y$.
For any bitstring $x$ we define $x^0:=()$ and $x^k:=x\circ x^{k-1}$ for any integer $k\geq 1$. For a set of bitstrings $X$ and a bitstring $y$ we define $X\circ y:=\{x\circ y\mid x\in X\}$. We extend this notion to graphs $G$ whose vertex set is a set of bitstrings: For any bitstring $y$ we let $G\circ y$ denote the graph obtained from $G$ by attaching the bitstring $y$ to every vertex of $G$ (so we have $V(G\circ y)=V(G)\circ y$). Let $\cG$ be a family of graphs, all of whose vertex sets are sets of bitstrings. For any bitstring $y$ we define $\cG\circ y:=\{G\circ y\mid G\in\cG\}$, and for any set of bitstrings $Y$ we define $\cG\circ Y:=\{G\circ y\mid G\in\cG \wedge y\in Y\}$.

\textit{The discrete cube and its layers.}
For any integer $n\geq 1$ we let $B_n:=\{0,1\}^n$ denote the set of all bitstrings of length $n$. 
Recall that we defined the \emph{$n$-dimensional cube $Q_n$} as the graph with vertex set $B_n$ and an edge between any two vertices for which the corresponding bitstrings differ in exactly one bit.
For any integer $0\leq k\leq n$ we let $B_n(k)\seq B_n$ denote the set of all bitstrings of length $n$ with exactly $k$ entries equal to 1 (and the other $n-k$ entries equal to 0).
Recall that we defined the graph $Q_n(k,k+1)$, $0\leq k\leq n-1$, as the subgraph of $Q_n$ induced by the vertex sets $B_n(k)$ and $B_n(k+1)$, and that we refer to $Q_n(k,k+1)$ as a \emph{layer} of $Q_n$. In particular, we will refer to $Q_{2n}(k,k+1)$ for all $k=n,n+1,\ldots,2n-1$ as the \emph{upper layers} of $Q_{2n}$, and to $Q_{2n+1}(n,n+1)$ as the \emph{middle layer} of $Q_{2n+1}$.

\textit{Inductive decomposition of the discrete cube.}
Beside the decomposition of $Q_n$ into layers, there is another important inductive decomposition of this graph. Note that $Q_n$ is obtained by taking two copies of $Q_{n-1}$, attaching a $0$ to all bitstrings in one copy (this yields a copy of the graph $Q_{n-1}\circ(0)$), attaching a $1$ to all bitstrings in the other copy (this yields a copy of the graph $Q_{n-1}\circ(1)$) and connecting corresponding vertices by a perfect matching $M_n$ (so the bitstrings corresponding to the end vertices of every edge of $M_n$ differ exactly in the last bit). Unrolling this inductive construction for another step, $Q_n$ is obtained from copies of $Q_{n-2}\circ(0,0)$, $Q_{n-2}\circ(1,0)$, $Q_{n-2}\circ(0,1)$ and $Q_{n-2}\circ(1,1)$ plus two perfect matchings $M_n$ and $M_n':=M_{n-1}\circ(0)\cup M_{n-1}\circ (1)$. As we shall see, our inductive construction of a 2-factor in the middle layer of $Q_{2n+1}$ is based on this inductive decomposition of $Q_{2n+2}$ into four copies of $Q_{2n}$ plus the two perfect matchings $M_{2n+2}$ and $M_{2n+2}'$ (see Figure~\ref{fig:cube-decomposition}).

\begin{figure}
\centering
\PSforPDF{
 \psfrag{b20}{$B_2(0)$}
 \psfrag{b21}{$B_2(1)$}
 \psfrag{b22}{$B_2(2)$}
 \psfrag{b40}{$B_4(0)$}
 \psfrag{b40}{$B_4(0)$}
 \psfrag{b41}{$B_4(1)$}
 \psfrag{b42}{$B_4(2)$}
 \psfrag{b43}{$B_4(3)$}
 \psfrag{b44}{$B_4(4)$}
 \psfrag{q5}{\Large $Q_2$}
 \psfrag{q6}{\Large $Q_4$}
 \psfrag{q1}{$Q_2\circ(0,0)$}
 \psfrag{q2}{$Q_2\circ(1,0)$}
 \psfrag{q3}{$Q_2\circ(0,1)$}
 \psfrag{q4}{$Q_2\circ(1,1)$}
 \psfrag{m30}{$M_3\circ(0)$}
 \psfrag{m31}{$M_3\circ(1)$}
 \psfrag{m4}{$M_4$}
 \psfrag{m4p}{$M_4'=M_3\circ(0)\cup M_3\circ(1)$}
 
 \psfrag{b2n2n}{\small $B_{2n}(2n)$}
 \psfrag{vdots}{$\vdots$}
 \psfrag{b2nnp1}{\small $B_{2n}(n+1)$}
 \psfrag{b2nn}{\small $B_{2n}(n)$}
 \psfrag{b2nnm1}{\small $B_{2n}(n-1)$}
 \psfrag{b2n0}{\small $B_{2n}(0)$}
 \psfrag{b2np22np2}{\small $B_{2n+2}(2n+2)$}
 \psfrag{b2np2np2}{\small $B_{2n+2}(n+2)$}
 \psfrag{b2np2np1}{\small $B_{2n+2}(n+1)$}
 \psfrag{b2np2n}{\small $B_{2n+2}(n)$}
 \psfrag{b2np20}{\small $B_{2n+2}(0)$}
 \psfrag{q2n}{\Large $Q_{2n}$}
 \psfrag{q2np2}{\Large $Q_{2n+2}$}
 \psfrag{q2n1}{$Q_{2n}\circ(0,0)$}
 \psfrag{q2n2}{$Q_{2n}\circ(1,0)$}
 \psfrag{q2n3}{$Q_{2n}\circ(0,1)$}
 \psfrag{q2n4}{$Q_{2n}\circ(1,1)$}
 \psfrag{m1}{$M_{2n+1}\circ(0)$}
 \psfrag{m2}{$M_{2n+1}\circ(1)$}
 \psfrag{m3}{$M_{2n+2}$}
 \psfrag{m2np2p}{$M_{2n+2}'=M_{2n+1}\circ(0)\cup M_{2n+1}\circ(1)$}
 \includegraphics{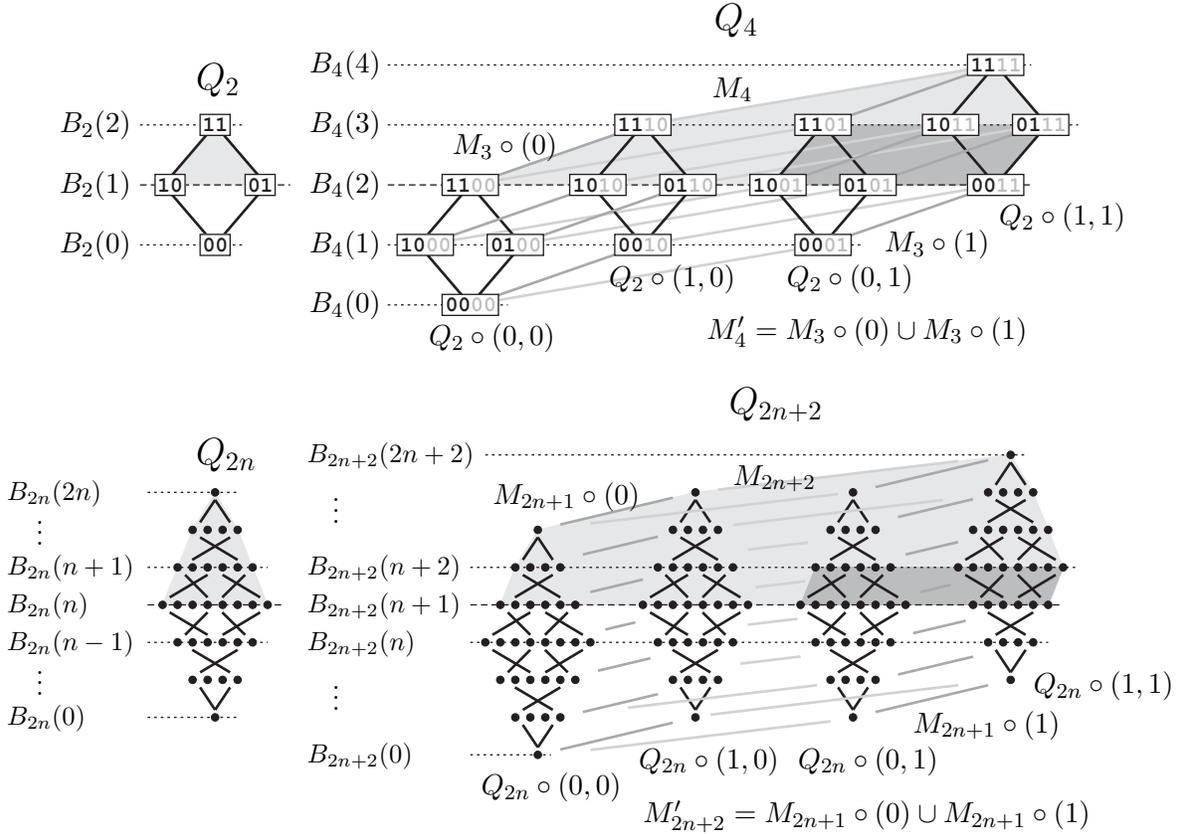}
}
\caption{Decomposition of $Q_{2n+2}$ into four copies of $Q_{2n}$ plus two perfect matchings (the top part shows a concrete example, the lower part a schematic representation of the general structure). The light grey regions show the upper layers of $Q_{2n}$ and $Q_{2n+2}$ and the dark grey region the middle layer of $Q_{2n+1}\circ(1)$.}
\label{fig:cube-decomposition}
\end{figure}

\textit{Oriented paths, dangling paths.}
In our approach we construct certain paths as subgraphs of layers of the cube. The order of vertices along those paths is important for us, i.e., $P=(v_1,v_2,\ldots,v_l)$ is a different \emph{oriented path} than $P'=(v_l,v_{l-1},\ldots,v_1)$.
For an oriented path $P=(v_1,v_2,\ldots,v_l)$ we define $F(P):=v_1$, $S(P):=v_2$ and $L(P):=v_l$, as the first, second and last vertex of $P$, respectively. We extend this notion to a family $\cP$ of oriented paths by setting $F(\cP):=\{F(P)\mid P\in\cP\}$, $S(\cP):=\{S(P)\mid P\in\cP\}$ and $L(\cP):=\{L(P)\mid P\in\cP\}$.

We refer to a path $P$ in $Q_n(k,k+1)$ that starts and ends at a vertex in the set $B_n(k)$ as a \emph{dangling path}. As $Q_n(k,k+1)$ is bipartite, every second vertex of such a path $P$ is contained in the set $B_n(k+1)$ (and $P$ has even length).

\subsection{Our construction}
\label{sec:construction}

The construction is parametrized by some sequence $(\alpha_{2i})_{i\geq 1}$, $\alpha_{2i}\in\{0,1\}^{i-1}$. Given this sequence, we inductively construct a family $\cP_{2n}(k,k+1)$ of disjoint dangling oriented paths in $Q_{2n}(k,k+1)$ for all $n\geq 1$ and all $k=n,n+1,\ldots,2n-1$ such that the following conditions hold:
\begin{enumerate}[(i)]
\item The paths in $\cP_{2n}(n,n+1)$ visit all vertices in the sets $B_{2n}(n+1)$ and $B_{2n}(n)$.
\item For $k=n+1,\ldots,2n-1$, the paths in $\cP_{2n}(k,k+1)$ visit all vertices in the set $B_{2n}(k+1)$, and the only vertices not visited in the set $B_{2n}(k)$ are exactly the elements in the set $S(\cP_{2n}(k-1,k))$.
\end{enumerate}
For simplicity we do not make the dependence of the families $\cP_{2n}(k,k+1)$ from the parameters $\alpha_2,\alpha_4,\ldots$ explicit, but we will discuss those dependencies in detail in Section~\ref{sec:dependence-alpha} below.

\textbf{Induction basis $n=1$ ($Q_2$):}
For the induction basis we define
\begin{equation} \label{eq:ind-base-P}
  \cP_{2}(1,2):=\{((1,0),(1,1),(0,1))\}
\end{equation}
(the family $\cP_{2}(1,2)$ consists only of a single oriented path on three vertices). It is easily checked that this family of paths in \emph{the} upper layer of $Q_2$ satisfies the conditions~(i) and (ii) (condition~(ii) is satisfied trivially).

\textbf{Induction step $n\rightarrow n+1$ ($Q_{2n}\rightarrow Q_{2n+2}$), $n\geq 1$:}
The inductive construction consists of two intermediate steps. For the reader's convenience those steps are illustrated in Figure~\ref{fig:construction}.

\textit{First intermediate step: Construction of a 2-factor in the middle layer of $Q_{2n+1}$.}
Using only the paths in the family $\cP_{2n}(n,n+1)$ and the parameter $\alpha_{2n}=(\alpha_{2n}(1),\ldots,\alpha_{2n}(n-1))\in\{0,1\}^{n-1}$ we first construct a 2-factor in the middle layer of $Q_{2n+1}$.

Note that the layer graphs $Q_{2n}(n,n+1)$ and $Q_{2n}(n-1,n)$ are isomorphic to each other. We define an isomorphism $f_{\alpha_{2n}}$ between these graphs as follows: Let $\pi_{\alpha_{2n}}$ denote the permutation on the set $B_{2n}=\{0,1\}^{2n}$ that swaps any two adjacent bits at positions $2i$ and $2i+1$ for all $i=1,\ldots,n-1$, if and only if $\alpha_{2n}(i)=1$, and that maps the bits at position $1$ and $2n$ to itself. If e.g.\ $\alpha_{2n}=(0,\ldots,0)$, then no bits are swapped and $\pi_{\alpha_{2n}}$ is simply the identity permutation. For any bitstring $x\in B_{2n}$ we then define
\begin{equation} \label{eq:f-alpha}
  f_{\alpha_{2n}}(x):=\ol{\rev(\pi_{\alpha_{2n}}(x))} \enspace.
\end{equation}
The fact that this mapping is indeed an isomorphism between the graphs $Q_{2n}(n,n+1)$ and $Q_{2n}(n-1,n)$ follows easily by observing that $\rev(\pi_{\alpha_{2n}}(\bullet))$ is an automorphism of the graph $Q_{2n}(n,n+1)$ (this mapping just permutes the bits).

\begin{figure}
\centering
\PSforPDF{
 \psfrag{b2n2n}{\scriptsize $B_{2n}(2n)$}
 \psfrag{p2n2nm12n}{\scriptsize $\cP_{2n}(2n-1,2n)$}
 \psfrag{vdots}{$\vdots$}
 \psfrag{b2nnp1}{\scriptsize $B_{2n}(n+1)$}
 \psfrag{p2nnnp1}{\scriptsize $\cP_{2n}(n,n+1)$}
 \psfrag{b2nn}{\scriptsize $B_{2n}(n)$}
 \psfrag{fp}{\scriptsize $f_{\alpha_{2n}}(\cP_{2n}(n,n+1))$}
 \psfrag{b2nnm1}{\scriptsize $B_{2n}(n-1)$}
 \psfrag{b2np22np2}{\scriptsize $B_{2n+2}(2n+2)$}
 \psfrag{p2np2np2np3}{\scriptsize $\cP_{2n+2}(n+2,n+3)$}
 \psfrag{b2np2np2}{\scriptsize $B_{2n+2}(n+2)$}
 \psfrag{p2np2np1np2}{\scriptsize $\cP_{2n+2}(n+1,n+2)$}
 \psfrag{b2np2np1}{\scriptsize $B_{2n+2}(n+1)$}
 \psfrag{q2n}{\Large $Q_{2n}$}
 \psfrag{q2np1}{\Large $Q_{2n+1}\circ(1)$}
 \psfrag{q2np2}{\Large $Q_{2n+2}$}
 \psfrag{q2n1}{$Q_{2n}\circ(0,0)$}
 \psfrag{q2n2}{$Q_{2n}\circ(1,0)$}
 \psfrag{q2n3}{$Q_{2n}\circ(0,1)$}
 \psfrag{q2n4}{$Q_{2n}\circ(1,1)$}
 \psfrag{m2}{$M_{2n+1}^{FL}\circ(1)$} 
 \psfrag{m1}{$M_{2n+2}^{S}$}
 \includegraphics{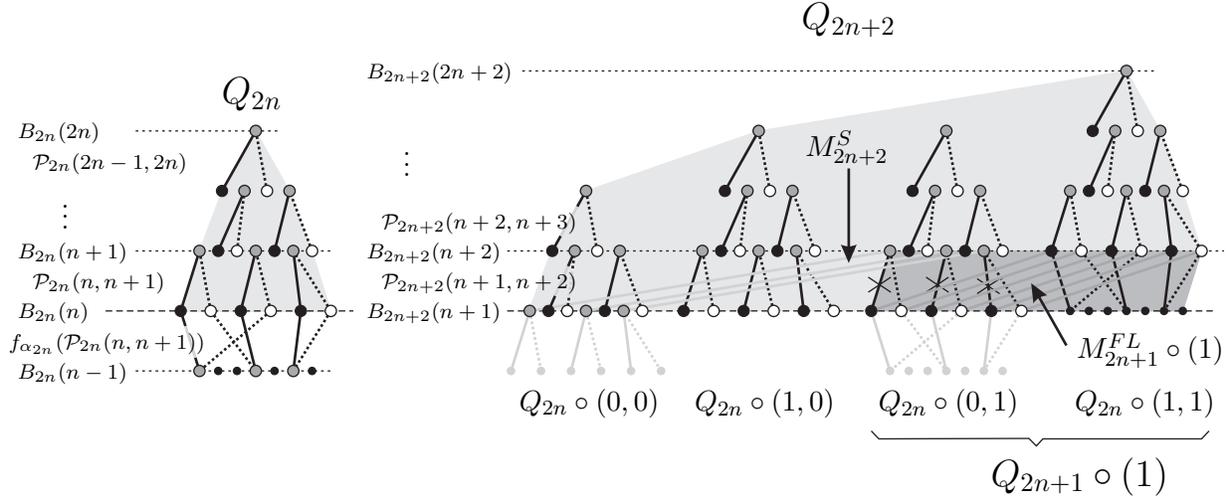}
}
\caption{Schematic illustration of the induction step. The light grey regions show the upper layers of $Q_{2n}$ and $Q_{2n+2}$ and the dark grey region the middle layer of $Q_{2n+1}\circ(1)$. For each dangling oriented path $P=(v_1,v_2,\ldots,v_l)$ contained in one of the families of paths in one of the layers, only the vertices $F(P)=v_1$ (black), $S(P)=v_2$ (grey) and $L(P)=v_l$ (white) are shown, and the path between the vertices $S(P)=v_2$ and $L(P)=v_l$ is represented by a dotted black line (even if this path contains more than one edge). The crossed-out edges are deleted from the 2-factor in the middle layer of $Q_{2n+1}\circ(1)$ to construct the paths in $\cP_{2n+2}(n+1,n+2)$.}
\label{fig:construction}
\end{figure}

We will later prove the following crucial lemma (see the left hand side of Figure~\ref{fig:construction}).

\begin{lemma} \label{lemma:FL-invariant}
For any $n\geq 1$ and any $\alpha_{2n}\in\{0,1\}^{n-1}$, we have
\begin{equation} \label{eq:FL-invariant}
  f_{\alpha_{2n}}(F(\cP_{2n}(n,n+1)))=F(\cP_{2n}(n,n+1)) \quad\text{and}\quad f_{\alpha_{2n}}(L(\cP_{2n}(n,n+1)))=L(\cP_{2n}(n,n+1)) \enspace,
\end{equation}
where $\cP_{2n}(n,n+1)$ is the family of paths in $Q_{2n}(n,n+1)$ constructed in previous steps for an arbitrary sequence of parameters $\alpha_2,\alpha_4,\ldots,\alpha_{2n-2}$, $\alpha_{2i}\in\{0,1\}^{i-1}$.
\end{lemma}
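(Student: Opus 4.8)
The plan is to establish~\eqref{eq:FL-invariant} by induction on $n$, tracking how the families $F(\cP_{2n}(n,n+1))$ and $L(\cP_{2n}(n,n+1))$ are built up through the inductive construction and checking that each is invariant under the isomorphism $f_{\alpha_{2n}}$. The key observation I would start from is the explicit form of $f_{\alpha_{2n}}$ in~\eqref{eq:f-alpha}: it is the composition of the bit-swap permutation $\pi_{\alpha_{2n}}$ (acting only on coordinate pairs $(2i,2i+1)$ for $1\le i\le n-1$), followed by reversal $\rev$, followed by bitwise complementation. Since $f_{\alpha_{2n}}$ fixes the outermost bits (positions $1$ and $2n$) pointwise before reversing and complementing, I expect the first and last bits of the relevant vertices to play a distinguished and rigid role, and the whole argument should reduce to understanding the structure of the first vertices $F$ and last vertices $L$ at a fine combinatorial level.

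\textbf{First I would} isolate from the construction in Section~\ref{sec:construction} an explicit combinatorial description of the sets $F(\cP_{2n}(n,n+1))$ and $L(\cP_{2n}(n,n+1))$ as subsets of $B_{2n}(n)$ (recall these are dangling paths, so both endpoints lie in the lighter class $B_{2n}(n)$). This means unrolling the induction step once: the family $\cP_{2n+2}(n+1,n+2)$ is obtained from the $2$-factor in the middle layer of $Q_{2n+1}\circ(1)$ (built from $\cP_{2n}(n,n+1)$ and its image under $f_{\alpha_{2n}}$) by deleting the crossed-out edges indicated in Figure~\ref{fig:construction}, and I would read off from that surgery exactly which vertices become the new $F$- and $L$-endpoints. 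I anticipate that $F$ and $L$ each admit a clean closed-form pattern — likely a prescribed prefix/suffix of fixed bits together with a recursively-described middle block — and that the complementation-plus-reversal in $f_{\alpha_{2n}}$ maps the $F$-pattern to itself and the $L$-pattern to itself, possibly swapping roles in a way that the two equations in~\eqref{eq:FL-invariant} jointly absorb.

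\textbf{The crucial technical point} is the interaction of $f_{\alpha_{2n}}$ with the bit-swapping freedom encoded by $\alpha_{2n}$. Because $\pi_{\alpha_{2n}}$ swaps the bits in each pair $(2i,2i+1)$ precisely when $\alpha_{2n}(i)=1$, the invariance in~\eqref{eq:FL-invariant} must hold for \emph{every} choice of $\alpha_{2n}\in\{0,1\}^{n-1}$ simultaneously; this forces the endpoint vertices to be organized so that within each coordinate pair $(2i,2i+1)$ the two bits are either constrained symmetrically or the whole set $F(\cP_{2n}(n,n+1))$ is closed under swapping that pair. I would therefore prove a stronger structural invariant by induction — that each of $F(\cP_{2n}(n,n+1))$ and $L(\cP_{2n}(n,n+1))$ is, as a \emph{set}, closed under the involution $\rev(\pi_{\alpha}(\bullet))$ for arbitrary $\alpha$ — from which~\eqref{eq:FL-invariant} follows after composing with complementation and using that $f_{\alpha}$ maps $B_{2n}(n,n+1)$-endpoints back to $B_{2n}(n)$.

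\textbf{The main obstacle} I expect is bookkeeping the endpoint bits correctly through the edge deletions in the induction step: the construction threads paths across four copies of $Q_{2n}$ via the matchings $M_{2n+2}$ and $M_{2n+2}'$, and one must verify that after the surgery the \emph{second} vertices $S$ (which determine condition~(ii) and hence the recursion) do not interfere with the $F,L$ invariance, and that no endpoint accidentally lands outside the fixed-bit pattern. Controlling the first and last bits is the easy part (they are fixed by $\pi_{\alpha_{2n}}$ and then reversed into each other under complementation); the delicate part is showing the interior block is symmetric under pairwise swaps for \emph{all} $\alpha$, which is where I would lean hardest on the inductive hypothesis and on the explicit description of where paths are cut and rejoined.
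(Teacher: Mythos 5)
There is a genuine gap, and it sits exactly at what you call the crucial technical point. Your proposed stronger invariant --- that $F(\cP_{2n}(n,n+1))$ and $L(\cP_{2n}(n,n+1))$ are each closed under $\rev(\pi_{\alpha}(\bullet))$ as sets, with complementation composed on afterwards --- is false, already in the base case: $F(\cP_2(1,2))=\{(1,0)\}$ and $\rev((1,0))=(0,1)\notin F(\cP_2(1,2))$. Nor can the complementation be peeled off as a separate last step, since neither set is closed under complementation either ($\ol{(1,0)}=(0,1)\notin F(\cP_2(1,2))$); only the \emph{combined} map $\ol{\rev(\bullet)}$ preserves these sets. This is transparent in the paper's lattice-path picture: under the bijection $\varphi$ sending $1$ to an upstep and $0$ to a downstep, $F(\cP_{2n}(n,n+1))$ corresponds to the Dyck paths $D_{2n}^{=0}(n)$ and $L(\cP_{2n}(n,n+1))$ to the paths $D_{2n}^{-}(n)$ that dip below $y=0$ exactly once. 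The map $\ol{\rev}$ is the mirror reflection along the axis $x=n$, which preserves both sets, whereas $\rev$ alone negates all heights and destroys them. So the factorization of $f_{\alpha_{2n}}$ that actually works is into $\pi_{\alpha_{2n}}$ and $\ol{\rev}$, each preserving the sets separately; the nontrivial half, that $\pi_{\alpha_{2n}}$ preserves them for \emph{every} $\alpha_{2n}$, is the paper's Lemma~\ref{lemma:dyck-paths-invariant}, proved by a parity argument (on these paths the heights at odd abscissas are odd and at even abscissas even, so a swapped upstep--downstep pair at positions $2i,2i+1$ has height at least $2$ at abscissa $2i$, and lowering it by $2$ cannot push the path below $y=0$ or create a second dip).

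The second gap is that an induction tracking only $F$ and $L$ of the middle layer cannot close. The endpoints created by the surgery in \eqref{eq:ind-step2-P} are \emph{second} vertices of old paths: by \eqref{eq:new-paths-FSL} we have $F(\cP_{2n+2}')=S(\cP_{2n}(n,n+1))\circ(0,0)$ and $L(\cP_{2n+2}')=F(\cP_{2n}(n,n+1))\circ(0,1)$, and the family \eqref{eq:ind-step2-P} also contains $\cP_{2n}(n+1,n+2)\circ(0,0)$ from the layer above. Hence the recursion for the middle-layer $F$-set at step $n+1$ involves the middle-layer $S$-set and the upper-layer $F$-set at step $n$, and these in turn cascade through all upper layers via \eqref{eq:ind-step1-P}. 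Any inductive proof must therefore carry an invariant for all three sets $F$, $S$, $L$ in \emph{all} layers $k=n,\ldots,2n-1$ simultaneously; this is precisely what the paper's Lemma~\ref{lemma:FSL-D-isomorphic} supplies, identifying $\varphi(F_{2n}(k,k+1))=D_{2n}^{=0}(k)$, $\varphi(S_{2n}(k,k+1))=D_{2n}^{>0}(k+1)$ and $\varphi(L_{2n}(k,k+1))=D_{2n}^{-}(k)$. Relatedly, the closed form you anticipate (a fixed prefix/suffix plus a recursively described middle block) is not how these sets look: they are cut out by global ballot-type conditions on the whole bitstring, which is exactly why the translation to lattice paths is the right move and why your bookkeeping of ``fixed outer bits plus symmetric interior'' would not get off the ground.
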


By the decomposition of $Q_{2n+1}$ into two copies of $Q_{2n}$ plus the perfect matching $M_{2n+1}$ described in Section~\ref{sec:notation}, the middle layer of $Q_{2n+1}$ can be decomposed into the graphs $Q_{2n}(n,n+1)\circ(0)$ and $Q_{2n}(n-1,n)\circ(1)$ plus the edges from $M_{2n+1}$ that connect the vertices in the set $B_{2n}(n)\circ(0)$ in the first graph to the vertices in the set $B_{2n}(n)\circ(1)$ in the second graph (see the right hand side of Figure~\ref{fig:construction}). Denoting by $M_{2n+1}^{FL}$ the edges from $M_{2n+1}$ that have one end vertex in the set $\big(F(\cP_{2n}(n,n+1))\cup L(\cP_{2n}(n,n+1))\big)\circ(0)\seq B_{2n}(n)\circ(0)$ (and the other in the set $\big(F(\cP_{2n}(n,n+1))\cup L(\cP_{2n}(n,n+1))\big)\circ(1)\seq B_{2n}(n)\circ(1)$), by Lemma~\ref{lemma:FL-invariant} the graph
\begin{equation} \label{eq:2-factor}
  \cC_{2n+1}:=\cP_{2n}(n,n+1)\circ(0)\cup f_{\alpha_{2n}}(\cP_{2n}(n,n+1))\circ(1)\cup M_{2n+1}^{FL}
\end{equation}
is a 2-factor in the middle layer of $Q_{2n+1}$, with the additional property that on every cycle of $\cC_{2n+1}$, every edge of the form $(F(P),S(P))\circ(0)$ for some $P\in\cP_{2n}(n,n+1)$ is oriented the same way. Even though we are eventually only interested in the 2-factor $\cC_{2n+1}$ defined in \eqref{eq:2-factor}, we need to specify how to proceed with the inductive construction of the families of paths $\cP_{2n+2}(k,k+1)$.

\textit{Second intermediate step: Splitting up the 2-factor into dangling paths.}
We now desribe how the families of paths $\cP_{2n+2}(k,k+1)$ for all $k=n+1,n+2,\ldots,2n+1$ satisfiying the conditions~(i) and (ii) are defined, using the previously constructed families $\cP_{2n}(k,k+1)$ and the 2-factor $\cC_{2n+1}$ defined in the first intermediate step.

Consider the decomposition of $Q_{2n+2}$ into copies of $Q_{2n}\circ(0,0)$, $Q_{2n}\circ(1,0)$, $Q_{2n}\circ(0,1)$ and $Q_{2n}\circ(1,1)$ plus the two perfect matchings $M_{2n+2}$ and $M_{2n+2}'$ as described in Section~\ref{sec:notation}.
For all $k=n+2,\ldots,2n+1$ we define
\begin{align}
  \cP_{2n+2}(k,k+1) &:= \cP_{2n}(k,k+1)\circ(0,0)\cup\cP_{2n}(k-1,k)\circ(1,0) \notag \\
                    &\qquad \cup \cP_{2n}(k-1,k)\circ(0,1)\cup\cP_{2n}(k-2,k-1)\circ(1,1) \label{eq:ind-step1-P} \enspace,
\end{align}
where we use the convention $\cP_{2n}(2n,2n+1):=\emptyset$ and $\cP_{2n}(2n+1,2n+2):=\emptyset$ to unify treatment of the two uppermost layers $\cP_{2n+2}(2n,2n+1)$ and $\cP_{2n+2}(2n+1,2n+2)$ (see Figure~\ref{fig:construction}). Note that so far none of the edges from the matchings $M_{2n+2}$ or $M_{2n+2}'$ is used.

The definition of the family $\cP_{2n+2}(n+1,n+2)$ is slightly more involved. Note that the layer graph $Q_{2n+2}(n+1,n+2)$ can be decomposed into $Q_{2n+1}(n+1,n+2)\circ(0)$ and $Q_{2n+1}(n,n+1)\circ(1)$ plus the edges from $M_{2n+2}$ that connect the vertices in the set $B_{2n+1}(n+1)\circ(0)$ in the first graph to the vertices in the set $B_{2n+1}(n+1)\circ(1)$ in the second graph. The first graph can be further decomposed into $Q_{2n}(n+1,n+2)\circ(0,0)$ and $Q_{2n}(n,n+1)\circ(1,0)$ plus some matching edges that are not relevant here. The second graph is the middle layer of $Q_{2n+1}\circ(1)$. Let $\cC_{2n+1}^-$ denote the graph obtained from the 2-factor $\cC_{2n+1}$  defined in \eqref{eq:2-factor} by removing every edge of the form $(F(P),S(P))\circ(0)$ for some $P\in\cP_{2n}(n,n+1)$ (those edges are crossed out in Figure~\ref{fig:construction}). As on every cycle of $\cC_{2n+1}$ every such edge is oriented the same way, $\cC_{2n+1}^-$ is a family of paths (visiting all vertices of the middle layer of $Q_{2n+1}$), with the property that each of those paths starts at a vertex of the form $S(P)\circ(0)$ and ends at a vertex of the form $F(P')\circ(0)$ for two (not necessarily distinct) paths $P,P'\in\cP_{2n}(n,n+1)$. Letting $M_{2n+2}^S$ denote the edges from $M_{2n+2}$ that have one end vertex in the set $S(\cP_{2n}(n,n+1))\circ(0,0)\seq B_{2n}(n+1)\circ(0,0)$ (and the other in the set $S(\cP_{2n}(n,n+1))\circ(0,1)\seq B_{2n}(n+1)\circ(0,1)$), it follows that
\begin{equation} \label{eq:new-paths}
  \cP_{2n+2}':=M_{2n+2}^S\cup \cC_{2n+1}^-\circ(1)
\end{equation}
is a family of dangling oriented paths, where we choose the orientation of each path such that the edge from the set $M_{2n+2}^S$ is the first edge (see Figure~\ref{fig:construction}).
Note that we have
\begin{subequations} \label{eq:new-paths-FSL}
\begin{align}
  F(\cP_{2n+2}') &= S(\cP_{2n}(n,n+1))\circ(0,0) \enspace, \label{eq:new-paths-F} \\
  S(\cP_{2n+2}') &= S(\cP_{2n}(n,n+1))\circ(0,1) \enspace, \label{eq:new-paths-S} \\
  L(\cP_{2n+2}') &= F(\cP_{2n}(n,n+1))\circ(0,1) \enspace. \label{eq:new-paths-L}
\end{align}
\end{subequations}
We then define
\begin{equation} \label{eq:ind-step2-P}
  \cP_{2n+2}(n+1,n+2):=\cP_{2n}(n+1,n+2)\circ(0,0)\cup\cP_{2n}(n,n+1)\circ(1,0)\cup \cP_{2n+2}' \enspace,
\end{equation}
where in the case $n=1$ we use the convention $\cP_2(2,3):=\emptyset$.

We now argue that the families of paths $\cP_{2n+2}(k,k+1)$, $k=n+1,n+2,\ldots,2n+1$, defined in \eqref{eq:ind-step1-P} and \eqref{eq:ind-step2-P} satisfy the conditions~(i) and (ii). For every $k=n+3,\ldots,2n+1$, by the definition in \eqref{eq:ind-step1-P} and by induction, the paths in $\cP_{2n+2}(k,k+1)$ visit all vertices in the set
\begin{equation*}
  B_{2n}(k+1)\circ(0,0)\cup B_{2n}(k)\circ(1,0)\cup B_{2n}(k)\circ(0,1)\cup B_{2n}(k-1)\circ(1,1)=B_{2n+2}(k+1) \enspace,
\end{equation*}
and the only vertices not visited in the set $B_{2n+2}(k)$ are exactly the elements in the set
\begin{multline*}
  S(\cP_{2n}(k-1,k))\circ(0,0)\cup S(\cP_{2n}(k-2,k-1))\circ(1,0) \\
  \cup S(\cP_{2n}(k-2,k-1))\circ(0,1)\cup S(\cP_{2n}(k-3,k-2))\circ(1,1) \enspace.
\end{multline*}
As for those $k$ the family of paths $\cP_{2n+2}(k-1,k)$ in the layer below is also defined via \eqref{eq:ind-step1-P}, this set is equal to $S(\cP_{2n+2}(k-1,k))$, proving that $\cP_{2n+2}(k,k+1)$ indeed satisfies condition~(ii).

By the definition in \eqref{eq:ind-step1-P} and by induction, the paths in the family $\cP_{2n+2}(n+2,n+3)$ visit all vertices in the set $B_{2n+2}(n+3)$, and the only vertices not visited in the set $B_{2n+2}(n+2)$ are exactly the elements in the set
\begin{equation*}
  S(\cP_{2n}(n+1,n+2))\circ(0,0)\cup S(\cP_{2n}(n,n+1))\circ(1,0)\cup S(\cP_{2n}(n,n+1))\circ(0,1) \enspace.
\end{equation*}
By the definition in \eqref{eq:ind-step2-P} and by \eqref{eq:new-paths-S} this set is equal to $S(\cP_{2n+2}(n+1,n+2))$, proving that $\cP_{2n+2}(n+2,n+3)$ indeed satisfies condition~(ii).

It remains to show that the family $\cP_{2n+2}(n+1,n+2)$ satisfies condition~(i). This follows directly from the definitions in \eqref{eq:new-paths} and \eqref{eq:ind-step2-P} and by induction, using that the paths in $\cC_{2n+1}^-\circ(1)$ visit \emph{all} vertices in the middle layer of $Q_{2n+1}\circ(1)$ (recall that those paths were obtained from a 2-factor in this graph), and that the only vertices in $Q_{2n}(n+1,n+2)\circ(0,0)$ not visited by the paths in $\cP_{2n}(n+1,n+2)\circ(0,0)$ are exactly the first vertices of the paths $\cP_{2n+2}'$ (cf.~\eqref{eq:new-paths-F}).

\subsection{Dependence on the parameter sequence}
\label{sec:dependence-alpha}

It follows inductively from our construction that for all $k=n,\ldots,2n-1$, the family of paths $\cP_{2n}(k,k+1)$ depends on all parameters $\alpha_2,\alpha_4,\ldots,\alpha_{2(2n-1-k)}$, and that the 2-factor $\cC_{2n+1}$ defined in \eqref{eq:2-factor} depends on all parameters $\alpha_2,\alpha_4,\ldots,\alpha_{2n}$.
As $\alpha_{2i}\in\{0,1\}^{i-1}$, our construction therefore yields at most $\prod_{i=1}^{n} 2^{i-1}=2^{\binom{n}{2}}$ different 2-factors in the middle layer of $Q_{2n+1}$. We will show later (see Theorem~\ref{thm:different-2-factors} below) that all of those 2-factors are indeed different.
The most interesting question is of course how the chosen parameter sequence affects the number and lengths of the cycles in the 2-factor $\cC_{2n+1}$ (see Section~\ref{sec:structure-2-factor} below). Of course, the number and/or the lengths of the cycles might be the same even for different parameter sequences.

Even though the paths in the families $\cP_{2n}(k,k+1)$ depend on the parameter sequence $(\alpha_{2i})_{i\geq 1}$, it follows from Lemma~\ref{lemma:FL-invariant} that the sets of first, second and last vertices of paths from those families do not depend on the sequence $(\alpha_{2i})_{i\geq 1}$. In particular, the number of paths in the families $\cP_{2n}(k,k+1)$ is independent of $(\alpha_{2i})_{i\geq 1}$ (those numbers are already fixed by the conditions~(i) and (ii) from Section~\ref{sec:construction} and the cardinalities of the sets $B_{2n}(k)$, $k=n,n+1,\ldots,2n$). Note moreover that the pairs $(F(P),S(P))$ for all paths $P\in\cP_{2n}(k,k+1)$ are the same regardless of the sequence $(\alpha_{2i})_{i\geq 1}$ (which last vertex $L(P)$ from the set of all last vertices belongs to this path does of course depend on the chosen parameter sequence). As we will see later, even the length of those paths is independent of the parameter sequence (see Lemma~\ref{lemma:all-alpha-paths} below).

\section{Correctness of the construction}
\label{sec:correctness}

In this section we prove Lemma~\ref{lemma:FL-invariant}, thus showing that our construction described in the last section indeed works as claimed. Our proof strategy is as follows: After setting up some machinery that relates bitstrings to another family of combinatorial objects, namely lattice paths, we consider an abstract recursion over sets of bitstrings and show that the solutions of this recursion correspond to certain families of lattice paths. It will then be easy to convince ourselves that the sets of first, second and last vertices of the oriented paths in the families $\cP_{2n}(k,k+1)$ arising in our construction satisfy exactly this abstract recursion, which allows us to apply our knowledge from the world of lattice paths and to derive Lemma~\ref{lemma:FL-invariant}.

\subsection{Bitstrings and lattice paths}
\label{sec:lattice-paths}

In this section we introduce some terminology related to lattice paths in $\mathbb{Z}^2$, explain the relation of those combinatorial objects to bitstrings (these are the vertex labels of $Q_n$ and thus the objects our construction works with), and establish an invariance property of certain families of lattice paths (Lemma~\ref{lemma:dyck-paths-invariant} below).

\textit{Various families of lattice paths.}
For any integer $n\geq 0$ we denote by $P_n$ the set of lattice paths in $\mathbb{Z}^2$ that start at $(0,0)$ and move $n$ steps, each of which changes the current coordinate by either $(+1,+1)$ or $(+1,-1)$. We refer to such a step as an upstep or downstep, respectively.
For any integers $n,k\geq 0$ we let $D_n(k)$ denote the set of lattice paths from $P_n$ that never move below the line $y=0$ and that have exaktly $k$ upsteps\footnote{Our notation is motivated by the fact that the lattice paths in the set $D_{2n}(n)$, which start at $(0,0)$, end at $(2n,0)$ and never move below the line $y=0$, are commonly known as \emph{Dyck paths} in the literature.}.
Note that such a path has $n-k$ downsteps and therefore ends at $(n,2k-n)$.
We define $D_n^{>0}(k)\seq D_n(k)$ as the set of lattice paths that have no point of the form $(x,0)$, $1\leq x\leq n$, and $D_n^{=0}(k)\seq D_n(k)$ as the set of lattice paths that have at least one point of the form $(x,0)$, $1\leq x\leq n$. We clearly have $D_n(k)=D_n^{=0}(k)\cup D_n^{>0}(k)$.
Furthermore, we let $D_n^-(k)$ denote the set of lattice paths from $P_n$ that move below $y=0$ exactly once and that have exactly $k$ upsteps. Note that such a path has exactly one point of the form $(x,-1)$ and ends at $(n,2k-n)$.
Depending on the values of $n$ and $k$ the sets of lattice paths we just defined might be empty. E.g., we have $D_{2n}^{>0}(n)=\emptyset$ and therefore $D_{2n}(n)=D_{2n}^{=0}(n)$.

Given two lattice paths $p$ and $q$, we denote by $p\circ q$ the lattice path obtained by gluing the first point of $q$ onto the last point of $p$ (the first point of $p\circ q$ is the same as the first point of $p$). For a set of lattice paths $P$ and a lattice path $q$ we define $P\circ q:=\{p\circ q\mid p\in P\}$.
We sometimes identify a lattice path $p\in P_n$ with its step sequence $p=(p_1,\ldots,p_n)$, $p_i\in\{\nearrow,\searrow\}$, where $p_i=\nearrow$ if the $i$-th step of $p$ is an upstep and $p_i=\searrow$ if the $i$-th step of $p$ is a downstep.
Using these notations we clearly have for $*\in\{=0,-\}$, all $n\geq 1$ and all $k=n+2,\ldots,2n+1$ that
\begin{subequations} \label{eq:D-partitions}
\begin{align}
  D_{2n+2}^*(k) &= D_{2n}^*(k)\circ (\searrow,\searrow)
                 \cup D_{2n}^*(k-1)\circ (\nearrow,\searrow) \notag \\
                 &\qquad \cup D_{2n}^*(k-1)\circ (\searrow,\nearrow)
                               \cup D_{2n}^*(k-2)\circ (\nearrow,\nearrow) \enspace, \label{eq:D2np2-*-u-partition} \\
  D_{2n+2}^{>0}(k+1) &= D_{2n}^{>0}(k+1)\circ (\searrow,\searrow)
                 \cup D_{2n}^{>0}(k)\circ (\nearrow,\searrow) \notag \\
                 &\qquad \cup D_{2n}^{>0}(k)\circ (\searrow,\nearrow)
                               \cup D_{2n}^{>0}(k-1)\circ (\nearrow,\nearrow) \enspace. \label{eq:D2np2-g0-u-partition}
\end{align}
Similarly, for all $n\geq 0$ we have
\begin{align}
  D_{2n+2}^{=0}(n+1) &= \big(D_{2n}^{=0}(n+1)\cup D_{2n}^{>0}(n+1)\big) \circ (\searrow,\searrow) \cup
                             D_{2n}^{=0}(n)\circ (\nearrow,\searrow) \enspace, \label{eq:D2np2-eq0-m-partition} \\
  D_{2n+2}^{>0}(n+2) &= D_{2n}^{>0}(n+2)\circ (\searrow,\searrow) \cup
                        D_{2n}^{>0}(n+1)\circ (\nearrow,\searrow) \cup
                        D_{2n}^{>0}(n+1)\circ (\searrow,\nearrow) \enspace, \label{eq:D2np2-g0-m-partition} \\
  D_{2n+2}^-(n+1) &= D_{2n}^-(n+1)\circ (\searrow,\searrow) \cup
                     D_{2n}^-(n)\circ (\nearrow,\searrow) \cup
                     D_{2n}^{=0}(n)\circ (\searrow,\nearrow) \enspace. \label{eq:D2np2-m-m-partition}
\end{align}
\end{subequations}
Note that all the unions in \eqref{eq:D-partitions} are disjoint and that some of the sets participating in the unions might be empty.

\textit{Bijection $\varphi$ between bitstrings and lattice paths.}
For any $x\in B_n=\{0,1\}^n$, $x=(x_1,\ldots,x_n)$, we define $\varphi(x)$ as the lattice path from $P_n$ whose $i$-th step is an upstep if $x_i=1$ and a downstep if $x_i=0$. Note that the step sequence of $\varphi(x)$ is obtained from $(x_1,\ldots,x_n)$ by replacing every entry $1$ by $\nearrow$ and every entry $0$ by $\searrow$. This mapping is clearly a bijection between $B_n$ and $P_n$.

We extend the operation of reversing and inverting a bitstring to lattice paths by defining the mapping $\ol{\rev}:P_n\rightarrow P_n$ as
\begin{equation} \label{eq:rev-paths}
  \ol{\rev}:=\varphi\circ\ol{\rev}\circ\varphi^{-1}
\end{equation}
(we write the composition of mappings $g$ and $h$ as $g\circ h$, where $(g\circ h)(x):=g(h(x))$).
Note that $\ol{\rev}$ as defined in \eqref{eq:rev-paths} simply mirrors every lattice path from the set $P_{2n}$ with endpoint $(2n,0)$ along the axis $x=n$.

In a similar fashion we also extend the mappings $\pi_{\alpha_{2n}}$ and $f_{\alpha_{2n}}$, defined around \eqref{eq:f-alpha} as mappings on the set $B_{2n}$, to mappings on the set $P_{2n}$, by defining for any $\alpha_{2n}\in\{0,1\}^{n-1}$
\begin{equation} \label{eq:pi-paths}
  \pi_{\alpha_{2n}} := \varphi\circ \pi_{\alpha_{2n}}\circ\varphi^{-1}
\end{equation}
and
\begin{equation} \label{eq:f-alpha-paths}
  f_{\alpha_{2n}} := \varphi\circ f_{\alpha_{2n}} \circ\varphi^{-1}
    \eqBy{eq:f-alpha} \varphi\circ \ol{\rev}\circ\pi_{\alpha_{2n}} \circ\varphi^{-1}
    \eqByM{\eqref{eq:rev-paths},\eqref{eq:pi-paths}} \ol{\rev}\circ\pi_{\alpha_{2n}} \enspace.
\end{equation}
Note that $\pi_{\alpha_{2n}}$ as defined in \eqref{eq:pi-paths} swaps the order of any two adjacent steps $2i$ and $2i+1$, $i=1,\ldots,n-1$, of a given lattice path from the set $P_{2n}$, if and only if $\alpha_{2n}(i)=1$.

\begin{lemma} \label{lemma:dyck-paths-invariant}
For any $n\geq 1$ and any $\alpha_{2n}\in\{0,1\}^{n-1}$ the mapping $f_{\alpha_{2n}}:P_{2n}\rightarrow P_{2n}$ defined in \eqref{eq:f-alpha-paths} maps each of the sets $D_{2n}^{=0}(n)$ and $D_{2n}^-(n)$ onto itself. Furthermore, for any lattice path $p\in D_{2n}^-(n)$, denoting by $x$ and $x'$ the abscissas where $p$ and $f_{\alpha_{2n}}(p)$ touch the line $y=-1$, respectively, we have $x'=2n-x$.
\end{lemma}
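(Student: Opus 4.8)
The plan is to prove both assertions by reducing the action of $f_{\alpha_{2n}}=\ol{\rev}\circ\pi_{\alpha_{2n}}$ to its effect on the \emph{geometry} of a lattice path, exploiting the decomposition \eqref{eq:f-alpha-paths} into the step-swapping map $\pi_{\alpha_{2n}}$ followed by the mirror map $\ol{\rev}$. First I would observe that $\pi_{\alpha_{2n}}$ only swaps adjacent steps at positions $(2i,2i+1)$ for $i=1,\ldots,n-1$, and crucially never touches the first step (position $1$) or the last step (position $2n$). Swapping two adjacent steps of a path can only alter the height profile at the single intermediate lattice point between them; everywhere else the path passes through exactly the same lattice points. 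More precisely, swapping a $\nearrow\searrow$ pair into $\searrow\nearrow$ (or vice versa) lowers (resp.\ raises) the intermediate point by $2$, while swapping two equal steps does nothing. The key numerical fact I would record is that the swapped positions $2i,2i+1$ are \emph{even--odd} pairs, so the intermediate point sits at an \emph{odd} abscissa $2i$, whereas the points where a path from $D_{2n}^{=0}(n)$ or $D_{2n}^-(n)$ meets the lines $y=0$ or $y=-1$ must sit at \emph{even} abscissas (parity of height equals parity of abscissa, since the path starts at the origin). This parity mismatch will be the engine of the whole argument.

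The main step is to show $\pi_{\alpha_{2n}}$ maps each of $D_{2n}^{=0}(n)$ and $D_{2n}^-(n)$ into itself. Both sets consist of paths from $(0,0)$ to $(2n,0)$ with $n$ upsteps (endpoint is preserved since $\pi_{\alpha_{2n}}$ only permutes steps). For $D_{2n}^{=0}(n)$ (the Dyck paths, which stay weakly above $y=0$) I would argue that a swap can only change the height at an odd-abscissa intermediate point, and by the parity observation such a point never lies \emph{on} $y=0$; hence lowering it by at most $2$ from a value that is at least $1$ (being odd and positive) keeps it at least $-1$, but I must sharpen this: since the intermediate height is odd, ``weakly above $0$'' forces it to be $\geq 1$, and after a downward swap it becomes $\geq -1$, yet parity forbids $-1$ at an even-height constraint. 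The clean way is to note that between two consecutive even-abscissa points the path can dip by at most one from its even-abscissa values; staying $\geq 0$ at even abscissas is preserved by the swap because those points are untouched, and the odd intermediate point, being odd-valued, is $\geq 1$ iff the path does not go below $0$ there, a condition that a single adjacent swap cannot break. The identical reasoning, shifted down by one, handles $D_{2n}^-(n)$: the unique point at height $-1$ sits at an even abscissa and is therefore never an intermediate swap point, so the ``exactly one excursion to $y=-1$'' property survives, and moreover the abscissa of that lowest point is \emph{unchanged} by $\pi_{\alpha_{2n}}$. Since $\pi_{\alpha_{2n}}$ is an involution (each swap is its own inverse, applied independently), ``maps into'' upgrades to ``maps onto.''

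Next I would apply $\ol{\rev}$, which mirrors a path with endpoints $(0,0),(2n,0)$ across the vertical line $x=n$. This manifestly permutes $D_{2n}^{=0}(n)$ onto itself (mirroring a Dyck path gives a Dyck path) and $D_{2n}^-(n)$ onto itself (the single dip below $0$ is preserved, just reflected). Composing, $f_{\alpha_{2n}}$ maps each set onto itself, giving the first assertion. For the final sentence, let $p\in D_{2n}^-(n)$ touch $y=-1$ at abscissa $x$. By the paragraph above, $\pi_{\alpha_{2n}}(p)$ still touches $y=-1$ at abscissa $x$; then $\ol{\rev}$ reflects this abscissa across $x=n$, sending $x\mapsto 2n-x$, so $f_{\alpha_{2n}}(p)$ touches $y=-1$ at $x'=2n-x$, exactly as claimed. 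I expect the \textbf{main obstacle} to be the bookkeeping in the invariance-under-swaps step: one must argue carefully that a single adjacent transposition at an odd abscissa cannot create a new violation of the ``$\geq 0$'' (respectively the ``exactly one dip to $-1$'') constraint, and here the even/odd abscissa parity—forcing all the \emph{constraint-relevant} lattice points to even abscissas, disjoint from the \emph{swap-relevant} odd abscissas—is the decisive observation that makes the geometry work out cleanly.
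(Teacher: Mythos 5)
Your overall strategy is the same as the paper's (factor $f_{\alpha_{2n}}$ into $\pi_{\alpha_{2n}}$ followed by the mirror map, exploit the parity of abscissas, and use that $\pi_{\alpha_{2n}}$ is an involution to upgrade ``into'' to ``onto''), but your parity bookkeeping is inverted, and this breaks the core of the argument. Swapping the steps at positions $2i$ and $2i+1$ alters the path only at the point with abscissa $2i$, which is \emph{even} (you call it ``an odd abscissa $2i$'', which is already self-contradictory). Moreover, by your own rule ``parity of height equals parity of abscissa'', a point on the line $y=-1$ has \emph{odd} abscissa, not even, while a point on $y=0$ has even abscissa. So the true situation is: the swap-affected points and the points on $y=0$ live at the \emph{same} parity (even), and only the $y=-1$ point is parity-separated from the swaps. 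Your ``parity mismatch engine'' therefore fails precisely for the constraint $y\geq 0$: a path in $D_{2n}^{=0}(n)$ can perfectly well sit at height $0$ at a swap abscissa $2i$. Your attempted patch is circular --- it ends by asserting that nonnegativity at the intermediate point is ``a condition that a single adjacent swap cannot break'', which is exactly the statement to be proved; and under the parities as you stated them it is even false (an up--down pair whose two untouched neighbours lie at height $0$ would be pushed down to $-1$).

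What is missing is the one substantive step of the paper's proof: in the only case where the swap lowers the path, namely when steps $2i$ and $2i+1$ are an upstep followed by a downstep, the height $y'$ at abscissa $2i$ satisfies $y'\geq 2$. Indeed $y'$ is even, and if $y'\leq 0$ then the two neighbouring points at the odd abscissas $2i-1$ and $2i+1$ would both have height $y'-1\leq -1$, i.e.\ the path would have at least two points below $y=0$ --- impossible both for $D_{2n}^{=0}(n)$ (which has no such points) and for $D_{2n}^{-}(n)$ (which has exactly one). Hence lowering by $2$ keeps the path in its set; note that this also covers the case you skipped for $D_{2n}^{-}(n)$, namely that the swap might create a \emph{new} point below $y=0$ (you only argue that the existing $-1$ point survives). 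Once this is in place, the remainder of your outline (the $-1$ point sits at an odd abscissa, is therefore fixed by $\pi_{\alpha_{2n}}$, and is sent to $2n-x$ by the mirror) is correct and agrees with the paper.
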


Note that even though the sets $D_{2n}^{=0}(n)$ and $D_{2n}^-(n)$ are invariant under the mapping $f_{\alpha_{2n}}$, changing the parameter $\alpha_{2n}$ will of course change the images of certain lattice paths from those sets.

\begin{proof}
The first part of the lemma follows from \eqref{eq:f-alpha-paths} if we can show that each of the mappings $\overline{\rev}:P_{2n}\rightarrow P_{2n}$ and $\pi_{\alpha_{2n}}:P_{2n}\rightarrow P_{2n}$ maps each of the sets $D_{2n}^{=0}(n)$ and $D_{2n}^-(n)$ onto itself.

For the mapping $\overline{\rev}$ this is trivial, as $\overline{\rev}$ simply mirrors every lattice path from the set $P_{2n}$ with endpoint $(2n,0)$ along the axis $x=n$.

Note that the mapping $\pi_{\alpha_{2n}}$ leaves the $y$-coordinates of a given lattice path at all \emph{odd} abscissas $x=1,3,\ldots,2n-1$ invariant, and decreases the $y$-coordinates at all \emph{even} abscissas $x=2i$, $i=1,\ldots,n-1$, by $-2$ if and only if $\alpha_{2n}(i)=1$ and the steps $2i$ and $2i+1$ of the path are an upstep and a downstep, respectively. This mapping clearly leaves the $y$-coordinates at the abscissas $x=0$ and $x=2n$ invariant as well.

Observe that for every lattice path from the set $D_{2n}^{=0}(n)$ or from the set $D_{2n}^-(n)$, the $y$-coordinates at all odd abscissas $x=1,3,\ldots,2n-1$ are odd, and the $y$-coordinates at all even abscissas $x=0,2,\ldots,2n$ are even (in particular, the abscissa where a lattice path from the set $D_{2n}^-(n)$ touches the line $y=-1$ is odd). This property implies that for any pair $2i$ and $2i+1$, $1\leq i\leq n-1$, of an upstep and a downstep on such a path, the point $(2i,y')$ on the path satisfies $y'\geq 2$ ($y'$ must be even, and if it were 0 or less, then this path would have at least two points with a negative $y$-coordinate). Using these observations and the above-mentioned properties how the mapping $\pi_{\alpha_{2n}}$ affects the $y$-coordinates at the odd and even abscissas, it follows that this mapping indeed maps each of the sets $D_{2n}^{=0}(n)$ and $D_{2n}^-(n)$ onto itself. This proves the first part of the lemma.

The second part of the lemma follows immediately from the observation that the point $(x,-1)$, $0\leq x\leq 2n$, on a lattice path $p\in D_{2n}^-(n)$ must have an odd abscissa and is therefore invariant under the mapping $\pi_{\alpha_{2n}}$.
\end{proof}

\subsection{An abstract recursion}

In this section we define an abstract recursion over sets of bitstrings and show that the solutions of this recursion correspond to certain families of lattice paths (Lemma~\ref{lemma:FSL-D-isomorphic} below).

For all $n\geq 1$ and all $k=n,n+1,\ldots,2n-1$ we define sets of bitstrings $F_{2n}(k,k+1)\seq B_{2n}(k)$, $S_{2n}(k,k+1)\seq B_{2n}(k+1)$ and $L_{2n}(k,k+1)\seq B_{2n}(k)$ recursively as follows:

For $n=1$ we define
\begin{equation} \label{eq:ind-base-FSL}
  F_2(1,2) := \{(1,0)\} \enspace, \quad
  S_2(1,2) := \{(1,1)\} \enspace, \quad
  L_2(1,2) := \{(0,1)\} \enspace.
\end{equation}

For any $n\geq 1$ and all $k=n+2,\ldots,2n+1$ we define
\begin{subequations} \label{eq:ind-step1-FSL}
\begin{align}
  F_{2n+2}(k,k+1) &:= F_{2n}(k,k+1)\circ(0,0)\cup F_{2n}(k-1,k)\circ(1,0) \notag \\
                  &\qquad \cup F_{2n}(k-1,k)\circ(0,1)\cup F_{2n}(k-2,k-1)\circ(1,1) \enspace, \label{eq:ind-step1-F} \\
  S_{2n+2}(k,k+1) &:= S_{2n}(k,k+1)\circ(0,0)\cup S_{2n}(k-1,k)\circ(1,0) \notag \\
                  &\qquad \cup S_{2n}(k-1,k)\circ(0,1)\cup S_{2n}(k-2,k-1)\circ(1,1) \enspace, \label{eq:ind-step1-S} \\
  L_{2n+2}(k,k+1) &:= L_{2n}(k,k+1)\circ(0,0)\cup L_{2n}(k-1,k)\circ(1,0) \notag \\
                  &\qquad \cup L_{2n}(k-1,k)\circ(0,1)\cup L_{2n}(k-2,k-1)\circ(1,1) \enspace, \label{eq:ind-step1-L}
\end{align}
\end{subequations}
where we use the convention $F_{2n}(2n,2n+1):=S_{2n}(2n,2n+1):=L_{2n}(2n,2n+1):=\emptyset$ and $F_{2n}(2n+1,2n+2):=S_{2n}(2n+1,2n+2):=L_{2n}(2n+1,2n+2):=\emptyset$.

Furthermore, for any $n\geq 1$ we define
\begin{subequations} \label{eq:ind-step2-FSL}
\begin{align}
  F_{2n+2}(n+1,n+2) &:= F_{2n}(n+1,n+2)\circ(0,0)\cup F_{2n}(n,n+1)\circ(1,0)\cup S_{2n}(n,n+1)\circ(0,0) \enspace, \label{eq:ind-step2-F} \\
  S_{2n+2}(n+1,n+2) &:= S_{2n}(n+1,n+2)\circ(0,0)\cup S_{2n}(n,n+1)\circ(1,0)\cup S_{2n}(n,n+1)\circ(0,1) \enspace, \label{eq:ind-step2-S} \\
  L_{2n+2}(n+1,n+2) &:= L_{2n}(n+1,n+2)\circ(0,0)\cup L_{2n}(n,n+1)\circ(1,0)\cup F_{2n}(n,n+1)\circ(0,1) \enspace, \label{eq:ind-step2-L}
\end{align}
\end{subequations}
where in the case $n=1$ we use the convention $F_2(2,3):=S_2(2,3):=L_2(2,3):=\emptyset$.

\begin{lemma} \label{lemma:FSL-D-isomorphic}
For any $n\geq 1$ and all $k=n,n+1,\ldots,2n-1$ we have
\begin{subequations}
\begin{align}
  \varphi(F_{2n}(k,k+1)) &= D_{2n}^{=0}(k) \enspace, \label{eq:F-D-isomorphic} \\
  \varphi(S_{2n}(k,k+1)) &= D_{2n}^{>0}(k+1) \enspace, \label{eq:S-D-isomorphic} \\
  \varphi(L_{2n}(k,k+1)) &= D_{2n}^-(k) \enspace, \label{eq:L-D-isomorphic}
\end{align}
\end{subequations}
where the sets $F_{2n}(k,k+1)$, $S_{2n}(k,k+1)$ and $L_{2n}(k,k+1)$ are defined in \eqref{eq:ind-base-FSL}, \eqref{eq:ind-step1-FSL} and \eqref{eq:ind-step2-FSL}.
\end{lemma}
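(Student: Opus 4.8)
The plan is to prove all three identities \eqref{eq:F-D-isomorphic}, \eqref{eq:S-D-isomorphic} and \eqref{eq:L-D-isomorphic} simultaneously by induction on $n$, handling all admissible values of $k$ at once. Everything rests on one structural observation about the bijection $\varphi$: since $\varphi$ replaces each bit $1$ by $\nearrow$ and each bit $0$ by $\searrow$ independently of position, it commutes with concatenation, i.e. $\varphi(X\circ y)=\varphi(X)\circ\varphi(y)$ for every set of bitstrings $X$ and every suffix $y$. In particular the four two-bit suffixes translate as $(0,0)\mapsto(\searrow,\searrow)$, $(1,0)\mapsto(\nearrow,\searrow)$, $(0,1)\mapsto(\searrow,\nearrow)$ and $(1,1)\mapsto(\nearrow,\nearrow)$. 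Consequently, applying $\varphi$ to the defining recursions \eqref{eq:ind-step1-FSL} and \eqref{eq:ind-step2-FSL} turns each concatenation by a two-bit suffix into the concatenation by the matching two-step suffix that appears in the partition identities \eqref{eq:D-partitions}. The entire argument is then just a transport of the recursion across $\varphi$, reading off the partition identities on the lattice-path side.

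For the base case $n=1$ I would evaluate $\varphi$ directly on the singletons in \eqref{eq:ind-base-FSL}: $\varphi((1,0))=(\nearrow,\searrow)$ is the unique element of $D_2^{=0}(1)$, $\varphi((1,1))=(\nearrow,\nearrow)$ is the unique element of $D_2^{>0}(2)$, and $\varphi((0,1))=(\searrow,\nearrow)$ is the unique element of $D_2^-(1)$, which verifies \eqref{eq:F-D-isomorphic}--\eqref{eq:L-D-isomorphic} for $n=1$.

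For the induction step $n\to n+1$ I would first treat the generic upper layers $k=n+2,\ldots,2n+1$. Applying $\varphi$ to \eqref{eq:ind-step1-F} and using the inductive hypothesis $\varphi(F_{2n}(j,j+1))=D_{2n}^{=0}(j)$ on each of the four terms produces exactly the right-hand side of the $=0$ instance of \eqref{eq:D2np2-*-u-partition}, which equals $D_{2n+2}^{=0}(k)$; likewise \eqref{eq:ind-step1-L} with the $-$ instance of \eqref{eq:D2np2-*-u-partition} gives $D_{2n+2}^-(k)$, and \eqref{eq:ind-step1-S} together with $\varphi(S_{2n}(j,j+1))=D_{2n}^{>0}(j+1)$ reproduces \eqref{eq:D2np2-g0-u-partition} and hence $D_{2n+2}^{>0}(k+1)$. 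Along the way one must also check that the empty-set conventions $F_{2n}(2n,2n+1)=\emptyset$, $S_{2n}(2n+1,2n+2)=\emptyset$, and so on, are consistent with the genuinely empty lattice-path sets (for instance $D_{2n}^{=0}(2n)=\emptyset$, since a path with $2n$ upsteps never returns to $y=0$), so that the recursions and the partition identities still agree at their upper boundary.

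Finally, the middle level $k=n+1$ uses the separate identities \eqref{eq:ind-step2-FSL}, whose shape differs from the four-term recursions above. Applying $\varphi$ to \eqref{eq:ind-step2-F} and invoking the hypothesis on both $F$ and $S$ yields $\big(D_{2n}^{=0}(n+1)\cup D_{2n}^{>0}(n+1)\big)\circ(\searrow,\searrow)\cup D_{2n}^{=0}(n)\circ(\nearrow,\searrow)$, which is precisely \eqref{eq:D2np2-eq0-m-partition}; similarly \eqref{eq:ind-step2-S} reproduces \eqref{eq:D2np2-g0-m-partition} and \eqref{eq:ind-step2-L} reproduces \eqref{eq:D2np2-m-m-partition}. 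I expect this middle-level case to be the only real subtlety: unlike the generic layers, the $F$-recursion here merges a contribution from the lower-dimensional $F$-set (the paths that touch $y=0$) with one from the lower-dimensional $S$-set (the paths that stay strictly positive) under the common suffix $(0,0)$, mirroring the fact that $D_{2n}(n+1)=D_{2n}^{=0}(n+1)\cup D_{2n}^{>0}(n+1)$ while at the very bottom $D_{2n}(n)=D_{2n}^{=0}(n)$. Getting this merge and its index bookkeeping exactly right, and confirming that the three-term right-hand sides of \eqref{eq:ind-step2-FSL} line up term-by-term with \eqref{eq:D2np2-eq0-m-partition}--\eqref{eq:D2np2-m-m-partition}, is where care is needed; the remainder is mechanical.
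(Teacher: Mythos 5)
Your proposal is correct and follows essentially the same route as the paper's own proof: induction on $n$, applying $\varphi$ (which commutes with concatenation of suffixes) to the recursions \eqref{eq:ind-step1-FSL} and \eqref{eq:ind-step2-FSL} and matching the results term-by-term with the partition identities \eqref{eq:D-partitions}, with the case split $n+2\leq k\leq 2n+1$ versus $k=n+1$ exactly as in the paper. Your explicit attention to the empty-set conventions at the upper boundary and to the merge $D_{2n}^{=0}(n+1)\cup D_{2n}^{>0}(n+1)$ under the suffix $(\searrow,\searrow)$ in the middle level are precisely the points the paper handles via the stated disjointness and emptiness remarks following \eqref{eq:D-partitions}.
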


Note that all unions in \eqref{eq:ind-step1-FSL} and \eqref{eq:ind-step2-FSL} are disjoint: This is obvious for the definitions in \eqref{eq:ind-step1-FSL}, \eqref{eq:ind-step2-S} and \eqref{eq:ind-step2-L}, as the two-bit strings appended to the bitstrings in the sets participating in each of the unions are distinct. For the definition in \eqref{eq:ind-step2-F} this follows from Lemma~\ref{lemma:FSL-D-isomorphic}, noting that by \eqref{eq:F-D-isomorphic} and \eqref{eq:S-D-isomorphic} the sets $F_{2n}(n+1,n+2)$ and $S_{2n}(n,n+1)$ participating in the union correspond to the sets $D_{2n}^{=0}(n+1)$ and $D_{2n}^{>0}(n+1)$ and are therefore disjoint.

\begin{proof}
We argue by induction over $n$. The fact that all three claimed relations hold for $n=1$ follows immediately from \eqref{eq:ind-base-FSL}.
For the induction step let $n\geq 1$ be fixed. We prove that the statement of the lemma holds for $n+1$ assuming that it holds for $n$. We distinguish the cases $n+2\leq k\leq 2n+1$ and $k=n+1$.

For $k=n+2,\ldots,2n+1$ we have
\begin{equation*}
\begin{split}
  \varphi(F_{2n+2}(k,k+1)) &\eqBy{eq:ind-step1-F}
    \varphi(F_{2n}(k,k+1)\circ(0,0)) \cup
    \varphi(F_{2n}(k-1,k)\circ(1,0)) \\
    &\qquad \cup
    \varphi(F_{2n}(k-1,k)\circ(0,1)) \cup
    \varphi(F_{2n}(k-2,k-1))\circ(1,1)) \\
  &\eqBy{eq:F-D-isomorphic}
    D_{2n}^{=0}(k)\circ(\searrow,\searrow) \cup
    D_{2n}^{=0}(k-1)\circ(\nearrow,\searrow) \\
    &\qquad \cup
    D_{2n}^{=0}(k-1)\circ(\searrow,\nearrow) \cup
    D_{2n}^{=0}(k-2)\circ(\nearrow,\nearrow)
  \eqBy{eq:D2np2-*-u-partition} D_{2n+2}^{=0}(k) \enspace,
\end{split}
\end{equation*}
where we used the induction hypothesis in the second step.
The proof that also the last two relations stated in the lemma hold in this case goes along very similar lines, using \eqref{eq:ind-step1-S}, \eqref{eq:S-D-isomorphic} and \eqref{eq:D2np2-g0-u-partition} in the first, second and third step, or \eqref{eq:ind-step1-L}, \eqref{eq:L-D-isomorphic} and \eqref{eq:D2np2-*-u-partition}, respectively. We omit the details here.

For the case $k=n+1$ we obtain
\begin{equation*}
\begin{split}
  \varphi(& F_{2n+2}(n+1,n+2)) \\
  &\eqBy{eq:ind-step2-F} 
    \varphi(F_{2n}(n+1,n+2)\circ(0,0)) \cup
    \varphi(F_{2n}(n,n+1)\circ(1,0)) \cup
    \varphi(S_{2n}(n,n+1))\circ(0,0)) \\
  &\eqByM{\eqref{eq:F-D-isomorphic},\eqref{eq:S-D-isomorphic}}
    \big(D_{2n}^{=0}(n+1) \cup D_{2n}^{>0}(n+1)\big)\circ(\searrow,\searrow) \cup
    D_{2n}^{=0}(n)\circ(\nearrow,\searrow)
  \eqBy{eq:D2np2-eq0-m-partition} D_{2n+2}^{=0}(n+1) \enspace,
\end{split}
\end{equation*}
where we used the induction hypothesis in the second step.
In a similar fashion we obtain
\begin{equation*}
\begin{split}
  \varphi(& S_{2n+2}(n+1,n+2)) \\
  &\eqBy{eq:ind-step2-S}
    \varphi(S_{2n}(n+1,n+2)\circ(0,0)) \cup
    \varphi(S_{2n}(n,n+1)\circ(1,0)) \cup
    \varphi(S_{2n}(n,n+1)\circ(0,1)) \\
  &\eqBy{eq:S-D-isomorphic}
    D_{2n}^{>0}(n+2)\circ(\searrow,\searrow) \cup
    D_{2n}^{>0}(n+1) \circ(\nearrow,\searrow) \cup
    D_{2n}^{>0}(n+1)\circ(\searrow,\nearrow)
  \eqBy{eq:D2np2-g0-m-partition} D_{2n+2}^{>0}(n+2)
\end{split}
\end{equation*}
and
\begin{equation*}
\begin{split}
  \varphi(& L_{2n+2}(n+1,n+2)) \\
  &\eqBy{eq:ind-step2-L}
    \varphi(L_{2n}(n+1,n+2)\circ(0,0)) \cup
    \varphi(L_{2n}(n,n+1)\circ(1,0)) \cup
    \varphi(F_{2n}(n,n+1)\circ(0,1)) \\
  &\eqByM{\eqref{eq:F-D-isomorphic},\eqref{eq:L-D-isomorphic}}
    D_{2n}^-(n+1)\circ(\searrow,\searrow) \cup
    D_{2n}^-(n)\circ(\nearrow,\searrow) \cup
    D_{2n}^{=0}(n)\circ(\searrow,\nearrow)
  \eqBy{eq:D2np2-m-m-partition} D_{2n+2}^-(n+1) \enspace.
\end{split}
\end{equation*}
This completes the proof.
\end{proof}

\subsection{Proof of Lemma~\texorpdfstring{\ref{lemma:FL-invariant}}{1}}

We introduce the abbreviations
\begin{subequations} \label{eq:FSL-abbreviation}
\begin{align}
  F_{2n}(k,k+1) &:= F(\cP_{2n}(k,k+1)) \enspace, \\
  S_{2n}(k,k+1) &:= S(\cP_{2n}(k,k+1)) \enspace, \\
  L_{2n}(k,k+1) &:= L(\cP_{2n}(k,k+1))
\end{align}
\end{subequations}
for the sets of first, second and last vertices of the oriented paths in the families $\cP_{2n}(k,k+1)$ arising in our construction.

\begin{proof}
Observe that the sets $F_{2n}(k,k+1)$, $S_{2n}(k,k+1)$ and $L_{2n}(k,k+1)$ defined in \eqref{eq:FSL-abbreviation} satisfy exactly the recursive relations in \eqref{eq:ind-base-FSL}, \eqref{eq:ind-step1-FSL} and \eqref{eq:ind-step2-FSL} (recall that those sets are independent of the parameter sequence $(\alpha_{2i})_{i\geq 1}$ used in our construction): This can be seen by comparing \eqref{eq:ind-base-P} with \eqref{eq:ind-base-FSL}, \eqref{eq:ind-step1-P} with \eqref{eq:ind-step1-FSL} and finally \eqref{eq:ind-step2-P} with \eqref{eq:ind-step2-FSL}, in the last step also using \eqref{eq:new-paths-FSL}.

We may thus apply Lemma~\ref{lemma:FSL-D-isomorphic}, and using the relations \eqref{eq:F-D-isomorphic} and \eqref{eq:L-D-isomorphic} for $k=n$, we obtain that proving \eqref{eq:FL-invariant} is equivalent to proving that the mapping $f_{\alpha_{2n}}$ defined in \eqref{eq:f-alpha-paths} satisfies
\begin{equation*}
  f_{\alpha_{2n}}(D_{2n}^{=0}(n))=D_{2n}^{=0}(n) \quad\text{and}\quad f_{\alpha_{2n}}(D_{2n}^-(n))=D_{2n}^-(n) \enspace,
\end{equation*}
which is exactly the assertion of Lemma~\ref{lemma:dyck-paths-invariant}.
\end{proof}

\begin{remark} \label{remark:use-lemma-FSL-D}
Using the abbreviations defined in \eqref{eq:FSL-abbreviation} we may and will from now on use Lemma~\ref{lemma:FSL-D-isomorphic} as a statement about the sets of first, second and last vertices of the oriented paths in the families $\cP_{2n}(k,k+1)$ arising in our construction (rather than as a statement about abstractly defined sets of bitstrings).
\end{remark}

\begin{remark}
It is not hard to deduce from the proof of Lemma~\ref{lemma:FL-invariant} and Lemma~\ref{lemma:dyck-paths-invariant} that the mappings $f_{\alpha_{2n}}$ defined in \eqref{eq:f-alpha} and parametrized by $\alpha_{2n}\in\{0,1\}^{n-1}$ are in fact the only isomorphisms between the graphs $Q_{2n}(n,n+1)$ and $Q_{2n}(n-1,n)$ that satisfy the invariance condition in \eqref{eq:FL-invariant} which is crucial for our construction. In fact, these mappings are even the only isomorphisms satisfying the slightly weaker invariance condition
\begin{equation*}
  f_{\alpha_{2n}}\big(F(\cP_{2n}(n,n+1))\cup L(\cP_{2n}(n,n+1))\big)=F(\cP_{2n}(n,n+1))\cup L(\cP_{2n}(n,n+1))
\end{equation*}
which could potentially also be exploited for the construction. In this sense our parametrization already captures the maximum possible freedom inherent in the construction.
\end{remark}

\section{How different are the 2-factors from different parameter sequences?}
\label{sec:difference-2-factors}

In this section we first show that different parameter sequences used in our construction indeed yield different 2-factors in the middle layer of $Q_{2n+1}$ (Theorem~\ref{thm:different-2-factors} below). We then consider the question which 2-factors obtained from our construction are mapped onto each other under automorphisms of $Q_{2n+1}(n,n+1)$ (Proposition~\ref{prop:automorphisms} below).

\begin{theorem} \label{thm:different-2-factors}
For any $n\geq 1$ and any two different parameter sequences $(\alpha_{2i})_{1\leq i\leq n}$, $(\alpha_{2i}')_{1\leq i\leq n}$, $\alpha_{2i},\alpha_{2i}'\in\{0,1\}^{i-1}$, the 2-factors $\cC_{2n+1}$ and $\cC_{2n+1}'$ defined in Section~\eqref{sec:construction} for these parameter sequences, respectively, are different subgraphs of $Q_{2n+1}(n,n+1)$.
\end{theorem}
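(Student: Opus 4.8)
The plan is to prove that the map $(\alpha_{2i})_{1\le i\le n}\mapsto\cC_{2n+1}$ is injective by combining two facts. First, a \emph{recoverability} statement: the smaller 2-factor $\cC_{2n-1}$ can be read off from $\cC_{2n+1}$, so that iterating, $\cC_{2n+1}$ determines $\cC_{2m+1}$ for every $1\le m\le n$. Second, a \emph{single-step sensitivity} statement: if two sequences agree on $\alpha_2,\dots,\alpha_{2m-2}$ but differ in $\alpha_{2m}$, then $\cC_{2m+1}\ne\cC_{2m+1}'$. Granting both, suppose two sequences differ and let $m$ be the smallest index with $\alpha_{2m}\ne\alpha_{2m}'$. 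Sensitivity (applied at level $m$) gives $\cC_{2m+1}\ne\cC_{2m+1}'$, and recoverability forces $\cC_{2n+1}\ne\cC_{2n+1}'$, as desired.

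For recoverability I would restrict $\cC_{2n+1}$ to the block of vertices whose last two bits are $(1,0)$. Unrolling $\cP_{2n}(n,n+1)$ one step via \eqref{eq:new-paths}--\eqref{eq:ind-step2-P} shows that $\cP_{2n}(n,n+1)\circ(0)$ contains $\cC_{2n-1}^-\circ(1,0)$, and a short case check over the three parts of \eqref{eq:2-factor} (the $\circ(1)$-part lives in the last-bit-$1$ block, while each edge of $M_{2n+1}^{FL}$ and of $M_{2n}^S\circ(0)$ has exactly one endpoint in the $(1,0)$-block) shows these are the \emph{only} edges of $\cC_{2n+1}$ with both endpoints in the block. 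Stripping the last two bits thus returns $\cC_{2n-1}^-$; since the deleted edges $\{(F(P),S(P))\circ(0)\}$ are parameter-independent (the pairs $(F(P),S(P))$ do not depend on $(\alpha_{2i})$), adding them back recovers $\cC_{2n-1}$ itself.

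For single-step sensitivity, the $\circ(0)$-part of \eqref{eq:2-factor} and the matching $M_{2n+1}^{FL}$ coincide for $\alpha_{2n}$ and $\alpha_{2n}'$, so it suffices to show $f_{\alpha_{2n}}(\cP)\ne f_{\alpha_{2n}'}(\cP)$ for the fixed family $\cP:=\cP_{2n}(n,n+1)$. Writing $f_\alpha=\ol{\rev}\circ\pi_\alpha$ and cancelling the bijection $\ol{\rev}$, this reduces to $\pi_\beta(\cP)\ne\cP$ as edge sets, where $\beta:=\alpha_{2n}\oplus\alpha_{2n}'\ne0$ and $\pi_\beta$ is the coordinate-pair swap. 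The key observation is that each first vertex $F(P)$ has degree $1$ in $\cP$, its unique incident edge being $(F(P),S(P))$; since $\ol{\rev}$ and $f_\beta$ both preserve $D_{2n}^{=0}(n)$ (the latter by Lemma~\ref{lemma:dyck-paths-invariant} applied to the parameter $\beta$), so does $\pi_\beta=\ol{\rev}\circ f_\beta$, whence $\pi_\beta(F(\cP))=F(\cP)$ by Lemma~\ref{lemma:FSL-D-isomorphic}. If $\pi_\beta(\cP)=\cP$, then $\pi_\beta$ must map the unique edge at each $F(P)$ to the unique edge at the degree-$1$ vertex $\pi_\beta(F(P))$, forcing $\pi_\beta(S(\cP))=S(\cP)$, i.e.\ $\pi_\beta(D_{2n}^{>0}(n+1))=D_{2n}^{>0}(n+1)$. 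I would contradict this by exhibiting an escaping path: for $i_0:=\min\{i:\beta(i)=1\}$, build $p\in D_{2n}^{>0}(n+1)$ with a height-$2$ peak at abscissa $2i_0$ and only locally monotone steps at every other swapped pair; then $\pi_\beta(p)$ lowers that peak to an interior touch of $y=0$, so $\pi_\beta(p)\in D_{2n}^{=0}(n+1)$, not in $D_{2n}^{>0}(n+1)$.

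The main obstacle I anticipate is producing this escaping path for \emph{every} nonzero $\beta$: in the extreme case $\beta=(1,\dots,1)$ all interior pairs are swapped, so one must route the remaining downsteps using only \textsf{UU}/\textsf{DD} blocks (which leave every even abscissa locally monotone, placing the single turnaround peak at an odd abscissa) while staying strictly positive and ending at height $2$. A short count, splitting on the parity of $n$, confirms such a path always exists. The degree/first-edge argument and the block bookkeeping in the recoverability step are then routine.
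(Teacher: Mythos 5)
Your proposal is correct, and both of its halves take a genuinely different route from the paper, which runs the cross-level transfer in the opposite direction. For that step the paper reconstructs nothing: it first treats sequences differing only in the last entry, exhibits two differing edges in $\cC_{2n+1}$ and $\cC_{2n+1}'$, and then tracks this one differing edge pair \emph{upward} through \eqref{eq:new-paths} and \eqref{eq:ind-step2-P} (the two families $\cP_{2n+2}'$ contain paths with the same last edge $e\circ(1)$ but different second-to-last edges), arguing that the difference survives all further construction steps. Your recoverability statement --- restricting $\cC_{2n+1}$ to the $(1,0)$-block returns $\cC_{2n-1}^-$, and the removed edges $(F(P),S(P))\circ(0)$ are parameter-independent, so $\cC_{2n-1}$ itself can be read off --- is a stronger and arguably cleaner invariant resting on the same block-disjointness bookkeeping; one small imprecision: an edge of $M_{2n+1}^{FL}$ whose endpoint in $B_{2n}(n)\circ(0)$ comes from an $L$-vertex ending in $0$ has \emph{no} endpoint in the $(1,0)$-block rather than exactly one, but all you need is that no such edge has both endpoints there, which holds. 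For single-step sensitivity the paper's argument is structurally parallel to yours (it too uses, implicitly, that the edge following a matching edge of $M_{2n+1}^{FL}$ on a cycle is the image of a first edge $(F(P),S(P))$), but instead of your reduction to the $\pi_\beta$-invariance of $D_{2n}^{>0}(n+1)$ plus a $\beta$-dependent escaping path, it fixes the single universal witness $S(P)=(1)\circ(1,0)^{n-1}\circ(1)$: this lattice path has a height-$2$ peak at \emph{every} even abscissa, so swapping \emph{any} nonempty set of pairs produces a touch of $y=0$, i.e.\ the image escapes $D_{2n}^{>0}(n+1)$ for every nonzero $\beta=\alpha_{2n}\oplus\alpha_{2n}'$ simultaneously. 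Adopting this witness would let you delete your last paragraph: the parity count, and the extra care needed for general $\beta$ (your sketch fully verifies only the all-ones case; for arbitrary $\beta$ the swapped pairs sit at prescribed positions, so the up-up/down-down blocks must be assigned --- e.g.\ up-up to the earliest swapped pairs, down-down to the latest, unswapped pairs up-down --- to keep the path strictly positive) become unnecessary. As it stands your sketch is completable along exactly those lines, so this is a simplification rather than a repair.
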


\begin{proof}
For the reader's convenience, Figure~\ref{fig:different-2-factors} illustrates the notations used in the proof.

\begin{figure}
\centering
\PSforPDF{
 \psfrag{q2n1}{\Large $Q_{2n}\circ(0)$}
 \psfrag{q2n2}{\Large $Q_{2n}\circ(1)$}
 \psfrag{q2n3}{\Large $Q_{2n+1}$}
 \psfrag{m1}{$M_{2n+2}^S$}
 \psfrag{m2}{$M_{2n+1}^{FL}$}
 \psfrag{p2n2}{$P_{2n}(n,n+1)\circ(0)$}
 \psfrag{bnp1}{$B_{2n}(n)\circ(0)$}
 \psfrag{bnp2}{$B_{2n}(n)\circ(1)$}
 \psfrag{e}{$e$}
 \psfrag{c}{$C$}
 \psfrag{cp}{$C'$}
 \psfrag{v}{$v=f_{\alpha_{2n}}(F(P))\circ(1)$}
 \psfrag{fp1}{$f_{\alpha_{2n}}(P)\circ(0)$}
 \psfrag{fp2}{$f_{\alpha_{2n}}(P)\circ(1)$}
 \psfrag{fpp1}{$f_{\alpha_{2n}'}(P')\circ(0)$}
 \psfrag{fpp2}{$f_{\alpha_{2n}'}(P')\circ(1)$}
 \psfrag{fpp2n21}{$f_{\alpha_{2n}'}(\cP_{2n}(n,n+1))\circ(0)$}
 \psfrag{fpp2n22}{$f_{\alpha_{2n}'}(\cP_{2n}(n,n+1))\circ(1)$}
 \includegraphics{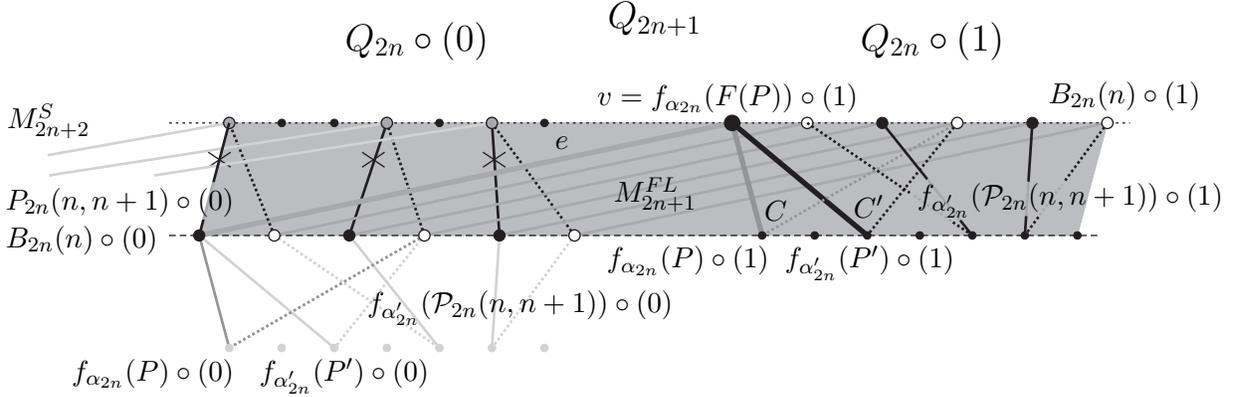}
}
\caption{Notations used in the proof of Theorem~\ref{thm:different-2-factors}.}
\label{fig:different-2-factors}
\end{figure}

We first assume that the sequences $(\alpha_{2i})_{1\leq i\leq n}$ and $(\alpha_{2i}')_{1\leq i\leq n}$ differ only in their respective last entry $\alpha_{2n}$ and $\alpha_{2n}'$, and show that the resulting 2-factors $\cC_{2n+1}$ and $\cC_{2n+1}'$ are different. We then show that this difference propagates through all further construction steps (when both parameter sequences are extended arbitrarily), which is enough to prove the statement of the lemma in full generality.

So let $\cP_{2n}(k,k+1)$, $k=n,n+1,\ldots,2n-1$, denote the families of oriented paths as defined by our construction from Section~\ref{sec:construction} for the parameter sequence $(\alpha_{2i})_{1\leq i\leq n-1}=(\alpha_{2i}')_{1\leq i\leq n-1}$. By Lemma~\ref{lemma:FSL-D-isomorphic} we have $\varphi(S(\cP_{2n}(n,n+1)))=D_{2n}^{>0}(n+1)$ (recall Remark~\ref{remark:use-lemma-FSL-D}), implying that there is a path $P\in\cP_{2n}(n,n+1)$ with $S(P)=(1)\circ(1,0)^{n-1}\circ(1)$. Note that $P$ satisfies the condition
\begin{equation} \label{eq:images-S-P}
  f_{\alpha_{2n}}(S(P))\notin f_{\alpha_{2n}'}(S(\cP_{2n}(n,n+1)))
\end{equation}
(recall the definition in \eqref{eq:f-alpha}).
Consider the vertex $v:=f_{\alpha_{2n}}(F(P))\circ(1)\in B_{2n}(n)\circ(1)$ and let $C$ and $C'$ denote the cycles from $\cC_{2n+1}$ or $\cC_{2n+1}'$, respectively, that contain the edge $e$ from $M_{2n+1}^{FL}$ which ends at $v$ (cf.~\eqref{eq:2-factor}). So $C$ and $C'$ share the edge $e$, the next edge on $C$ incident to $v$ is $f_{\alpha_{2n}}((F(P),S(P)))\circ(1)$, and the next edge on $C'$ incident to $v$ is given by $f_{\alpha_{2n}'}((F(P'),S(P')))\circ(1)$ for some $P'\in\cP_{2n}(n,n+1)$ with $f_{\alpha_{2n}'}(F(P'))\circ(1)=v$ (recall \eqref{eq:FL-invariant}). But by \eqref{eq:images-S-P} those edges are different in $C$ and $C'$, proving that $\cC_{2n+1}$ and $\cC_{2n+1}'$ are different subgraphs of $Q_{2n+1}(n,n+1)$.

Suppose the 2-factors $\cC_{2n+1}$ and $\cC'_{2n+1}$ are used for further construction steps by splitting them up as described in Section~\ref{sec:construction}, and consider the respective families of oriented paths defined in \eqref{eq:new-paths}. Note that both of these families contain an oriented path whose last edge is $e\circ(1)$, but whose second to last edge is different, namely $f_{\alpha_{2n}}((F(P),S(P)))\circ(1,1)$ and $f_{\alpha_{2n}'}((F(P'),S(P')))\circ(1,1)$, respectively. By the definition in \eqref{eq:ind-step2-P} this difference propagates to the path families in the layer $Q_{2n+2}(n+1,n+2)$, and hence also through all further construction steps (regardless of how the two parameter sequences are extended).
\end{proof}

The next proposition identifies pairs of parameter sequences for which the resulting 2-factors are mapped onto each other under automorphisms of $Q_{2n+1}(n,n+1)$ (in particular, the number and lengths of the cycles in each of the 2-factors are the same). Note however that even if no such automorphism exists, the number and/or the lengths of the cycles in certain 2-factors from our construction could nevertheless be the same.

\begin{proposition} \label{prop:automorphisms}
Let $n\geq 1$ and let $(\alpha_{2i})_{1\leq i\leq n}$ and $(\alpha_{2i}')_{1\leq i\leq n}$, $\alpha_{2i},\alpha_{2i}'\in\{0,1\}^{i-1}$, be two parameter sequences satisfying $\alpha_{2i}=\alpha_{2i}'$ for all $1\leq i\leq n-1$ and $\rev(\alpha_{2n})=\alpha_{2n}'$. Let $\cC_{2n+1}$ and $\cC_{2n+1}'$ denote the 2-factors defined in Section~\ref{sec:construction} for these parameter sequences, respectively.
Then the mapping $\tau_{\alpha_{2n}'}:B_{2n+1}\rightarrow B_{2n+1}$, defined by
\begin{equation} \label{eq:tau}
  (x_1,\ldots,x_{2n},x_{2n+1})\mapsto \big(f_{\alpha_{2n}'}(x_1,\ldots,x_{2n}),\ol{x_{2n+1}}\big) \enspace,
\end{equation}
where $f_{\alpha_{2n}'}$ is defined in \eqref{eq:f-alpha}, is an automorphism of $Q_{2n+1}(n,n+1)$ that maps $\cC_{2n+1}$ onto $\cC_{2n+1}'$.
\end{proposition}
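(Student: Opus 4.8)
The proof naturally splits into two parts: first, that $\tau_{\alpha_{2n}'}$ is an automorphism of $Q_{2n+1}(n,n+1)$; second, that it carries the three building blocks of the 2-factor $\cC_{2n+1}$ in \eqref{eq:2-factor} onto the corresponding blocks of $\cC_{2n+1}'$.

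For the automorphism claim I would observe that $f_{\alpha_{2n}'}$ is, by \eqref{eq:f-alpha}, a composition of a coordinate permutation, a reversal and a bitwise complementation, each of which preserves the Hamming distance on $B_{2n}$; complementing the final coordinate independently preserves Hamming distance as well. Hence $\tau_{\alpha_{2n}'}$ is a Hamming-distance-preserving bijection of $B_{2n+1}$, i.e.\ an automorphism of the full cube $Q_{2n+1}$. A quick count shows the effect on the number of ones: complementation turns $k$ ones among the first $2n$ bits into $2n-k$ ones and the last bit is flipped, so a bitstring with $m$ ones is mapped to one with $2n+1-m$ ones. Thus $\tau_{\alpha_{2n}'}$ interchanges the classes $B_{2n+1}(n)$ and $B_{2n+1}(n+1)$ and restricts to an automorphism of the middle layer graph.

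The heart of the argument is the algebraic identity $f_{\alpha_{2n}'}=f_{\alpha_{2n}}^{-1}$, which is precisely what the hypothesis $\rev(\alpha_{2n})=\alpha_{2n}'$ is engineered to supply. Writing $c$ for the complementation $x\mapsto\ol{x}$ and $r$ for the reversal $x\mapsto\rev(x)$, so that $f_{\alpha_{2n}}=c\circ r\circ\pi_{\alpha_{2n}}$, I would use three facts on $B_{2n}$: the involutions $c$ and $r$ commute with each other and each commutes with every coordinate permutation, and --- the only nontrivial point --- $r\circ\pi_{\alpha_{2n}}\circ r=\pi_{\rev(\alpha_{2n})}$. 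The last identity holds because reversal maps the adjacent position pair $(2i,2i+1)$ onto $(2(n-i),2(n-i)+1)$, converting the swap controlled by $\alpha_{2n}(i)$ into the swap at pair $n-i$, while the fixed positions $1$ and $2n$ remain fixed. Inverting $f_{\alpha_{2n}}=c\circ r\circ\pi_{\alpha_{2n}}$ (all three factors are involutions) and commuting $c$ to the front then yields $f_{\alpha_{2n}}^{-1}=c\circ r\circ\pi_{\rev(\alpha_{2n})}=f_{\rev(\alpha_{2n})}=f_{\alpha_{2n}'}$.

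The remaining step is bookkeeping. Since the two sequences agree except in the last entry, the family $\cP_{2n}(n,n+1)$ and the matching $M_{2n+1}^{FL}$ are common to $\cC_{2n+1}$ and $\cC_{2n+1}'$. Applying $\tau_{\alpha_{2n}'}$ blockwise, $\cP_{2n}(n,n+1)\circ(0)$ is sent to $f_{\alpha_{2n}'}(\cP_{2n}(n,n+1))\circ(1)$, the second block of $\cC_{2n+1}'$; the block $f_{\alpha_{2n}}(\cP_{2n}(n,n+1))\circ(1)$ is sent to $f_{\alpha_{2n}'}(f_{\alpha_{2n}}(\cP_{2n}(n,n+1)))\circ(0)=\cP_{2n}(n,n+1)\circ(0)$ by the identity just proved, the first block of $\cC_{2n+1}'$; and each matching edge $\{w\circ(0),w\circ(1)\}$ of $M_{2n+1}^{FL}$ goes to $\{f_{\alpha_{2n}'}(w)\circ(1),f_{\alpha_{2n}'}(w)\circ(0)\}$, which again lies in $M_{2n+1}^{FL}$ because $f_{\alpha_{2n}'}$ fixes the sets $F(\cP_{2n}(n,n+1))$ and $L(\cP_{2n}(n,n+1))$ setwise (Lemma~\ref{lemma:FL-invariant} for $\alpha_{2n}'$), so it permutes $M_{2n+1}^{FL}$ bijectively. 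As these blocks partition the edge set of each 2-factor, this gives $\tau_{\alpha_{2n}'}(\cC_{2n+1})=\cC_{2n+1}'$. The main obstacle is the conjugation identity $r\circ\pi_{\alpha_{2n}}\circ r=\pi_{\rev(\alpha_{2n})}$: once $f_{\alpha_{2n}'}=f_{\alpha_{2n}}^{-1}$ is in hand everything else is routine, but one must carefully track how reversal reindexes the position pairs and verify that the endpoints $1$ and $2n$ are handled correctly.
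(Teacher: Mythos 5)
Your proof is correct, and it hinges on exactly the same key fact as the paper's proof: the hypothesis $\rev(\alpha_{2n})=\alpha_{2n}'$ forces $f_{\alpha_{2n}'}\circ f_{\alpha_{2n}}$ to be the identity on $B_{2n}$. The difference is in how the image statement is then verified. The paper argues cycle by cycle: it writes each cycle $C\in\cC_{2n+1}$ in the alternating form \eqref{eq:cycle}, applies $\tau_{\alpha_{2n}'}$, and uses the identity to read off that the image is again an alternating cycle of the form prescribed by \eqref{eq:2-factor} for the primed sequence; the matching edges of $M_{2n+1}^{FL}$ are absorbed implicitly into this bookkeeping. You instead compare edge sets block by block in \eqref{eq:2-factor}, which makes the invariance of $M_{2n+1}^{FL}$ a separate, explicit step via Lemma~\ref{lemma:FL-invariant} applied to $f_{\alpha_{2n}'}$ --- arguably more transparent, and equally valid since equality of the edge sets of two $2$-regular spanning subgraphs is equality of the subgraphs. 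You also supply something the paper omits entirely: a proof of the key identity, via the conjugation relation $r\circ\pi_{\alpha_{2n}}\circ r=\pi_{\rev(\alpha_{2n})}$, where the paper only asserts that $f_{\alpha_{2n}'}\circ f_{\alpha_{2n}}$ is the identity. One blemish: among your ``three facts'' you claim that the reversal $r$ commutes with every coordinate permutation. That is false, and it even contradicts your own conjugation identity (commutation would give $r\circ\pi_{\alpha_{2n}}\circ r=\pi_{\alpha_{2n}}$, not $\pi_{\rev(\alpha_{2n})}$). Fortunately your derivation never uses this claim --- it only needs that complementation commutes with $r$ and with all coordinate permutations, that all three factors are involutions, and the conjugation identity --- so the proof stands once that clause is deleted or corrected to refer to $c$ alone.
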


For any parameter sequence $(\alpha_{2i})_{1\leq i\leq n}$ with $\rev(\alpha_{2n})=\alpha_{2n}$, Proposition~\ref{prop:automorphisms} implies that the mapping $\tau_{\alpha_{2n}}$ is an automorphism of $Q_{2n+1}(n,n+1)$ that maps the 2-factor $\cC_{2n+1}$ defined for this parameter sequence onto itself.

\begin{proof}
The mapping $\tau_{\alpha_{2n}'}$ is clearly an automorphism of $Q_{2n+1}(n,n+1)$ (this mapping just permutes and inverts the bits). It remains to show that $\tau_{\alpha_{2n}'}$ maps $\cC_{2n+1}$ onto $\cC_{2n+1}'$.

By the definition in \eqref{eq:2-factor} every cycle $C\in\cC_{2n+1}$ has the form
\begin{equation} \label{eq:cycle}
  C=\big(P^1\circ(0),f_{\alpha_{2n}}(\wh{P}^1)\circ(1),P^2\circ(0),f_{\alpha_{2n}}(\wh{P}^2)\circ(1),\ldots,P^k\circ(0),f_{\alpha_{2n}}(\wh{P}^k)\circ(1)\big) \enspace,
\end{equation}
for oriented paths $P^1,\ldots,P^k,\wh{P}^1,\ldots,\wh{P}^k\in\cP_{2n}(n,n+1)$, where for all $i=1,\ldots,k$ the vertices of each subpath $P^i\circ(0)\seq Q_{2n}(n,n+1)\circ(0)$ are visited in the order given by the orientation of $P^i$ and the vertices of each subpath $f_{\alpha_{2n}}(\wh{P}^i)\circ(1)\seq Q_{2n}(n-1,n)\circ(1)$ are visited in the order opposite to the orientation of $\wh{P}^i$.
Using that by the definition in \eqref{eq:f-alpha} and the assumption $\rev(\alpha_{2n})=\alpha_{2n}'$ the mapping $f_{\alpha_{2n}'}\circ f_{\alpha_{2n}}$ is the identity mapping, we obtain from \eqref{eq:tau} and \eqref{eq:cycle} that
\begin{equation*}
  \tau_{\alpha_{2n}'}(C)=\big(f_{\alpha_{2n}'}(P^1)\circ(1),\wh{P}^1\circ(0),f_{\alpha_{2n}'}(P^2)\circ(1),\wh{P}^2\circ(0),\ldots,f_{\alpha_{2n}'}(P^k)\circ(1),\wh{P}^k\circ(0)\big) \enspace,
\end{equation*}
which by the definition in \eqref{eq:2-factor} is a cycle in $\cC_{2n+1}'$.
\end{proof}

\begin{remark}
Computer experiments suggest that apart from the automorphisms mentioned in Proposition~\ref{prop:automorphisms}, there are no other nontrivial automorphisms of $Q_{2n+1}(n,n+1)$ that map certain 2-factors from our construction onto each other, with the following exceptions: The 2-factor $\cC_5$ in $Q_5(2,3)$ obtained for the parameter sequence $\alpha_2=()$, $\alpha_4=(1)$ is mapped onto itself under six additional automorphisms of $Q_5(2,3)$ (apart from the trivial one and the one given by Proposition~\ref{prop:automorphisms}). Furthermore, the 2-factor $\cC_{2n+1}$ in $Q_{2n+1}(n,n+1)$ obtained for the parameter sequence $(\alpha_{2i})_{1\leq i\leq n}$, $\alpha_{2i}=(0,0,\ldots,0)\in\{0,1\}^{i-1}$ is mapped onto itself under all $2(2n+1)$ automorphisms given by bit shifts and bit shifts plus reversal and inversion.
\end{remark}

\section{The number and lengths of cycles in the 2-factor}
\label{sec:structure-2-factor}

In this section we investigate the number and lengths of the cycles in the 2-factor $\cC_{2n+1}$ defined in Section~\eqref{sec:construction}. We identify a few properties that hold for \emph{any} choice of the parameter sequence $(\alpha_{2i})_{i\geq 1}$ (Theorem~\ref{thm:all-alpha-cycles} below) and then focus on \emph{one particular} parameter sequence, namely $\alpha_{2i}=(0,0,\ldots,0)\in\{0,1\}^{i-1}$ for all $i\geq 1$, for which the resulting 2-factor has several nice combinatorial properties related to plane trees (Theorem~\ref{thm:alpha0-cycles} below).

As we have seen in Section~\ref{sec:correctness}, in order to understand why our inductive construction indeed works as claimed, we only needed to consider the sets of first, second and last end vertices of the paths in the families $\cP_{2n}(k,k+1)$ (and could neglect all the other vertices on these paths). Note also that so far we did not use any knowledge about which of those vertices actually lie on the same paths. This knowledge will however be crucial in the following.

\subsection{Subpaths of lattice paths}

We begin by extending some of the notation introduced in Section~\ref{sec:lattice-paths}.

For any lattice path $p\in P_n$ and any two abscissas $0\leq x\leq x'\leq n$ we define $p[x,x']$ as the subpath of $p$ between (and including) the abscissas $x$ and $x'$.

For any lattice path $p$ in one of the sets $D_{2n}^{=0}(k)$, $D_{2n}^{>0}(k+1)$ and $D_{2n}^-(k)$, $k=n,n+1,\ldots,2n-1$, we define disjoint subpaths $\ell(p)$ and $r(p)$ of $p$ that cover all but two steps of $p$ as follows (see Figure~\ref{fig:subpaths}):
\begin{subequations} \label{eq:def-ell-r}
\begin{itemize}
\item
If $p\in D_{2n}^{=0}(k)$ we define
\begin{equation} \label{eq:def-ell-r-F}
  \ell(p):=p[1,x-1] \quad \text{and} \quad r(p):=p[x,2n] \enspace,
\end{equation}
where $x$ is the smallest strictly positive abscissa where $p$ touches the $y$-axis.

\item
If $p\in D_{2n}^{>0}(k+1)$ we define
\begin{equation} \label{eq:def-ell-r-S}
  \ell(p):=p[1,x] \quad \text{and} \quad r(p):=p[x+1,2n] \enspace,
\end{equation}
where $x$ is the largest abscissa where $p$ touches the line $y=1$ (the first time $p$ touches it is at $(1,1)$).

\item
If $p\in D_{2n}^-(k)$ we define
\begin{equation} \label{eq:def-ell-r-L}
  \ell(p):=p[0,x-1] \quad \text{and} \quad r(p):=p[x+1,2n] \enspace,
\end{equation}
where $x$ is the abscissa where $p$ touches the line $y=-1$.
\end{itemize}
\end{subequations}

With those definitions, depending on whether $p$ is contained in the set $D_{2n}^{=0}(k)$, $D_{2n}^{>0}(k+1)$ or $D_{2n}^-(k)$, we have
\begin{subequations}
\begin{align}
  p &= (\nearrow)\circ\ell(p)\circ(\searrow)\circ r(p) \enspace, \label{eq:ell-r-F-partition} \\
  p &= (\nearrow)\circ\ell(p)\circ(\nearrow)\circ r(p) \enspace, \label{eq:ell-r-S-partition} \\
  p &= \ell(p)\circ(\searrow,\nearrow)\circ r(p) \enspace, \notag 
\end{align}
\end{subequations}
respectively. In all cases, the subpath $\ell(p)$ starts and ends at the same ordinate and never moves below this ordinate in between. Furthermore, the ordinate of the endpoint of the subpath $r(p)$ is by $2(k-n)$ higher than the ordinate of its starting point and also this subpath never moves below the ordinate of its starting point.

\begin{figure}
\centering
\PSforPDF{
 \psfrag{z}{0}
 \psfrag{x}{$x$}
 \psfrag{2n}{$2n$}
 \psfrag{2k}[c][c][1][90]{\small $2(k-n)$}
 \psfrag{2kp}[c][c][1][90]{\small $2(k+1-n)$}
 \psfrag{Deq0}{$p\in D_{2n}^{=0}(k)$}
 \psfrag{Dgt0}{$p\in D_{2n}^{>0}(k+1)$}
 \psfrag{Dm}{$p\in D_{2n}^-(k)$}
 \psfrag{ellp}{$\ell(p)$}
 \psfrag{rp}{$r(p)$}
 \includegraphics{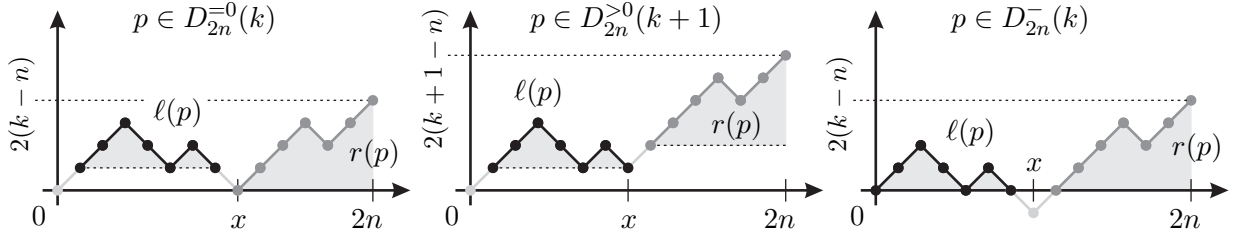}
}
\caption{Illustration of the definitions in \eqref{eq:def-ell-r}.}
\label{fig:subpaths}
\end{figure}

\subsection{Properties that are independent of the parameter sequence}

The next lemma relates the lattice paths $\varphi(F(P))$, $\varphi(S(P))$ and $\varphi(L(P))$ corresponding to the first, second and last vertex on each of the paths $P\in\cP_{2n}(k,k+1)$ arising in our construction and characterizes the length of $P$ by those lattice paths. In the following we will repeatedly use that by Lemma~\ref{lemma:FSL-D-isomorphic} those lattice paths satisfy $\varphi(F(P))\in D_{2n}^{=0}(k)$, $\varphi(S(P))\in D_{2n}^{>0}(k+1)$ and $\varphi(L(P))\in D_{2n}^-(k)$. Note that all statements of the next lemma hold independently of the parameter sequence $(\alpha_{2i})_{i\geq 1}$ chosen for the construction. In particular, the length of the paths in the families $\cP_{2n}(k,k+1)$ is independent of this parameter sequence (cf.\ the remarks in Section~\ref{sec:dependence-alpha}).

For any graph $G$ we denote by $e(G)$ the number of edges of $G$. Moreover, for any lattice path $p$ we denote by $|p|$ the number of steps of $p$.

\begin{lemma} \label{lemma:all-alpha-paths}
For any $n\geq 1$, the families of paths $\cP_{2n}(k,k+1)$, $k=n,n+1,\ldots,2n-1$, defined in Section~\ref{sec:construction} have the following properties: For any path $P\in\cP_{2n}(k,k+1)$, defining $p_F:=\varphi(F(P))\in D_{2n}^{=0}(k)$, $p_S:=\varphi(S(P))\in D_{2n}^{>0}(k+1)$ and $p_L:=\varphi(L(P))\in D_{2n}^-(k)$, we have
\begin{align}
  (\ell(p_F),r(p_F)) &= (\ell(p_S),r(p_S)) \enspace, \label{eq:FS-relation} \\
  (|\ell(p_S)|,|r(p_S)|) &= (|\ell(p_L)|,|r(p_L)|) \enspace, \label{eq:SL-length-relation} \\
  e(P) &= 2|\ell(p_F)|+2 \enspace, \label{eq:path-length}
\end{align}
where $\ell(p_F)$ and $r(p_F)$ are defined in \eqref{eq:def-ell-r-F}, $\ell(p_S)$ and $r(p_S)$ in \eqref{eq:def-ell-r-S}, and $\ell(p_L)$ and $r(p_L)$ in \eqref{eq:def-ell-r-L}.
\end{lemma}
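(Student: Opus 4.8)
The plan is to establish \eqref{eq:FS-relation}, \eqref{eq:SL-length-relation} and \eqref{eq:path-length} simultaneously by induction on $n$, following the inductive construction of the families $\cP_{2n}(k,k+1)$ from Section~\ref{sec:construction}. The base case $n=1$ is immediate: the unique path of $\cP_2(1,2)$ has $p_F=(\nearrow,\searrow)$, $p_S=(\nearrow,\nearrow)$ and $p_L=(\searrow,\nearrow)$, whose $\ell$- and $r$-parts are all empty, and $e(P)=2=2\cdot 0+2$.

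The engine of the induction step is a single observation about how the splittings \eqref{eq:def-ell-r} react to appending two steps. First I would record that whenever two steps are appended to a path $p$ in the manner dictated by the ``upper-layer'' partitions \eqref{eq:D2np2-*-u-partition}, \eqref{eq:D2np2-g0-u-partition}, or in the first two components of \eqref{eq:ind-step2-P} (namely $\cP_{2n}(n+1,n+2)\circ(0,0)$ and $\cP_{2n}(n,n+1)\circ(1,0)$), the distinguished abscissa defining the split sits at some $x\le 2n$ and is therefore untouched, so that $\ell(p\circ(s_1,s_2))=\ell(p)$ and $r(p\circ(s_1,s_2))=r(p)\circ(s_1,s_2)$. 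Verifying this reduces to checking, in the finitely many cases, that the two new steps do not bring the path back to the relevant line ($y=0$, $y=1$ or $y=-1$) before abscissa $2n$, which is forced by the height of the endpoint of $p$. Since \eqref{eq:ind-step1-P} and the first two terms of \eqref{eq:ind-step2-P} append the \emph{same} two bits to $F(P)$, $S(P)$ and $L(P)$, the relations \eqref{eq:FS-relation} and \eqref{eq:SL-length-relation} carry over verbatim from the inductive hypothesis, and since appending bits leaves $e(P)$ unchanged, so does \eqref{eq:path-length}.

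The genuinely new paths are those of the family $\cP_{2n+2}'$ from \eqref{eq:new-paths}, for which \eqref{eq:new-paths-FSL} gives $p_F'=p_S\circ(\searrow,\searrow)$, $p_S'=p_S\circ(\searrow,\nearrow)$ and $p_L'=\varphi(w)\circ(\searrow,\nearrow)$, where $p_S:=\varphi(S(P))\in D_{2n}^{>0}(n+1)$ for some $P\in\cP_{2n}(n,n+1)$ and $\varphi(w)\in D_{2n}^{=0}(n)$. Here I would compute the splittings by hand. As $p_S$ stays strictly positive on $[1,2n]$ and ends at height $2$, the split abscissa is $2n+2$ for $p_F'$ (its smallest positive return to $y=0$) and $2n+1$ for $p_S'$ (its last visit to $y=1$), but in both cases $\ell(\cdot)=p[1,2n+1]$, which by \eqref{eq:ell-r-S-partition} equals the common subpath $\ell(p_S)\circ(\nearrow)\circ r(p_S)\circ(\searrow)$, while the $r$-part is empty. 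This proves \eqref{eq:FS-relation} and, using $|\ell(p_S)|+|r(p_S)|=2n-2$ from \eqref{eq:ell-r-S-partition}, shows $|\ell(p_F')|=|\ell(p_S')|=2n$. Similarly $p_L'$ meets $y=-1$ only at $2n+1$, so $\ell(p_L')=\varphi(w)$ has length $2n$ and $r(p_L')$ is empty, which gives \eqref{eq:SL-length-relation}.

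The only real work is \eqref{eq:path-length} for these paths, i.e.\ showing $e(P')=4n+2$ to match $2|\ell(p_F')|+2$; this is where I expect the main difficulty. Since a path of $\cP_{2n+2}'$ is glued from pieces of the 2-factor $\cC_{2n+1}$, I would read its length off the structure of $\cC_{2n+1}^-$: using the cycle description \eqref{eq:cycle}, such a path consists of its initial edge of $M_{2n+2}^S$, the remainder of one path $P\circ(0)$ (contributing $e(P)-1$ edges), a crossing edge of $M_{2n+1}^{FL}$, one full path $f_{\alpha_{2n}}(\tilde P)\circ(1)$ (contributing $e(\tilde P)$ edges), and a final crossing edge, where $\tilde P$ is the unique path with $f_{\alpha_{2n}}(L(\tilde P))=L(P)$; hence $e(P')=e(P)+e(\tilde P)+2$. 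Now the inductive hypothesis for $\cP_{2n}(n,n+1)$, combining \eqref{eq:FS-relation}, \eqref{eq:SL-length-relation} and \eqref{eq:path-length}, yields $e(P)=2|\ell(\varphi(L(P)))|+2=2x$, where $x$ is the abscissa at which $\varphi(L(P))\in D_{2n}^-(n)$ meets $y=-1$ (so $|\ell|=x-1$ by \eqref{eq:def-ell-r-L}), and likewise $e(\tilde P)=2\tilde x$. The crux is the second part of Lemma~\ref{lemma:dyck-paths-invariant}: as $f_{\alpha_{2n}}$ sends the touching abscissa $\tilde x$ of $\varphi(L(\tilde P))$ to the touching abscissa $2n-\tilde x$ of its image $\varphi(L(P))$, we get $x=2n-\tilde x$, so that $e(P)+e(\tilde P)=2(x+\tilde x)=4n$ and $e(P')=4n+2$, completing the induction. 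The reflection identity $x'=2n-x$, which looked like a side remark in Lemma~\ref{lemma:dyck-paths-invariant}, is precisely what forces the two variable contributions $e(P)$ and $e(\tilde P)$ to sum to the \emph{constant} $4n$; this balancing is the heart of the argument.
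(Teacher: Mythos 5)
Your proposal is correct and follows essentially the same route as the paper's proof: induction on $n$ with the same case split between paths obtained by appending two bits (where one checks, using the endpoint heights and which bit-pairs actually occur, that the splitting abscissa in \eqref{eq:def-ell-r} is unchanged) and the new paths of $\cP_{2n+2}'$, the same length decomposition $e(P^+)=e(P)+e(\wh{P})+2$, and the same appeal to the second part of Lemma~\ref{lemma:dyck-paths-invariant} as the balancing mechanism. The only difference is bookkeeping: you track the touching abscissas $x$, $\tilde{x}$ and use $x+\tilde{x}=2n$, whereas the paper tracks the subpath lengths $|\ell(\cdot)|$, $|r(\cdot)|$ and uses the equivalent swap relation \eqref{length-pL-olpL}; these coincide since $|\ell(p_L)|=x-1$ and $|r(p_L)|=2n-x-1$.
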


With the equality in \eqref{eq:FS-relation} we mean that the step sequences of the lattice paths $\ell(p_F)$ and $\ell(p_S)$, and the step sequences of the lattice paths $r(p_F)$ and $r(p_S)$ are the same. The absolute coordinates of those subpaths of $p_F$ and $p_S$ might be different.

Note that by \eqref{eq:FS-relation} and \eqref{eq:SL-length-relation} the relation \eqref{eq:path-length} can also be written as $e(P)=2|\ell(p_F)|+2=2|\ell(p_S)|+2=2|\ell(p_L)|+2$.

\begin{proof}
We argue by induction over $n$. By the definition in \eqref{eq:ind-base-P}, for $n=1$ the families of paths $\cP_{2n}(k,k+1)$ consist only of a single family $\cP_2(1,2)$, which contains only a single path $P:=((1,0),(1,1),(0,1))$ ($P$ has two edges). We clearly have $p_F:=\varphi(F(P))=(\nearrow,\searrow)\in D_2^{=0}(1)$, $p_S:=\varphi(S(P))=(\nearrow,\nearrow)\in D_2^{>0}(2)$ and $p_L:=\varphi(L(P))=(\searrow,\nearrow)\in D_2^-(1)$, and by the definitions in \eqref{eq:def-ell-r} the subpaths $\ell(p_F)$, $r(p_F)$, $\ell(p_S)$, $r(p_S)$, $\ell(p_L)$ and $r(p_L)$ of those lattice paths all consist only of a single point (and zero steps), showing that all three claims of the lemma hold. This settles the induction basis.

For the induction step $n\rightarrow n+1$ let $n\geq 1$ be fixed. We consider a fixed path $P^+$ from one of the families $\cP_{2n+2}(k,k+1)$, $k=n+1,n+2,\ldots,2n+1$, and define the lattice paths $p_F^+:=\varphi(F(P^+))\in D_{2n+2}^{=0}(k)$, $p_S^+:=\varphi(S(P^+))\in D_{2n+2}^{>0}(k+1)$ and $p_L^+:=\varphi(L(P^+))\in D_{2n+2}^-(k)$.
By the definitions in \eqref{eq:ind-step1-P} and \eqref{eq:ind-step2-P}, $P^+$ is either contained in the set
\begin{equation} \label{eq:upper-paths}
  \cP_{2n}(n,n+1)\circ \{(1,0),(1,1)\} \cup \bigcup_{k'=n+1}^{2n-1} \cP_{2n}(k',k'+1)\circ\{(0,0),(1,0),(0,1),(1,1)\} 
\end{equation}
or in the set $\cP_{2n+2}'$ defined in \eqref{eq:new-paths} (in the latter case we have $k=n+1$).

We first consider the case that $P^+$ is contained in \eqref{eq:upper-paths}, i.e., $P^+$ is obtained from some path $P\in\cP_{2n}(k',k'+1)$, $n\leq k'\leq 2n-1$, by extending each vertex label of $P$ by two bits $x_1,x_2\in\{0,1\}$.
We know by induction that the lattice paths $p_F:=\varphi(F(P))\in D_{2n}^{=0}(k')$, $p_S:=\varphi(S(P))\in D_{2n}^{>0}(k'+1)$ and $p_L:=\varphi(L(P))\in D_{2n}^-(k')$ satisfy the relations
\begin{align}
  (\ell(p_F),r(p_F)) &= (\ell(p_S),r(p_S)) \enspace, \label{eq:ind-FS-relation} \\
  (|\ell(p_S)|,|r(p_S)|) &= (|\ell(p_L)|,|r(p_L)|) \enspace, \label{eq:ind-SL-length-relation} \\
  e(P) &= 2|\ell(p_F)|+2 \enspace. \label{eq:ind-path-length}
\end{align}
Moreover, we clearly have
\begin{subequations}
\begin{align}
  p_F^+ &= p_F\circ\varphi((x_1,x_2)) \enspace, \label{eq:pFp-pF} \\
  p_S^+ &= p_S\circ\varphi((x_1,x_2)) \enspace, \label{eq:pSp-pS} \\
  p_L^+ &= p_L\circ\varphi((x_1,x_2)) \enspace. \label{eq:pLp-pL}
\end{align}
\end{subequations}
Using \eqref{eq:pFp-pF} and the fact that $p_F$ is contained in the set $D_{2n}^{=0}(k')$, the definition in \eqref{eq:def-ell-r-F} yields
\begin{equation} \label{eq:ell-r-pFp-pF}
  (\ell(p_F^+),r(p_F^+)) = \big(\ell(p_F),r(p_F)\circ \varphi((x_1,x_2))\big) \enspace.
\end{equation}
Similarly, using \eqref{eq:pLp-pL} and the fact that $p_L$ is contained in the set $D_{2n}^-(k')$, the definition in \eqref{eq:def-ell-r-L} yields
\begin{equation} \label{eq:ell-r-pLp-pL}
  (\ell(p_L^+),r(p_L^+)) = \big(\ell(p_L),r(p_L)\circ \varphi((x_1,x_2))\big) \enspace.
\end{equation}
Using that $p_S\in D_{2n}^{>0}(k'+1)$, it follows that if $k'=n$, then the $y$-coordinate of the last point of $p_S$ is 2, whereas if $k'\geq n+1$, then the $y$-coordinate of the last point of $p_S$ is at least 4. Combined with \eqref{eq:upper-paths} and \eqref{eq:pSp-pS} it follows that the last two steps of $p_S^+$ do not move below the line $y=2$. By the definition in \eqref{eq:def-ell-r-S} and by \eqref{eq:pSp-pS} we therefore have
\begin{equation} \label{eq:ell-r-pSp-pS}
  (\ell(p_S^+),r(p_S^+)) = \big(\ell(p_S),r(p_S)\circ \varphi((x_1,x_2))\big) \enspace.
\end{equation}
Combining \eqref{eq:ind-FS-relation}, \eqref{eq:ell-r-pFp-pF} and \eqref{eq:ell-r-pSp-pS} yields $(\ell(p_F^+),r(p_F^+))=(\ell(p_S^+),r(p_S^+))$ and thus proves \eqref{eq:FS-relation}. Combining \eqref{eq:ind-SL-length-relation}, \eqref{eq:ell-r-pLp-pL} and \eqref{eq:ell-r-pSp-pS} yields $(|\ell(p_S^+)|,|r(p_S^+)|)=(|\ell(p_L^+)|,|r(p_L^+)|)$ and thus proves \eqref{eq:SL-length-relation}.
Using $e(P^+)=e(P)$ and \eqref{eq:ell-r-pFp-pF} we obtain from \eqref{eq:ind-path-length} that $e(P^+)=2|\ell(p_F^+)|+2$, proving \eqref{eq:path-length}.

We now consider the case that $P^+$ is contained in the set $\cP_{2n+2}'$. For the reader's convenience, Figure~\ref{fig:new-paths} illustrates the notations used in this part of the proof. By the definition in \eqref{eq:new-paths}, there are two (not necessarily distinct) paths $P,P'\in\cP_{2n}(n,n+1)$ with
\begin{subequations} \label{eq:FS-Pp-P}
\begin{align}
  F(P^+) &= S(P)\circ(0,0) \enspace, \\
  S(P^+) &= S(P)\circ(0,1) \enspace, \\
  L(P^+) &= F(P')\circ(0,1)
\end{align}
\end{subequations}
(cf.~\eqref{eq:new-paths-FSL}).
Defining $p_S:=\varphi(S(P))\in D_{2n}^{>0}(n+1)$ and $p_F':=\varphi(F(P'))\in D_{2n}^{=0}(n)$ we obtain from \eqref{eq:FS-Pp-P} that
\begin{subequations}
\begin{align}
  p_F^+ &= p_S\circ (\searrow,\searrow) \enspace, \label{eq:pFp-pS} \\
  p_S^+ &= p_S\circ (\searrow,\nearrow) \enspace, \label{eq:pSp-pS2} \\
  p_L^+ &= p_F'\circ (\searrow,\nearrow) \enspace. \label{eq:pLp-pFpr}
\end{align}
\end{subequations}
The lattice path $p_S\in D_{2n}^{>0}(n+1)$ clearly ends at $(2n,2)$.
From \eqref{eq:pFp-pS} it follows that $p_F^+\in D_{2n+2}^{=0}(n+1)$ and that the smallest strictly positive abscissa where this lattice path touches the $y$-axis is $2n+2$ (see Figure~\ref{fig:new-paths}). By the definition in \eqref{eq:def-ell-r-F} and by \eqref{eq:pFp-pS} we therefore have
\begin{equation} \label{eq:ell-r-pF+}
  (\ell(p_F^+),r(p_F^+))=\big(p_S[1,2n]\circ (\searrow),()\big)
\end{equation}
($r(p_F^+)$ consists only of a single point).
From \eqref{eq:pSp-pS2} it follows that $p_S^+\in D_{2n+2}^{>0}(n+2)$ and that the largest abscissa where this lattice path touches the line $y=1$ is $2n+1$. By the definition in \eqref{eq:def-ell-r-S} and by \eqref{eq:pSp-pS2} we therefore have
\begin{equation} \label{eq:ell-r-pS+}
  (\ell(p_S^+),r(p_S^+))=\big(p_S[1,2n]\circ (\searrow),()\big) \enspace,
\end{equation}
which together with \eqref{eq:ell-r-pF+} shows that \eqref{eq:FS-relation} also holds in this case.

From \eqref{eq:pLp-pFpr} it follows that $p_L^+\in D_{2n+2}^-(n+1)$ and that the only abscissa where this lattice path touches the line $y=-1$ is $2n+1$. By the definition in \eqref{eq:def-ell-r-L} and by \eqref{eq:pLp-pFpr} we therefore have
\begin{equation} \label{eq:ell-r-pL+}
  (\ell(p_L^+),r(p_L^+))=\big(p_F',()\big) \enspace.
\end{equation}
Together with \eqref{eq:ell-r-pS+} it follows that $(|\ell(p_S^+)|,|r(p_S^+)|)=(2n,0)=(|\ell(p_L^+)|,|r(p_L^+)|)$, proving \eqref{eq:SL-length-relation} in this case.

It remains to prove \eqref{eq:path-length} in this case.
Note that we have
\begin{equation} \label{eq:ell-pF+}
  \ell(p_F^+) \eqBy{eq:ell-r-pF+} p_S[1,2n]\circ (\searrow) \eqBy{eq:ell-r-S-partition} \ell(p_S)\circ(\nearrow)\circ r(p_S)\circ(\searrow) \enspace.
\end{equation}
By induction we have for $p_F:=\varphi(F(P))\in D_{2n}^{=0}(n)$ that
\begin{equation} \label{eq:length-pF-pS}
  (|\ell(p_F)|,|r(p_F)|) \eqBy{eq:FS-relation} (|\ell(p_S)|,|r(p_S)|)
\end{equation}
and for $p_L:=\varphi(L(P))\in D_{2n}^-(n)$ that
\begin{equation} \label{eq:length-pS-pL}
  (|\ell(p_S)|,|r(p_S)|) \eqBy{eq:SL-length-relation} (|\ell(p_L)|,|r(p_L)|) \enspace.
\end{equation}
By Lemma~\ref{lemma:FL-invariant} there is a path $\wh{P}\in\cP_{2n}(n,n+1)$ (which is not necessarily distinct from $P$ or $P'$) satisfying
\begin{equation} \label{eq:f-alpha-LolP-LP}
  f_{\alpha_{2n}}(L(\wh{P})) = L(P) \enspace.
\end{equation}
This path is relevant for us, as by the definitions in \eqref{eq:2-factor} and \eqref{eq:new-paths} we have
\begin{equation} \label{eq:length-P+}
  e(P^+)=1+(e(P)-1)+2+e(\wh{P}) \enspace,
\end{equation}
where the $+1$ counts the edge in $P^+$ that originates from the matching $M_{2n+2}^S$, the $+2$ the two edges originating from the matching $M_{2n+1}^{FL}$, and the $-1$ accounts for the fact that the edge $(F(P),S(P))$ is not contained in $P^+$ (see Figure~\ref{fig:new-paths}).
Note that the path $\wh{P}$ also satisfies
\begin{equation} \label{eq:f-alpha-FolP-FP'}
  f_{\alpha_{2n}}(F(\wh{P})) = F(P')
\end{equation}
(we do not use this relation here, though).

We define the lattice paths $\wh{p}_F:=\varphi(F(\wh{P}))\in D_{2n}^{=0}(n)$, $\wh{p}_S:=\varphi(S(\wh{P}))\in D_{2n}^{>0}(n+1)$ and $\wh{p}_L:=\varphi(L(\wh{P}))\in D_{2n}^-(n)$.

Using the second part of Lemma~\ref{lemma:dyck-paths-invariant} and the definition in \eqref{eq:def-ell-r-L} we obtain from \eqref{eq:f-alpha-LolP-LP} that
\begin{equation} \label{length-pL-olpL}
  (|r(\wh{p}_L)|,|\ell(\wh{p}_L)|)=(|\ell(p_L)|,|r(p_L)|)
\end{equation}
(recall that both $p_L$ and $\wh{p}_L$ are contained in the set $D_{2n}^-(n)$).

By induction we have
\begin{equation} \label{eq:length-olpF-olpL}
  (|\ell(\wh{p}_F)|,|r(\wh{p}_F)|) \eqBy{eq:FS-relation} (|\ell(\wh{p}_S)|,|r(\wh{p}_S)|) \eqBy{eq:SL-length-relation} (|\ell(\wh{p}_L)|,|r(\wh{p}_L)|) \enspace.
\end{equation}

Applying the induction hypothesis, we may continue \eqref{eq:length-P+} as follows:
\begin{equation} \label{eq:length-P+-contd}
\begin{split}
  e(P^+) = e(P)+e(\wh{P})+2
        &\eqBy{eq:path-length} (2\underbrace{|\ell(p_F)|}_{\eqBy{eq:length-pF-pS} |\ell(p_S)|}
                                    +2)
                             +(2\!\!\!\!\!\!\!\!\!\underbrace{|\ell(\wh{p}_F)|}_{\eqByM{\eqref{eq:length-pS-pL},\eqref{length-pL-olpL},\eqref{eq:length-olpF-olpL}} |r(p_S)| }\!\!\!\!\!\!\!\!\!
                                    +2)+2 \\
         &= 2(|\ell(p_S)|+|r(p_S)|+2)+2 \\
         &\eqBy{eq:ell-pF+} 2|\ell(p_F^+)|+2 \enspace.
\end{split}
\end{equation}
This completes the proof.
\end{proof}

The following theorem states an expression for the length of the cycles in the 2-factor $\cC_{2n+1}$ in the middle layer of $Q_{2n+1}$ arising from our construction.

\begin{theorem} \label{thm:all-alpha-cycles}
For any $n\geq 1$, the family of paths $\cP_{2n}(n,n+1)$ and the 2-factor $\cC_{2n+1}$ defined in Section~\ref{sec:construction} have the following property: For any cycle in $\cC_{2n+1}$, the distance (along the cycle) between any two neighboring vertices of the form $F(P)\circ(0)$, $F(P')\circ(0)$ with $P,P'\in \cP_{2n}(n,n+1)$ on the cycle equals $4n+2$. Consequently, for any cycle $C\in \cC_{2n+1}$ we have
\begin{equation} \label{eq:cycle-length}
  e(C)=(4n+2)\cdot|\{P\in\cP_{2n}(n,n+1)\mid F(P)\circ(0)\in C\}| \enspace.
\end{equation}
In particular, the length of all cycles in $\cC_{2n+1}$ is a multiple of $4n+2$, and the length of a shortest cycle is at least $4n+2$.
\end{theorem}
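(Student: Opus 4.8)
The plan is to exploit the explicit cyclic structure of $\cC_{2n+1}$ recorded in \eqref{eq:cycle}: every cycle $C$ alternates between forward-traversed bottom-copy paths $P^i\circ(0)\seq Q_{2n}(n,n+1)\circ(0)$ and reverse-traversed top-copy paths $f_{\alpha_{2n}}(\wh P^i)\circ(1)\seq Q_{2n}(n-1,n)\circ(1)$, consecutive blocks being joined by single edges of $M_{2n+1}^{FL}$. The vertices of the form $F(P)\circ(0)$ on $C$ are exactly the starting vertices of the bottom blocks: an interior vertex of a bottom path cannot be such an $F$-vertex because the paths in $\cP_{2n}(n,n+1)$ are pairwise disjoint and $F(P)\neq L(P)$ (by Lemma~\ref{lemma:FSL-D-isomorphic}, $\varphi(F(P))\in D_{2n}^{=0}(n)$ whereas $\varphi(L(P))\in D_{2n}^-(n)$, which are disjoint), while every top-copy vertex carries last bit $1$ and is therefore not of the form $F(P')\circ(0)$.

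First I would trace the portion of $C$ between two neighboring $F$-vertices. Starting at $F(P)\circ(0)$, traversing $P\circ(0)$ forward uses $e(P)$ edges and reaches $L(P)\circ(0)$; the incident edge of $M_{2n+1}^{FL}$ leads to $L(P)\circ(1)$. By Lemma~\ref{lemma:FL-invariant} the last-vertex set is $f_{\alpha_{2n}}$-invariant, so there is a unique $\wh P\in\cP_{2n}(n,n+1)$ with $f_{\alpha_{2n}}(L(\wh P))=L(P)$, and $L(P)\circ(1)$ is the endpoint of the top block $f_{\alpha_{2n}}(\wh P)\circ(1)$. Traversing this block in reverse uses $e(\wh P)$ edges and reaches $f_{\alpha_{2n}}(F(\wh P))\circ(1)$; the incident $M_{2n+1}^{FL}$-edge leads down to $f_{\alpha_{2n}}(F(\wh P))\circ(0)$, which by the first equality in \eqref{eq:FL-invariant} is again an $F$-vertex $F(P')\circ(0)$. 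By the previous paragraph no $F$-vertex lies strictly inside this segment, so the distance between neighboring $F$-vertices equals $e(P)+e(\wh P)+2$.

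It then remains to show $e(P)+e(\wh P)=4n$. Writing $p_F:=\varphi(F(P))$, $p_L:=\varphi(L(P))$, $\wh p_L:=\varphi(L(\wh P))$ and $\wh p_F:=\varphi(F(\wh P))$, the length formula \eqref{eq:path-length} gives $e(P)+e(\wh P)=2|\ell(p_F)|+2|\ell(\wh p_F)|+4$. Chaining \eqref{eq:FS-relation} and \eqref{eq:SL-length-relation} yields $|\ell(p_F)|=|\ell(p_L)|$ and $|\ell(\wh p_F)|=|\ell(\wh p_L)|$. Since $f_{\alpha_{2n}}(L(\wh P))=L(P)$, the second part of Lemma~\ref{lemma:dyck-paths-invariant} (reflecting the unique touch point of $y=-1$ about $x=n$), together with the definition \eqref{eq:def-ell-r-L} of $\ell,r$ on $D_{2n}^-(n)$, gives $|\ell(\wh p_L)|=|r(p_L)|$. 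Finally, since $p_L\in D_{2n}^-(n)$ has $|p_L|=2n$ and decomposes as $\ell(p_L)\circ(\searrow,\nearrow)\circ r(p_L)$, we get $|\ell(p_L)|+|r(p_L)|=2n-2$. Substituting, $e(P)+e(\wh P)=2(2n-2)+4=4n$, so the distance between neighboring $F$-vertices is exactly $4n+2$.

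The formula \eqref{eq:cycle-length} then follows by summing: walking once around $C$ decomposes it into segments each of length $4n+2$ bounded by consecutive $F$-vertices, with one such vertex per segment, so $e(C)=(4n+2)\cdot|\{P\in\cP_{2n}(n,n+1)\mid F(P)\circ(0)\in C\}|$. Every cycle contains at least one bottom block and hence at least one $F$-vertex, which immediately gives that each cycle length is a positive multiple of $4n+2$ and that the shortest cycle has length at least $4n+2$. I expect the only real subtlety to be the edge-count identity $e(P)+e(\wh P)=4n$; however, all three ingredients are already available from Lemma~\ref{lemma:all-alpha-paths} and the reflection statement of Lemma~\ref{lemma:dyck-paths-invariant}, so this step is careful bookkeeping rather than a genuine obstacle. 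Slightly more attention is needed to confirm the reverse traversal direction of the top blocks and that no $F$-vertex is hidden in a segment interior.
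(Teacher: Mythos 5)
Your proof is correct and takes essentially the same route as the paper: both trace the cycle segment between neighboring $F$-vertices as $P\circ(0)$, a matching edge, the reverse-traversed block $f_{\alpha_{2n}}(\wh{P})\circ(1)$ with $f_{\alpha_{2n}}(L(\wh{P}))=L(P)$, and a second matching edge, giving distance $e(P)+e(\wh{P})+2$, and both evaluate this to $4n+2$ using the length relations of Lemma~\ref{lemma:all-alpha-paths} together with the reflection statement of Lemma~\ref{lemma:dyck-paths-invariant}. The only cosmetic difference is that the paper invokes the computation already carried out in \eqref{eq:ell-pF+} and \eqref{eq:length-P+-contd}, whereas you redo that bookkeeping explicitly (on $p_L$ rather than $p_S$) and additionally spell out why no $F$-vertex can hide in a segment interior.
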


Even though Theorem~\ref{thm:all-alpha-cycles} holds for \emph{any} choice of the parameter sequence $(\alpha_{2i})_{i\geq 1}$, the cardinality of the set on the right hand side of \eqref{eq:cycle-length} does of course depend on the parameter sequence.

\begin{proof}
Fix a cycle $C$ in $\cC_{2n+1}$ and recall from the definition in \eqref{eq:2-factor} that $C$ contains at least one vertex of the form $F(P)\circ(0)$ with $P\in\cP_{2n}(n,n+1)$. We fix another path $P'\in\cP_{2n}(n,n+1)$ such that $F(P')\circ(0)$ is the closest vertex to $F(P)\circ(0)$ of this form on $C$ when walking along the cycle in the direction of the edge $(F(P),S(P))\circ(0)$ (if $C$ contains only one vertex of this form, then we set $P':=P$). By the definition in \eqref{eq:2-factor} there is a path $\wh{P}\in \cP_{2n}(n,n+1)$ (which is not necessarily distinct from $P$ or $P'$) satisfying $f_{\alpha_{2n}}(L(\wh{P}))=L(P)$ and $f_{\alpha_{2n}}(F(\wh{P}))=F(P')$ (cf.~\eqref{eq:f-alpha-LolP-LP} and \eqref{eq:f-alpha-FolP-FP'} in the proof of Lemma~\ref{lemma:all-alpha-paths}), and the distance between $F(P)\circ(0)$ and $F(P')$ along the cycle $C$ is
\begin{equation} \label{eq:distance-FP-FP'}
  e(P)+e(\wh{P})+2 \enspace,
\end{equation}
where the $+2$ counts the edges in $C$ that originate from the matching $M_{2n+1}^{FL}$ (see Figure~\ref{fig:new-paths}).
In the proof of Lemma~\ref{lemma:all-alpha-paths} we have already analyzed an expression of the form~\eqref{eq:distance-FP-FP'}. Combining \eqref{eq:ell-pF+} and \eqref{eq:length-P+-contd} shows that \eqref{eq:distance-FP-FP'} evaluates to $4n+2$, as claimed.
\end{proof}

\begin{figure}
\centering
\PSforPDF{
 \psfrag{bnp2}{$B_{2n+2}(n+2)$}
 \psfrag{bnp1}{$B_{2n+2}(n+1)$}
 \psfrag{q2n4}{$Q_{2n+1}\circ(1)$}
 \psfrag{q2n1}{$Q_{2n}\circ(0,0)$}
 \psfrag{q2n2}{$Q_{2n}\circ(0,1)$}
 \psfrag{q2n3}{$Q_{2n}\circ(1,1)$}
 \psfrag{m2}{\scriptsize $M_{2n+1}^{FL}\circ(1)$} 
 \psfrag{m1}{\scriptsize $M_{2n+2}^{S}$}
 \psfrag{pp}[c][c][1][13]{$P^+\in\cP_{2n+2}'$}
 \psfrag{pm0}{\scriptsize $P\circ(0,0)$}
 \psfrag{pm1}{\scriptsize $P\circ(0,1)$}
 \psfrag{pb}{\scriptsize $\wh{P}\circ(0,1)$}
 \psfrag{ppr}{\scriptsize $P'\circ(0,1)$}
 \psfrag{fpp}{\scriptsize $F(P^+)$}
 \psfrag{spp}{\scriptsize $S(P^+)$}
 \psfrag{lpp}{\scriptsize $L(P^+)$}
 \psfrag{fpb1}{\scriptsize $f_{\alpha_{2n}}(\wh{P})\circ(0,1)$}
 \psfrag{fpb2}{\scriptsize $f_{\alpha_{2n}}(\wh{P})\circ(1,1)$}
 \psfrag{p2n1}{\scriptsize $\cP_{2n}(n,n+1)\circ(0,0)$}
 \psfrag{p2n2}{\scriptsize $\cP_{2n}(n,n+1)\circ(0,1)$}
 \psfrag{z}{\scriptsize $0$}
 \psfrag{2n}{\scriptsize $2n$}
 \psfrag{ps}{\scriptsize $p_S=\varphi(S(P))$}
 \psfrag{pfp}{\scriptsize $p_F^+=\varphi(F(P^+))$}
 \psfrag{psp}{\scriptsize $p_S^+=\varphi(S(P^+))$}
 \psfrag{plp}{\scriptsize $p_L^+=\varphi(L(P^+))$}
 \psfrag{pfpr}{\scriptsize $p_F'=\varphi(F(P'))$}
 \psfrag{pphi}{\scriptsize $=\varphi(f_{\alpha_{2n}}(F(\wh{P})))$}
 \psfrag{pa1}{\scriptsize $p_F=\varphi(F(P))$}
 \psfrag{pa2}{\scriptsize $p_L=\varphi(L(P))$}
 \psfrag{pa4}{\scriptsize $\wh{p}_F=\varphi(F(\wh{P}))$}
 \psfrag{pa5}{\scriptsize $\wh{p}_L=\varphi(L(\wh{P}))$}
 \psfrag{f1}{\scriptsize $p_L^+=\ol{\rev}(\wh{p}_F)\circ(\searrow,\nearrow)$}
 \psfrag{f2}{\scriptsize $p_L=\ol{\rev}(\wh{p}_L)$}
 \psfrag{psi1}{\scriptsize $\psi(p_F)$}
 \psfrag{psi2}{\scriptsize $\psi(p_F')$}
 \psfrag{psiell}{\scriptsize $\psi(\ell(p_F))$}
 \psfrag{psir}{\scriptsize $\psi(r(p_F))$}
 \includegraphics{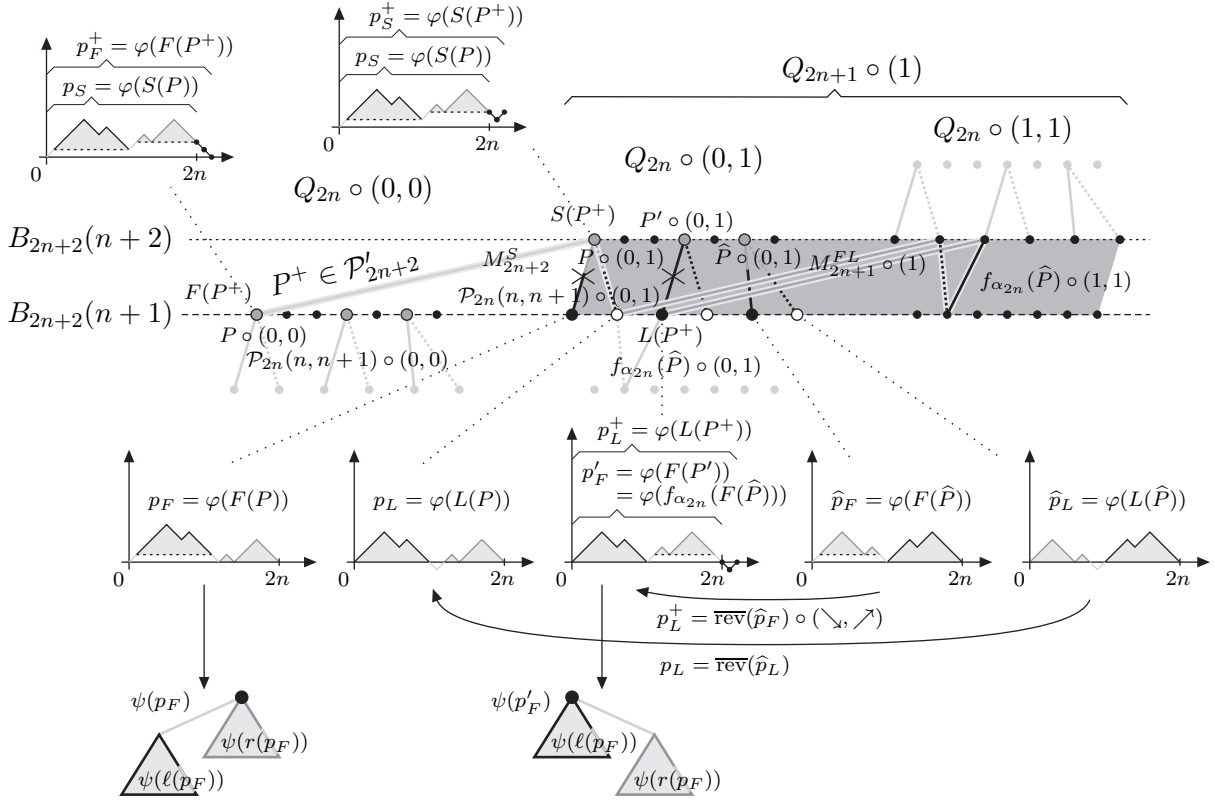}
}
\caption{Notations used in the proofs of Lemma~\ref{lemma:all-alpha-paths}, Theorem~\ref{thm:all-alpha-cycles}, Lemma~\ref{lemma:alpha0-FL-relation} and Theorem~\ref{thm:alpha0-cycles}. The figure illustrates the relations between various lattice paths corresponding to certain vertices used in our construction in the inductive step $n\rightarrow n+1$ ($Q_{2n}\rightarrow Q_{2n+2}$) when the parameter sequence $(\alpha_{2i})_{i\geq 1}$, $\alpha_{2i}=(0,0,\ldots,0)\in\{0,1\}^{i-1}$, is used. Even though for general parameter sequences certain subpaths of those lattice paths are not identical anymore, the \emph{length} of those subpaths is still the same (this is exploited in the proofs of Lemma~\ref{lemma:all-alpha-paths} and Theorem~\ref{thm:all-alpha-cycles}).}
\label{fig:new-paths}
\end{figure}

\subsection{The all-zero parameter sequence}

By the second part of Lemma~\ref{lemma:all-alpha-paths}, certain subpaths of the lattice paths $\varphi(S(P))$ and $\varphi(L(P))$ corresponding to the second and last vertex on each of the paths $P\in\cP_{2n}(k,k+1)$ arising in our construction have the same length (by Lemma~\ref{lemma:FSL-D-isomorphic} those lattice paths satisfy $\varphi(S(P))\in D_{2n}^{>0}(k+1)$ and $\varphi(L(P))\in D_{2n}^-(k)$). The following lemma states that if the all-zero parameter sequence is used for the construction, those subpaths not only have the same length, but are in fact the same (more specifically, their respective step sequences are the same).

\begin{lemma} \label{lemma:alpha0-FL-relation}
Let $n\geq 1$ and consider the parameter sequence $(\alpha_{2i})_{1\leq i\leq n-1}$ with $\alpha_{2i}=(0,0,\ldots,0)\in\{0,1\}^{i-1}$ for all $i=1,\ldots,n-1$. The families of paths $\cP_{2n}(k,k+1)$, $k=n,n+1,\ldots,2n-1$, defined in Section~\ref{sec:construction} for this parameter sequence have the following property: For any path $P\in\cP_{2n}(k,k+1)$, defining $p_S:=\varphi(S(P))\in D_{2n}^{>0}(k+1)$ and $p_L:=\varphi(L(P))\in D_{2n}^-(k)$, we have
\begin{equation} \label{eq:SL-relation}
  (\ell(p_S),r(p_S))=(\ell(p_L),r(p_L)) \enspace,
\end{equation}
where $\ell(p_S)$ and $r(p_S)$ are defined in \eqref{eq:def-ell-r-S}, and $\ell(p_L)$ and $r(p_L)$ in \eqref{eq:def-ell-r-L}.
\end{lemma}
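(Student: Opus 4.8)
The plan is to argue by induction over $n$, mirroring the inductive structure of the construction and reusing the case analysis already set up in the proof of Lemma~\ref{lemma:all-alpha-paths}. Since \eqref{eq:SL-relation} is a strengthening of \eqref{eq:SL-length-relation} (which asserts equality of lengths, whereas here we want equality of step sequences), I expect the two inductive cases to be exactly the same two cases as before: either $P^+$ arises from a path $P$ in a previously constructed family by appending two bits (the set in \eqref{eq:upper-paths}), or $P^+\in\cP_{2n+2}'$ comes from the splitting of the 2-factor $\cC_{2n+1}$. The base case $n=1$ is immediate: as computed in the proof of Lemma~\ref{lemma:all-alpha-paths}, for the single path $P=((1,0),(1,1),(0,1))$ all four subpaths $\ell(p_S),r(p_S),\ell(p_L),r(p_L)$ are trivial (single points, zero steps), so \eqref{eq:SL-relation} holds vacuously.

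For the first case, I would reuse equations \eqref{eq:pSp-pS} and \eqref{eq:pLp-pL}, namely $p_S^+=p_S\circ\varphi((x_1,x_2))$ and $p_L^+=p_L\circ\varphi((x_1,x_2))$, together with the decompositions \eqref{eq:ell-r-pSp-pS} and \eqref{eq:ell-r-pLp-pL}, which already show $(\ell(p_S^+),r(p_S^+))=(\ell(p_S),r(p_S)\circ\varphi((x_1,x_2)))$ and $(\ell(p_L^+),r(p_L^+))=(\ell(p_L),r(p_L)\circ\varphi((x_1,x_2)))$. By the induction hypothesis $(\ell(p_S),r(p_S))=(\ell(p_L),r(p_L))$ (as step sequences), and appending the same two steps $\varphi((x_1,x_2))$ to both $r$-parts preserves this equality; hence \eqref{eq:SL-relation} for $P^+$ follows at the level of step sequences with no new work beyond what Lemma~\ref{lemma:all-alpha-paths} already provides.

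The second case, $P^+\in\cP_{2n+2}'$, is where the all-zero hypothesis must be used and is the main obstacle. Here \eqref{eq:pSp-pS2} and \eqref{eq:pLp-pFpr} give $p_S^+=p_S\circ(\searrow,\nearrow)$ and $p_L^+=p_F'\circ(\searrow,\nearrow)$, and from \eqref{eq:ell-r-pS+} and \eqref{eq:ell-r-pL+} we already have $(\ell(p_S^+),r(p_S^+))=(p_S[1,2n]\circ(\searrow),())$ and $(\ell(p_L^+),r(p_L^+))=(p_F',())$. The $r$-parts are both empty, so the whole claim reduces to the single equality of step sequences
\begin{equation*}
  p_S[1,2n]\circ(\searrow) = p_F' \enspace,
\end{equation*}
where $p_S=\varphi(S(P))$ and $p_F'=\varphi(F(P'))$ for the paths $P,P'\in\cP_{2n}(n,n+1)$ attached to $P^+$. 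The key fact I would invoke is the relation $f_{\alpha_{2n}}(F(\wh{P}))=F(P')$ from \eqref{eq:f-alpha-FolP-FP'}, combined with the second part of Lemma~\ref{lemma:dyck-paths-invariant} and the all-zero assumption $\alpha_{2i}=(0,\ldots,0)$. Under the all-zero sequence each $\pi_{\alpha_{2i}}$ is the identity, so $f_{\alpha_{2n}}$ acts on lattice paths simply as $\ol{\rev}$, i.e.\ mirroring along the axis $x=n$; this is exactly the situation depicted in Figure~\ref{fig:new-paths}. I would then trace, as in the figure, how $p_F'=\ol{\rev}(\wh{p}_F)$ and how the step sequence of $p_S$ (for the path $P$, with $\wh{P}$ linked to $P$ through the matching $M_{2n+1}^{FL}$) relates to that of $\wh{p}_F$ after the mirror, the point being that with the identity permutations no bit-swaps intervene and the step sequences match on the nose rather than only in length. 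Concretely, I would show that dropping the all-zero hypothesis is precisely what breaks the step-sequence identity (while \eqref{eq:SL-length-relation} survives), so the hypothesis is used exactly to force $\pi_{\alpha_{2n}}=\mathrm{id}$ and thereby upgrade the length equalities of Lemma~\ref{lemma:all-alpha-paths} to genuine equalities of step sequences; carefully verifying this identification of step sequences via Figure~\ref{fig:new-paths} is the one step requiring real care.
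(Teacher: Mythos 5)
Your overall framework is right and matches the paper's proof: induction over $n$, the same two-case split as in Lemma~\ref{lemma:all-alpha-paths}, the same treatment of the base case and of the case where $P^+$ lies in \eqref{eq:upper-paths} (where indeed the strengthened induction hypothesis combined with \eqref{eq:ell-r-pSp-pS} and \eqref{eq:ell-r-pLp-pL} closes the argument with no new work), and the correct reduction of the case $P^+\in\cP_{2n+2}'$ to the single identity $p_F'=p_S[1,2n]\circ(\searrow)$. However, at exactly the step you yourself flag as ``the one step requiring real care,'' the proposal stops being a proof: you never derive this identity, and the ingredients you cite for it are not sufficient. You invoke only the $F$-relation \eqref{eq:f-alpha-FolP-FP'} (giving $p_F'=\ol{\rev}(\wh{p}_F)$) and the second part of Lemma~\ref{lemma:dyck-paths-invariant}; but the latter is a statement about the \emph{abscissa} of the $y=-1$ touch point, i.e.\ a length-type fact, and cannot upgrade anything to an identity of step sequences. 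The structural link between $P$ and $\wh{P}$ runs through their \emph{last} vertices, so the derivation must use the $L$-relation \eqref{eq:f-alpha-LolP-LP}, which under the all-zero sequence becomes $\ol{\rev}(\wh{p}_L)=p_L$ and — because mirroring a path in $D_{2n}^-(n)$ interchanges its two subpaths — yields $\big(\ol{\rev}(r(\wh{p}_L)),\ol{\rev}(\ell(\wh{p}_L))\big)=(\ell(p_L),r(p_L))$. This swap is the key step-sequence fact, and it is absent from your outline.

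Moreover, your case-2 sketch never applies the induction hypothesis at all, whereas it is needed \emph{twice}: once for $\wh{P}$, combined with \eqref{eq:FS-relation} of Lemma~\ref{lemma:all-alpha-paths}, to get $(\ell(\wh{p}_F),r(\wh{p}_F))=(\ell(\wh{p}_S),r(\wh{p}_S))=(\ell(\wh{p}_L),r(\wh{p}_L))$, and once for $P$ to get $(\ell(p_S),r(p_S))=(\ell(p_L),r(p_L))$. Only by chaining all of these — write $\wh{p}_F=(\nearrow)\circ\ell(\wh{p}_F)\circ(\searrow)\circ r(\wh{p}_F)$ via \eqref{eq:ell-r-F-partition}, replace $\ell(\wh{p}_F),r(\wh{p}_F)$ by $\ell(\wh{p}_L),r(\wh{p}_L)$, apply $\ol{\rev}$ (which reverses concatenation order and flips steps), and then substitute the swap and the $P$-side induction hypothesis — does one arrive at $p_F'=\ell(p_S)\circ(\nearrow)\circ r(p_S)\circ(\searrow)=p_S[1,2n]\circ(\searrow)$. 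This chain through $\wh{p}_F\to\wh{p}_L\to p_L\to p_S$ is the actual content of the lemma; ``tracing the figure'' with only the $F$-relation and the touch-point lemma in hand would not produce it, so the proposal as written has a genuine gap at its central step.
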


\begin{proof}
We argue by induction over $n$. To settle the induction basis $n=1$ we argue exactly as in the proof of Lemma~\ref{lemma:all-alpha-paths}.

For the induction step $n\rightarrow n+1$ let $n\geq 1$ be fixed. We consider a fixed path $P^+$ from one of the families $\cP_{2n+2}(k,k+1)$, $k=n+1,n+2,\ldots,2n+1$, and define the lattice paths $p_S^+:=\varphi(S(P^+))\in D_{2n+2}^{>0}(k+1)$ and $p_L^+:=\varphi(L(P^+))\in D_{2n+2}^-(k)$. As argued in the proof of Lemma~\ref{lemma:all-alpha-paths}, $P^+$ is either contained in the set \eqref{eq:upper-paths} or in the set $\cP_{2n+2}'$ defined in \eqref{eq:new-paths} (in the latter case we have $k=n+1$).

The case that $P^+$ is contained in the set \eqref{eq:upper-paths} can be treated analogously as in the proof of Lemma~\ref{lemma:all-alpha-paths}: Replacing \eqref{eq:ind-SL-length-relation} by the modified induction hypothesis $(\ell(p_S),r(p_S))=(\ell(p_L),r(p_L))$ and using this relation together with \eqref{eq:ell-r-pLp-pL} and \eqref{eq:ell-r-pSp-pS} yields $(\ell(p_S^+),r(p_S^+))=(\ell(p_L^+),r(p_L^+))$ and thus proves \eqref{eq:SL-relation}. (In fact, this part of the argument does not use that $\alpha_{2n}=(0,0,\ldots,0)\in\{0,1\}^{n-1}$, but only that all other elements of the parameter sequence used in previous construction steps are zero vectors as well.)

We now focus on the more interesting case that $P^+$ is contained in the set $\cP_{2n+2}'$. For the reader's convenience, Figure~\ref{fig:new-paths} illustrates the notations used in this part of the proof. We let $P,P'\in\cP_{2n}(n,n+1)$, $p_S\in D_{2n}^{>0}(n+1)$, $p_L\in D_{2n}^-(n)$ and $p_F'\in D_{2n}^{=0}(n)$ be defined as in the proof of Lemma~\ref{lemma:all-alpha-paths}. By \eqref{eq:ell-r-pS+} and \eqref{eq:ell-r-pL+}, to complete the proof of the lemma we need to show that $p_F'=p_S[1,2n]\circ(\searrow)$.

By the definition in \eqref{eq:f-alpha}, for $\alpha_{2n}=(0,0,\ldots,0)\in\{0,1\}^{n-1}$ we have
\begin{equation} \label{eq:f-alpha-simplified}
  f_{\alpha_{2n}}=\ol{\rev} \enspace,
\end{equation}
so $f_{\alpha_{2n}}$ just reverses and inverts all bits.

By induction we have
\begin{equation} \label{eq:ell-r-pS-pL}
  (\ell(p_S),r(p_S)) = (\ell(p_L),r(p_L))
\end{equation}
(cf.~\eqref{eq:length-pS-pL}).

We let $\wh{P}\in\cP_{2n}(n,n+1)$, $\wh{p}_F\in D_{2n}^{=0}(n)$, $\wh{p}_S\in D_{2n}^{>0}(n+1)$ and $\wh{p}_L\in D_{2n}^-(n)$ be defined as in the proof of Lemma~\ref{lemma:all-alpha-paths}. Using \eqref{eq:f-alpha-simplified} the relations \eqref{eq:f-alpha-LolP-LP} and \eqref{eq:f-alpha-FolP-FP'} simplify to
\begin{equation} \label{eq:rev-LolP-LP}
  \ol{\rev}(L(\wh{P})) = L(P)
\end{equation}
and
\begin{equation} \label{eq:rev-FolP-FP'}
  \ol{\rev}(F(\wh{P})) = F(P') \enspace.
\end{equation}

Using the definition in \eqref{eq:rev-paths} we obtain from \eqref{eq:rev-LolP-LP} that
\begin{equation*}
  \ol{\rev}(\wh{p}_L)=p_L \enspace,
\end{equation*}
which by the definition in \eqref{eq:def-ell-r-L} implies that
\begin{equation} \label{eq:ell-r-olpL-pL-switch}
  \big(\ol{\rev}(r(\wh{p}_L)),\ol{\rev}(\ell(\wh{p}_L))\big) = (\ell(p_L),r(p_L))
\end{equation}
(recall that both $p_L$ and $\wh{p}_L$ are contained in the set $D_{2n}^-(n)$; cf.~\eqref{length-pL-olpL}).

By Lemma~\ref{lemma:all-alpha-paths} and by induction we have
\begin{equation} \label{eq:ell-r-olpF-olpL}
  (\ell(\wh{p}_F),r(\wh{p}_F)) \eqBy{eq:FS-relation} (\ell(\wh{p}_S),r(\wh{p}_S)) \eqBy{eq:SL-relation} (\ell(\wh{p}_L),r(\wh{p}_L))
\end{equation}
(cf.~\eqref{eq:length-olpF-olpL}).

Using the definition in \eqref{eq:rev-paths} we obtain from \eqref{eq:rev-FolP-FP'} that
\begin{equation} \label{eq:pF'}
\begin{split}
  p_F'=\ol{\rev}(\wh{p}_F)
    &\eqBy{eq:ell-r-F-partition} \ol{\rev}\big((\nearrow)\circ \ell(\wh{p}_F)) \circ(\searrow)\circ r(\wh{p}_F)\big) \\
    &\eqBy{eq:ell-r-olpF-olpL} \ol{\rev}\big((\nearrow)\circ \ell(\wh{p}_L) \circ(\searrow)\circ r(\wh{p}_L)\big) \\
    &=\ol{\rev}(r(\wh{p}_L))\circ (\nearrow)\circ \ol{\rev}(\ell(\wh{p}_L))\circ (\searrow) \\
    &\eqByM{\eqref{eq:ell-r-pS-pL},\eqref{eq:ell-r-olpL-pL-switch}} \ell(p_S) \circ (\nearrow) \circ r(p_S) \circ (\searrow) \\
    &\eqBy{eq:ell-r-S-partition} p_S[1,2n] \circ (\searrow) \enspace,
\end{split}
\end{equation}
completing the proof.
\end{proof}

In order to determine the number and lengths of the cycles in the 2-factor $\cC_{2n+1}$ for the all-zero parameter sequence, we first introduce some terminology. 

\textit{Ordered rooted trees and plane trees.}
An \emph{ordered rooted tree} is a rooted tree with a specified left-to-right ordering for the children of each vertex. We denote the set of all ordered rooted trees on $n+1$ vertices (and $n$ edges) by $\cT_{n+1}^*$. It is well known that $|\cT_{n+1}^*|=C_n$, the $n$-th Catalan number, so we have $(|\cT_{n+1}^*|)_{n\geq 1}=(1,2,5,14,42,132,429,1430,4862,16796,\ldots)$ (see \cite{catalan-seq}).

A \emph{plane tree} is a tree embedded in the plane. We denote the set of all plane trees on $n+1$ vertices by $\cT_{n+1}$. The number of plane trees is given by
\begin{subequations} \label{eq:plane-trees}
\begin{equation}
  |\cT_{n+1}| = r_{n+1}-\frac{1}{2} \big(C_n-C_{\frac{n-1}{2}} \cdot \mathbf{1}_{n\in 2\mathbb{Z}+1}\big) \enspace,
\end{equation}
where $\mathbf{1}_{n\in 2\mathbb{Z}+1}\in\{0,1\}$ denotes the indicator function for $n$ being odd and
\begin{equation}
  r_{n+1} := \frac{1}{2n} \sum_{d|n} \phi(n/d) \binom{2d}{d}
\end{equation}
\end{subequations}
with the Euler totient function $\phi$.
We have $(|\cT_{n+1}|)_{n\geq 1}=(1,1,2,3,6,14,34,95,280,854,\ldots)$ and
\begin{equation*}
  |\cT_{n+1}|=(1+o(1))\frac{4^n}{2\sqrt{\pi}n^{5/2}}
\end{equation*}
(see \cite{plane-tree-seq}).

\textit{Rotation of ordered rooted trees.}
We say that a tree $T'\in \cT_{n+1}^*$ is obtained by a \emph{rotation operation} from a tree $T\in \cT_{n+1}^*$ with root vertex $r$ and children $v_1,\ldots,v_l$ ($v_1$ is the leftmost child and $v_l$ the rightmost child), if $T'$ is obtained from $T$ by making $v_1$ the new root vertex, and the subtree rooted at $r$ without $v_1$ and its descendants (this subtree only contains $r$ and the subtrees rooted at $v_2,\ldots,v_l$) a new rightmost child of $v_1$. Intuitively, this operation rotates a tree to the right by shifting the root vertex to the left.

\newcommand{\rot}{\curvearrowright}

For two trees $T,T'\in \cT_{n+1}^*$ we write $T\rot T'$, if $T'$ can be obtained from $T$ by a sequence of rotation operations. Note that $\rot$ defines an equivalence relation on the set $\cT_{n+1}^*\times \cT_{n+1}^*$. For any $T\in \cT_{n+1}^*$ we denote by $[T]_{\rot}:=\{T'\in \cT_{n+1}^*\mid T\rot T'\}$ the corresponding equivalence class.
Observe that two ordered rooted trees are equivalent in this sense, if and only if they represent the same plane tree when we embed them into the plane and unmark the root vertices (where for an ordered rooted tree we first embed the root vertex and then recursively all children from left to right according to the specified ordering).

We say that a plane tree $T\in \cT_{n+1}$ is \emph{asymmetric} if $|[T^*]_{\rot}|=2n$, where $T^*\in \cT_{n+1}^*$ is obtained by rooting $T$ arbitrarily. Equivalently, $T$ is asymmetric if there is no nontrivial rotation operation of the plane that maps $T$ onto itself. We denote the set of all asymmetric plane trees on $n+1$ vertices by $\cT_{n+1}^{\mathrm{asym}}$. The number of asymmetric plane trees is given by
\begin{equation*}
  |\cT_{n+1}^{\mathrm{asym}}| = \rhat_{n+1}-\frac{1}{2} \big(C_n+C_{\frac{n-1}{2}} \cdot \mathbf{1}_{n\in 2\mathbb{Z}+1}\big) \enspace,
\end{equation*}
where
\begin{equation*}
  \rhat_{n+1} := \frac{1}{2n} \sum_{d|n} \mu(n/d) \binom{2d}{d}
\end{equation*}
with the Möbius function $\mu$.
We have $(|\cT_{n+1}^{\mathrm{asym}}|)_{n\geq 1}=(0,0,0,1,3,9,28,85,262,827,\ldots)$ and
\begin{equation} \label{eq:number-asymmetric-trees}
  |\cT_{n+1}^{\mathrm{asym}}|=(1-o(1))|\cT_{n+1}|
\end{equation}
(see \cite{asymmetric-plane-tree-seq}).

\textit{Bijection $\psi$ between lattice paths and ordered rooted trees.}
We define an \emph{ordered rooted tree with an active vertex} as a pair $(T,v)$, where $T$ is an ordered rooted tree and $v$ is a vertex of $T$. If $v$ equals the root of $T$, then $(T,v)$ can be identified with the ordinary ordered rooted tree $T$.

We inductively define a mapping $\psi$ that assigns to any lattice path in one of the sets $D_{n}(k)$, $n\geq 0$, $0\leq k\leq n$, defined in Section~\ref{sec:lattice-paths} an ordered rooted tree with an active vertex, as follows:
If $n=0$, then $D_0(0)$ contains only the lattice path $p$ that consists of the single point $(0,0)$. For this $p$ we define $\psi(p)$ to be the ordered root tree that consists only of a single vertex, and we define the active vertex to be the root vertex. If $n\geq 1$, then for any $0\leq k\leq n$ and any lattice path $p\in D_{n}(k)$, $p=(p_1,\ldots,p_{n-1},p_n)$, we define $p^-:=(p_1,\ldots,p_{n-1})$ and consider the tree $\psi(p^-)=:(T,v)$ ($v$ is the active vertex of this tree). We distinguish the cases whether the last step of the path $p$ is an upstep, $p_n=\nearrow$, or a downstep, $p_n=\searrow$. If $p_n=\nearrow$, then we define $T+w$ as the tree that is obtained from $T$ by adding a new vertex $w$ as the rightmost child of $v$, and define $\psi(p):=(T+w,w)$. If $p_n=\searrow$, then we define $\psi(p):=(T,u)$, where $u$ is the parent vertex of $v$.

Note that for any $p\in D_n(k)$, the tree $\psi(p)$ has $k$ edges, $k+1$ vertices and the active vertex is at depth $2k-n$ in the rightmost branch. It follows that the mapping $\psi|_{D_n(k)}$ is a bijection between $D_n(k)$ and all ordered rooted trees with $k+1$ vertices and an active vertex at depth $2k-n$ in the rightmost branch. In particular, $\psi|_{D_{2n}^{=0}(n)}$ is a bijection between $D_{2n}^{=0}(n)$ and $\cT_{n+1}^*$.

\begin{lemma} \label{lemma:rotation}
Let $n\geq 1$ and consider the parameter sequence $(\alpha_{2i})_{1\leq i\leq n}$ with $\alpha_{2i}=(0,0,\ldots,0)\in\{0,1\}^{i-1}$ for all $i=1,\ldots,n$. The family of paths $\cP_{2n}(n,n+1)$ and the 2-factor $\cC_{2n+1}$ defined in Section~\ref{sec:construction} for this parameter sequence have the following property:
For any cycle in $\cC_{2n+1}$ and any two neighboring vertices of the form $F(P)\circ(0)$, $F(P')\circ(0)$ with $P,P'\in\cP_{2n}(n,n+1)$ on the cycle, we have for $p_F:=\varphi(F(P))\in D_{2n}^{=0}(n)$ and $p_F':=\varphi(F(P'))\in D_{2n}^{=0}(n)$ that the corresponding ordered rooted trees $\psi(p_F)$ and $\psi(p_F')$ from the set $\cT_{n+1}^*$ differ by exactly one rotation operation.
\end{lemma}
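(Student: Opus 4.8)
The plan is to reduce the statement to the lattice-path computation already carried out in the proof of Lemma~\ref{lemma:alpha0-FL-relation} and then to translate the resulting transformation of paths into the language of ordered rooted trees via the bijection $\psi$. First I would set up the two neighboring vertices exactly as in the proof of Theorem~\ref{thm:all-alpha-cycles}: for the path $P$ whose vertex $F(P)\circ(0)$ we start from, let $p_S:=\varphi(S(P))$, and let $P'$ be the path giving the next vertex $F(P')\circ(0)$ of this form along the cycle. Writing $p_F:=\varphi(F(P))$ and $p_F':=\varphi(F(P'))$, both of which lie in $D_{2n}^{=0}(n)$ by Lemma~\ref{lemma:FSL-D-isomorphic} so that $\psi$ applies, I would record the two explicit step-sequence decompositions
\begin{equation*}
  p_F = (\nearrow)\circ\ell(p_F)\circ(\searrow)\circ r(p_F), \qquad
  p_F' = \ell(p_F)\circ(\nearrow)\circ r(p_F)\circ(\searrow),
\end{equation*}
using \eqref{eq:ell-r-F-partition} for the first and the chain \eqref{eq:pF'} for the second, after substituting $(\ell(p_S),r(p_S))=(\ell(p_F),r(p_F))$ from \eqref{eq:FS-relation}. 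This is the only place the all-zero assumption enters, since \eqref{eq:pF'} relies on $f_{\alpha_{2n}}=\ol{\rev}$.

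The second and main step is to describe the trees $\psi(p_F)$ and $\psi(p_F')$ in terms of the two forests encoded by the balanced subpaths $\ell(p_F)$ and $r(p_F)$. Tracking the active vertex through the inductive definition of $\psi$, I would argue that in $\psi(p_F)$ the initial upstep creates the leftmost child $v_1$ of the root $r$, the subpath $\ell(p_F)$ (which starts and returns to height $1$) builds the forest $A$ hanging below $v_1$ and returns the active vertex to $v_1$, the following downstep returns the active vertex to $r$, and finally $r(p_F)$ (balanced at height $0$) builds the forest $B$ formed by the subtrees rooted at the remaining children $v_2,\dots,v_l$ of $r$. For $\psi(p_F')$ the same step sequences are read in a different order: $\ell(p_F)$ now builds the forest $A$ directly below the root $r'$, the upstep attaches a single new rightmost child $w$, the copy of $r(p_F)$ builds the forest $B$ below $w$, and the closing downstep returns the active vertex to $r'$.

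Finally I would compare $\psi(p_F')$ with the result of applying one rotation operation to $\psi(p_F)$. In $\psi(p_F)$ the leftmost child is $v_1$, carrying forest $A$; one rotation makes $v_1$ the new root, keeps the roots of $A$ as its leftmost children, and appends the old root $r$ (carrying forest $B$) as a new rightmost child. Under the identification $v_1\leftrightarrow r'$ and $r\leftrightarrow w$ this is exactly the tree $\psi(p_F')$ described above, so $\psi(p_F')$ is obtained from $\psi(p_F)$ by precisely one rotation. I would note that since $p_F\in D_{2n}^{=0}(n)$ with $n\ge 1$, the tree $\psi(p_F)$ has at least two vertices, so its root has a leftmost child and the rotation is defined; the boundary case $r(p_F)=()$ (the root has a single child, i.e.\ $B$ empty) is covered by the same argument.

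The main obstacle is the bookkeeping in the middle step: one must be completely precise about which vertex is active after each balanced block and confirm that reading an \emph{identical} step sequence at heights $0$ and $1$ produces identical forests. This height-invariance—which follows because a balanced block never pops past its starting active vertex, so its effect is determined entirely by the relative step pattern—is exactly what makes the two orderings of $\ell(p_F)$ and $r(p_F)$ correspond to a single rotation rather than to some more complicated tree surgery.
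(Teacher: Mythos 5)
Your proposal is correct and follows essentially the same route as the paper's proof: the same reduction to the decompositions $p_F=(\nearrow)\circ\ell(p_F)\circ(\searrow)\circ r(p_F)$ and $p_F'=\ell(p_F)\circ(\nearrow)\circ r(p_F)\circ(\searrow)$ via \eqref{eq:pF'} and \eqref{eq:FS-relation}, followed by the translation through $\psi$ using the observation that $\ell(p_F)$ and $r(p_F)$ are balanced blocks whose associated trees have active vertex equal to the root. The only difference is presentational: where the paper concludes by pointing to Figure~\ref{fig:new-paths}, you spell out the forest-level bookkeeping (forests $A$ and $B$, the identification $v_1\leftrightarrow r'$, $r\leftrightarrow w$) explicitly, which is exactly the content the paper leaves implicit.
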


\begin{proof}
Fix a cycle $C$ in $\cC_{2n+1}$ and recall from the definition in \eqref{eq:2-factor} that $C$ contains at least one vertex of the form $F(P)\circ(0)$ with $P\in\cP_{2n}(n,n+1)$. We fix another path $P'\in\cP_{2n}(n,n+1)$ such that $F(P')\circ(0)$ is the closest vertex to $F(P)\circ(0)$ of this form on $C$ when walking along the cycle in the direction of the edge $(F(P),S(P))\circ(0)$ (if $C$ contains only one vertex of this form, then we set $P':=P$). By the definition in \eqref{eq:2-factor} there is a path $\wh{P}\in \cP_{2n}(n,n+1)$ (which is not necessarily distinct from $P$ or $P'$) satisfying $f_{\alpha_{2n}}(L(\wh{P}))=L(P)$ and $f_{\alpha_{2n}}(F(\wh{P}))=F(P')$ (cf.~\eqref{eq:f-alpha-LolP-LP} and \eqref{eq:f-alpha-FolP-FP'} in the proof of Lemma~\ref{lemma:all-alpha-paths}). As in the proof of Lemma~\ref{lemma:alpha0-FL-relation}, for $\alpha_{2n}=(0,0,\ldots,0)\in\{0,1\}^{n-1}$ those relations can be simplified to show that the lattice paths $p_S:=\varphi(S(P))\in D_{2n}^{>0}(n+1)$ and $p_F':=\varphi(F(P'))\in D_{2n}^{=0}(n)$ satisfy
\begin{equation} \label{eq:pF'-pS}
  p_F'=\ell(p_S) \circ (\nearrow) \circ r(p_S) \circ (\searrow)
\end{equation}
(cf.~\eqref{eq:pF'}).
Defining $p_F:=\varphi(F(P))\in D_{2n}^{=0}(n)$ and applying the first part of Lemma~\ref{lemma:all-alpha-paths} shows that \eqref{eq:pF'-pS} can be written as
\begin{equation} \label{eq:pF'-pF}
  p_F'=\ell(p_F) \circ (\nearrow) \circ r(p_F) \circ (\searrow) \enspace. 
\end{equation}
We also know that
\begin{equation} \label{eq:pF-partition}
  p_F \eqBy{eq:ell-r-F-partition} (\nearrow)\circ \ell(p_F) \circ (\searrow) \circ r(p_F) \enspace.
\end{equation}
Note that by the definition in \eqref{eq:def-ell-r-F}, the subpaths $\ell(p_F)$ and $r(p_F)$ of $p_F$ start and end at the ordinate $y=1$ or $y=0$, respectively, and never move below this ordinate in between. It follows that for both ordered rooted trees $\psi(\ell(p_F))$ and $\psi(r(p_F))$ the active vertex equals the root vertex. Using this observation and the relations \eqref{eq:pF'-pF} and \eqref{eq:pF-partition} shows that $\psi(p_F')$ can be obtained from $\psi(p_F)$ by one rotation operation (see the bottom part of Figure~\ref{fig:new-paths}), as claimed.
\end{proof}

The next theorem shows that for the all-zero parameter sequence, the cycles in the 2-factor $\cC_{2n+1}$ are intimately related to the set $\cT_{n+1}^*$ of ordered rooted trees under the equivalence relation $\rot$. We thus obtain very precise information about the number and lengths of those cycles.

\begin{theorem} \label{thm:alpha0-cycles}
Let $n\geq 1$ and consider the parameter sequence $(\alpha_{2i})_{1\leq i\leq n}$ with $\alpha_{2i}=(0,0,\ldots,0)\in\{0,1\}^{i-1}$ for all $i=1,\ldots,n$. The 2-factor $\cC_{2n+1}$ defined in Section~\ref{sec:construction} for this parameter sequence has the following property:
There is a bijection between the cycles in $\cC_{2n+1}$ and the trees in the set $\cT_{n+1}$ such that any cycle $C\in\cC_{2n+1}$ and any tree $T\in \cT_{n+1}$ that are mapped onto each other satisfy the relation
\begin{equation} \label{eq:alpha0-cycle-length}
  e(C) = (4n+2)\cdot |[T^*]_{\rot}| \enspace,
\end{equation}
where $T^*\in \cT_{n+1}^*$ is obtained by rooting $T$ arbitrarily.

Consequently, the length of a shortest cycle in $\cC_{2n+1}$ is $2(4n+2)$ for all $n\geq 2$ and the length of a longest cycle is $2n(4n+2)$ for all $n\geq 4$.
Furthermore, the total number of cycles in the 2-factor is $|\cC_{2n+1}|=|\cT_{n+1}|$, and the number of cycles of length $2n(4n+2)$ is $|\cT_{n+1}^{\mathrm{asym}}|=(1-o(1))|\cT_{n+1}|$.
\end{theorem}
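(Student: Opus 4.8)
The plan is to combine the length formula of Theorem~\ref{thm:all-alpha-cycles} with the rotation structure of Lemma~\ref{lemma:rotation}, viewing the first vertices $F(P)\circ(0)$ on the cycles of $\cC_{2n+1}$ as ordered rooted trees via the bijection $\psi\circ\varphi$. First I would set up this correspondence: by Lemma~\ref{lemma:FSL-D-isomorphic} (and Remark~\ref{remark:use-lemma-FSL-D}) the map $P\mapsto\varphi(F(P))$ is a bijection from $\cP_{2n}(n,n+1)$ onto $D_{2n}^{=0}(n)$, and since $\psi|_{D_{2n}^{=0}(n)}$ is a bijection onto $\cT_{n+1}^*$, the composite $P\mapsto\psi(\varphi(F(P)))$ is a bijection from $\cP_{2n}(n,n+1)$ onto $\cT_{n+1}^*$. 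Because $\cC_{2n+1}$ is a $2$-factor, each vertex $F(P)\circ(0)$ lies on a unique cycle and is visited once.

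The core step is to read off a cycle $C$ in the direction of the edges $(F(P),S(P))\circ(0)$, which are all oriented consistently along $C$ by the construction in~\eqref{eq:2-factor}. Listing the first vertices met as $F(P_0)\circ(0),F(P_1)\circ(0),\ldots$, Lemma~\ref{lemma:rotation} gives that $\psi(\varphi(F(P_{i+1})))$ is obtained from $\psi(\varphi(F(P_i)))$ by exactly one rotation operation. Since the rotation operation is an invertible self-map of $\cT_{n+1}^*$ (a cyclic shift of the root along the tree), its iterates partition $\cT_{n+1}^*$ into the equivalence classes $[T^*]_{\rot}$, and the distinct trees met along $C$ form precisely one such class; hence the number of first vertices on $C$ equals $|[T^*]_{\rot}|$, and Theorem~\ref{thm:all-alpha-cycles} yields $e(C)=(4n+2)\cdot|[T^*]_{\rot}|$. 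As distinct cycles carry disjoint sets of first vertices and every element of $\cT_{n+1}^*$ arises as some $\psi(\varphi(F(P)))$, the map sending each cycle to the class of its first vertices is a bijection between the cycles of $\cC_{2n+1}$ and the classes $[T^*]_{\rot}$. Invoking the stated correspondence between these classes and plane trees turns this into the desired bijection onto $\cT_{n+1}$, proving both~\eqref{eq:alpha0-cycle-length} and $|\cC_{2n+1}|=|\cT_{n+1}|$.

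It remains to extract the numerical consequences. By definition a plane tree $T$ is asymmetric exactly when $|[T^*]_{\rot}|=2n$, so by~\eqref{eq:alpha0-cycle-length} the cycles of length $2n(4n+2)$ correspond precisely to the asymmetric trees, whose number is $|\cT_{n+1}^{\mathrm{asym}}|=(1-o(1))|\cT_{n+1}|$ by~\eqref{eq:number-asymmetric-trees}. For the extremal lengths I would use the standard dictionary $|[T^*]_{\rot}|=2n/s$, where $s$ is the order of the rotational symmetry group of the plane tree $T$; thus $|[T^*]_{\rot}|$ divides $2n$, and one has $2\le|[T^*]_{\rot}|\le 2n$ for $n\ge 2$ because $1\le s\le n$ (a plane rotation fixes the centroid, and its order divides the centroid degree, which is at most $n$; in the edge-centroid case $s\le 2\le n$). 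The maximum $2n$ is realized whenever an asymmetric tree exists, i.e.\ for $n\ge 4$ since $|\cT_{n+1}^{\mathrm{asym}}|\ge 1$ there, giving the longest cycle length $2n(4n+2)$; the minimum $2$ is realized by the star $K_{1,n}$, whose $n$-fold symmetry gives $s=n$, for every $n\ge 2$, giving the shortest cycle length $2(4n+2)$.

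I expect the main obstacle to be this final paragraph, namely justifying the identity $|[T^*]_{\rot}|=2n/s$ and pinning down the extreme symmetry orders. The bijection and the length formula follow almost mechanically once Lemma~\ref{lemma:rotation} is available, but the sharp shortest- and longest-cycle statements rely on the orbit-size/symmetry dictionary for plane trees: one needs the centroid argument to bound $s\le n$ for $n\ge 2$, together with the explicit star (to realize orbit size $2$) and the existence of asymmetric trees for $n\ge 4$ (to realize orbit size $2n$).
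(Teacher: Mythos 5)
Your proposal is correct and follows essentially the same route as the paper: the cycle--to--plane-tree correspondence via $P\mapsto\psi(\varphi(F(P)))$, the length formula from Theorem~\ref{thm:all-alpha-cycles} combined with Lemma~\ref{lemma:rotation}, and the identification of the longest cycles with asymmetric trees are all exactly the paper's argument. The only (cosmetic) difference is in the extremal class sizes: the paper simply exhibits witnesses (the star with $n$ rays for the class of size $2$, and a star with $3$ rays with one ray extended for a class of size $2n$), whereas you derive the bounds from the orbit-size/symmetry dictionary $|[T^*]_{\rot}|=2n/s$ with a centroid argument --- which is, if anything, slightly more complete, since the paper asserts minimality of the size-$2$ class without proof.
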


\begin{proof}
Fix a cycle $C\in\cC_{2n+1}$ and consider the set of paths $\cP(C):=\{P\in\cP_{2n}(n,n+1)\mid F(P)\circ(0)\in C\}$. By Lemma~\ref{lemma:rotation} the corresponding set $\cT^*(C):=\{\psi(\varphi(F(P)))\mid P\in \cP(C)\}$ forms an equivalence class of ordered rooted trees from the set $\cT_{n+1}^*$ under the rotation operation $\rot$, i.e., when embedding the trees from $\cT^*(C)$ into the plane and unmarking the root vertices these trees all represent the same plane tree $T\in\cT_{n+1}$. Put differently, we have $\cT^*(C)=[T^*]_{\rot}$, where $T^*\in\cT_{n+1}^*$ is obtained by rooting $T$ arbitrarily. We define the desired mapping by assigning to the cycle $C$ the plane tree $T$.

Using Theorem~\ref{thm:all-alpha-cycles} it follows that $e(C)=(4n+2)\cdot |\cP(C)|=(4n+2)\cdot |\cT^*(C)|=(4n+2)\cdot |[T^*]_{\rot}|$, proving the first part of the theorem.

To conclude that the above mapping between the cycles in $\cC_{2n+1}$ and the trees in the set $\cT_{n+1}$ is indeed a bijection it remains to show that \emph{all} plane trees from $\cT_{n+1}$ indeed appear as images: To see this, observe that by the definition in \eqref{eq:2-factor} for \emph{every} path $P\in\cP_{2n}(n,n+1)$, the vertex $F(P)\circ(0)$ is contained in some cycle in $\cC_{2n+1}$. As by Lemma~\ref{lemma:FSL-D-isomorphic} we have $\varphi(F(P_{2n}(n,n+1)))=D_{2n}^{=0}(n)$, it follows that for \emph{every} ordered rooted tree $T^*\in \cT_{n+1}^*$, there is a cycle $C\in\cC_{2n+1}$ such that $\cT^*(C)=[T^*]_{\rot}$.

The claims about the length of a shortest and a longest cycle in $\cC_{2n+1}$ follow immediately from this one-to-one correspondence and from \eqref{eq:alpha0-cycle-length} by observing that the smallest equivalence class $[T^*]_{\rot}$ for some $T^*\in \cT_{n+1}^*$ has exactly 2 elements for all $n\geq 2$ (for $T^*$ being a star with $n$ rays), and the largest equivalence class has exactly $2n$ elements for all $n\geq 4$ (e.g.\ for $T^*$ being the graph obtained from a star with 3 rays by extending one of the rays by a path on $n-3$ edges).

The claims about the total number of cycles in the 2-factor and the number of cycles of length $2n(4n+2)$ also follow from this one-to-one-correspondence and from \eqref{eq:number-asymmetric-trees}.
\end{proof}

\section{Computer experiments}
\label{sec:experiments}

With the help of a computer we systematically explored the effect of the parameter sequence $(\alpha_{2i})_{1\leq i\leq n}$, $\alpha_{2i}\in\{0,1\}^{i-1}$, on the number and lengths of the cycles in the 2-factor $\cC_{2n+1}$ defined in Section~\ref{sec:construction}. Our focus here is primarily on finding parameters for which $\cC_{2n+1}$ consists of a single cycle, which is a Hamiltonian cycle, or of two cycles, which can always be connected to form a Hamiltonian path in the middle layer graph $Q_{2n+1}(n,n+1)$.
As there are in total $\prod_{i=1}^n 2^{i-1}=2^{\binom{n}{2}}$ parameter sequences, searching the entire parameter space quickly becomes infeasible. Consequently, we searched the entire parameter space only for every $n\leq 7$, and for every $8\leq n\leq 14$ we searched a small fraction of it until we found 100 parameter sequences for which the 2-factor $\cC_{2n+1}$ yields a Hamiltonian cycle or path. Those experimental results are summarized in Table~\ref{tab:experiments}.

\begin{table}
\centering
\begin{tabular}{|l|l|l|l|} \hline
$n$ & \# of sequences & \# of sequences \\
    & with $|\cC_{2n+1}|=1$ & with $|\cC_{2n+1}|=2$ \\  \hline\hline
1   & 1 & 0 \\
2   & 1 & 1 \\
3   & 2 & 3 \\
4   & 6 & 12 \\
5   & 44 & 100 \\
6   & 614 & 1580 \\
7   & 0 & 113438 \\
8   & $\geq 100$ & $\geq 100$ \\
9   & $\geq 100$ & $\geq 100$ \\
10  & $\geq 100$ & $\geq 100$ \\
11  & 0          & $\geq 100$ \\
12  & $\geq 100$ & $\geq 100$ \\
13  & 0          & $\geq 100$ \\
14  & 0          & $\geq 100$  \\ \hline
\end{tabular}
\caption{Number of parameter sequences $(\alpha_{2i})_{1\leq i\leq n}$, $\alpha_{2i}\in\{0,1\}^{i-1}$, for which the 2-factor $\cC_{2n+1}$ defined in Section~\ref{sec:construction} yields a Hamiltonian cycle ($|\cC_{2n+1}|=1$) or a Hamiltonian path ($|\cC_{2n+1}|=2$) in the middle layer graph $Q_{2n+1}(n,n+1)$.}
\label{tab:experiments}
\end{table}

As the table shows, our construction indeed yields many Hamiltonian paths and cycles in $Q_{2n+1}(n,n+1)$ for $n\leq 14$.
However, as we can see from the second column, for $n\in\{7,11,13,14\}$ we did not find any 2-factor $\cC_{2n+1}$ consisting of a single cycle. The next theorem explains this phenomenon by stating an explicit expression for the parity of the number of cycles in $\cC_{2n+1}$ for all $n\geq 1$ (see \cite{heindl} for a proof).

To state the result we need the following definition:
For any $n\geq 1$ define $\beta_n=(\beta_n(1),\ldots,\beta_n(n-1))\in\{0,1\}^{n-1}$ by setting for all $i=1,\ldots,n-1$
\begin{equation} \label{eq:def-beta}
  \beta_n(i):=\begin{cases}
                1 & \text{if $\{i,n-i\}\seq \{2^k\mid k\geq 0\}$} \enspace, \\
                0 & \text{otherwise} \enspace.
              \end{cases}
\end{equation}
Note that $\beta_n$ is symmetric, that it contains at most 2 entries equal to 1, and that it is the zero vector for all $n$ that are not a sum of two powers of 2 (those are $n\in\{7,11,13,14,15,19,\ldots\}$).

\begin{theorem} \label{thm:parity}
Let $n\geq 1$ and $(\alpha_{2i})_{1\leq i\leq n}$, $\alpha_{2i}\in\{0,1\}^{i-1}$, an arbitrary parameter sequence. The number of cycles in the 2-factor $\cC_{2n+1}$ defined in Section~\ref{sec:construction} for this parameter sequence satisfies
\begin{equation} \label{eq:parity}
  |\cC_{2n+1}|\equiv \alpha_{2n}\cdot\beta_n+\mathbf{1}_{n\in \{2^k\mid k\geq 0\}}\pmod 2 \enspace,
\end{equation}
where $\beta_n$ is defined in \eqref{eq:def-beta}, $\alpha_{2n}\cdot\beta_n$ denotes the scalar product of the vectors $\alpha_{2n}$ and $\beta_n$, and $\mathbf{1}_{n\in \{2^k\mid k\geq 0\}}\in\{0,1\}$ the indicator function for $n$ being a power of 2.
\end{theorem}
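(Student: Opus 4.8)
The plan is to recast $|\cC_{2n+1}|$ as the number of cycles of a single permutation and to reduce the parity statement to a sign computation. By Theorem~\ref{thm:all-alpha-cycles} the vertices of the form $F(P)\circ(0)$, $P\in\cP_{2n}(n,n+1)$, are distributed among the cycles of $\cC_{2n+1}$, and following a cycle in the direction of the edge $(F(P),S(P))\circ(0)$ defines a permutation $\sigma$ of the set $F(\cP_{2n}(n,n+1))$ whose orbits are exactly the cycles of $\cC_{2n+1}$; thus $|\cC_{2n+1}|=c(\sigma)$, the number of cycles of $\sigma$, where fixed points correspond to cycles carrying a single such vertex. Since this set has $|D_{2n}^{=0}(n)|=C_n$ elements (Lemma~\ref{lemma:FSL-D-isomorphic}), the identity $\operatorname{sgn}(\sigma)=(-1)^{C_n-c(\sigma)}$ gives $|\cC_{2n+1}|\equiv C_n+m\pmod 2$, where $\operatorname{sgn}(\sigma)=(-1)^m$. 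By Kummer's theorem $C_n$ is odd exactly when $n+1$ is a power of $2$, so $C_n\equiv \mathbf{1}_{n+1\in\{2^k\mid k\geq 0\}}\pmod 2$, and it remains to determine $\operatorname{sgn}(\sigma)$.

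Next I would read off $\sigma$ from the proof of Theorem~\ref{thm:all-alpha-cycles}: we have $\sigma(F(P))=f_{\alpha_{2n}}(F(\wh{P}))$, where $\wh{P}\in\cP_{2n}(n,n+1)$ is determined by $f_{\alpha_{2n}}(L(\wh{P}))=L(P)$. Writing $X:=\varphi(F(\cP_{2n}(n,n+1)))=D_{2n}^{=0}(n)$ and $Y:=\varphi(L(\cP_{2n}(n,n+1)))=D_{2n}^-(n)$, and letting $\lambda\colon X\to Y$ be the bijection induced by $F(P)\mapsto L(P)$, this factorization reads $\sigma=f_{\alpha_{2n}}|_X\circ\lambda^{-1}\circ(f_{\alpha_{2n}}|_Y)^{-1}\circ\lambda$ on lattice paths. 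Transporting $f_{\alpha_{2n}}|_Y$ to $X$ along $\lambda$ exhibits $\sigma$ as the product of $f_{\alpha_{2n}}|_X$ with a conjugate of $(f_{\alpha_{2n}}|_Y)^{-1}$; since conjugation and inversion preserve sign, the dependence on $\lambda$ (hence on the internal structure of the paths) drops out, and
\[
  \operatorname{sgn}(\sigma)=\operatorname{sgn}\big(f_{\alpha_{2n}}|_{D_{2n}^{=0}(n)}\big)\cdot\operatorname{sgn}\big(f_{\alpha_{2n}}|_{D_{2n}^-(n)}\big).
\]
Both restrictions are well defined by Lemma~\ref{lemma:dyck-paths-invariant}. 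I would then use the factorization $f_{\alpha_{2n}}=\ol{\rev}\circ\pi_{\alpha_{2n}}$ from \eqref{eq:f-alpha-paths} together with the fact that $\pi_{\alpha_{2n}}=\prod_{i:\alpha_{2n}(i)=1}\tau_i$ is a commuting product of the disjoint adjacent-step swaps $\tau_i$ (which swap steps $2i$ and $2i+1$), so that the sign exponent splits into an $\ol{\rev}$-contribution independent of $\alpha_{2n}$ and a sum $\sum_i\alpha_{2n}(i)\cdot s_i$, where $(-1)^{s_i}=\operatorname{sgn}(\tau_i|_{D_{2n}^{=0}(n)})\cdot\operatorname{sgn}(\tau_i|_{D_{2n}^-(n)})$.

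The remaining, and hardest, task is the explicit parity bookkeeping. Each $\tau_i$ is an involution on each of $D_{2n}^{=0}(n)$ and $D_{2n}^-(n)$, so $s_i$ is half the number of moved paths in the first set plus half the number of moved paths in the second, taken modulo $2$, a path being moved precisely when its steps $2i,2i+1$ differ. Counting such paths factors as a product of ballot-type numbers (for the portions before abscissa $2i$ and after abscissa $2i+1$), and I would evaluate these counts modulo $4$ by Lucas's and Kummer's theorems to control the binary carries. The claim to extract is $s_i\equiv\beta_n(i)\pmod 2$, i.e. $s_i=1$ exactly when both $i$ and $n-i$ are powers of $2$, which is precisely \eqref{eq:def-beta}; the symmetry of this condition reflects that $\ol{\rev}$ interchanges the roles of abscissas $2i$ and $2(n-i)$. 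An analogous fixed-point count for the involution $\ol{\rev}$ on the two sets, combined with the term $C_n$, must collapse to $\mathbf{1}_{n\in\{2^k\mid k\geq 0\}}$; a useful consistency check is the all-zero case, where $\pi_{\alpha_{2n}}=\mathrm{id}$ forces $|\cC_{2n+1}|=|\cT_{n+1}|\equiv\mathbf{1}_{n\in\{2^k\}}$ by Theorem~\ref{thm:alpha0-cycles}. Adding the $\ol{\rev}$-part to $\sum_i\alpha_{2n}(i)\,s_i=\alpha_{2n}\cdot\beta_n$ then yields \eqref{eq:parity}. I expect the binomial-coefficient parity estimates, and matching their carry patterns exactly to $\beta_n$, to be the main obstacle.
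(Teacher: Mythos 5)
Your proposal cannot be checked against the paper's own argument, because the paper states Theorem~\ref{thm:parity} \emph{without} proof and defers to the reference \cite{heindl}; so I judge it on its merits. The structural reduction you set up is correct. The permutation $\sigma$ on the $C_n$ vertices $F(\cP_{2n}(n,n+1))$ is well defined (the consistent orientation of the edges $(F(P),S(P))\circ(0)$ on every cycle is guaranteed in Section~\ref{sec:construction}), its orbits are exactly the cycles of $\cC_{2n+1}$, and the factorization $\sigma=f_{\alpha_{2n}}|_X\circ\lambda^{-1}\circ(f_{\alpha_{2n}}|_Y)^{-1}\circ\lambda$ with $X=D_{2n}^{=0}(n)$, $Y=D_{2n}^-(n)$ does follow from the proof of Theorem~\ref{thm:all-alpha-cycles} together with Lemma~\ref{lemma:FSL-D-isomorphic}. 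Since sign is invariant under conjugation and inversion, the unknown bijection $\lambda$ (and with it all dependence on $\alpha_2,\ldots,\alpha_{2n-2}$) indeed drops out; and since $\ol{\rev}$ and each adjacent swap $\tau_i$ separately map $X$ and $Y$ onto themselves (this is exactly what the proof of Lemma~\ref{lemma:dyck-paths-invariant} shows), multiplicativity of the sign gives $|\cC_{2n+1}|\equiv C_n+r+\sum_i\alpha_{2n}(i)\,s_i\pmod 2$, where $r$ is the $\ol{\rev}$-contribution and $(-1)^{s_i}=\operatorname{sgn}(\tau_i|_X)\cdot\operatorname{sgn}(\tau_i|_Y)$.

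The genuine gap is that the proposal stops exactly where the specific content of the theorem begins: the two claims $s_i\equiv\beta_n(i)\pmod 2$ and $C_n+r\equiv\mathbf{1}_{n\in\{2^k\mid k\geq 0\}}\pmod 2$ are announced as targets but not proved, and without them nothing about the peculiar form of $\beta_n$ or the power-of-two indicator has been established. The gap is fillable, and for the first claim more easily than your mod-$4$ Lucas/Kummer plan suggests: $s_i$ (mod $2$) equals the number of paths in $X$, plus the number in $Y$, having an upstep at position $2i$ and a downstep at position $2i+1$ (each $2$-cycle of the involution $\tau_i$ contains exactly one such path). Deleting this up--down pair is a bijection onto $D_{2n-2}^{=0}(n-1)$ in the case of $X$, giving the count $C_{n-1}$, while for $Y$ it is a bijection onto those paths of $D_{2n-2}^-(n-1)$ whose point at height $-1$ does \emph{not} lie at abscissa $2i-1$, giving $C_{n-1}-C_{i-1}C_{n-1-i}$; hence $s_i\equiv C_{i-1}C_{n-1-i}\pmod 2$, which is odd precisely when $i$ and $n-i$ are both powers of two, i.e.\ $s_i\equiv\beta_n(i)$. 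For the constant term, your ``consistency check'' via Theorem~\ref{thm:alpha0-cycles} is not yet a proof: it only trades the claim for the equally unproven fact that $|\cT_{n+1}|$ is odd exactly when $n$ is a power of two. Alternatively, counting the mirror-symmetric paths gives $r=\big(C_n-\tbinom{n}{\lfloor n/2\rfloor}\big)/2+\big(C_n-C_{\frac{n-1}{2}}\cdot\mathbf{1}_{n\in 2\mathbb{Z}+1}\big)/2$, so the second claim becomes $\tbinom{n}{\lfloor n/2\rfloor}+C_{\frac{n-1}{2}}\cdot\mathbf{1}_{n\in 2\mathbb{Z}+1}\equiv 2\cdot\mathbf{1}_{n\in\{2^k\mid k\geq 0\}}\pmod 4$; this mod-$4$ congruence is where binary-carry analysis genuinely is needed, and it remains to be carried out.
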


By Theorem~\ref{thm:parity} the parity of $|\cC_{2n+1}|$ is controlled by only very few bits from the parameter vector $\alpha_{2n}$ (which is used in the last step of the construction of $\cC_{2n+1}$, cf.\ \eqref{eq:f-alpha} and \eqref{eq:2-factor}). In particular, if $\beta_n$ is the zero vector, the number of cycles in $\cC_{2n+1}$ is even regardless of the choice of the parameter sequence, which explains the zeros in the second column of Table~\ref{tab:experiments}.

The term $\mathbf{1}_{n\in \{2^k\mid k\geq 0\}}$ in \eqref{eq:parity} originates from the parity of the number of plane trees on $n+1$ vertices $|\cT_{n+1}|$, as for the all-zero parameter sequence we have $|\cC_{2n+1}|=|\cT_{n+1}|$ (recall Theorem~\ref{thm:alpha0-cycles} and \eqref{eq:plane-trees}).

\bibliographystyle{alpha}
\bibliography{refs}

\end{document}